\declaretheorem[numberwithin=section]{lemma}
\declaretheorem[sibling=lemma]{theorem}
\declaretheorem[sibling=lemma]{problem}
\declaretheorem[sibling=lemma]{corollary}
\declaretheorem[sibling=lemma]{proposition}
\declaretheorem[sibling=lemma,style=remark]{example}
\declaretheorem[sibling=lemma,style=remark]{definition}
\newcommand{\A}{{\mathcal{A}}}
\newcommand{\E}{{\mathbb{E}}}
\newcommand{\C}{{\mathbb{C}}}
\newcommand{\R}{{\mathbb{R}}}
\newcommand{\Z}{{\mathbb{Z}}}
\newcommand{\uu}{{\mathfrak{u}}}
\newcommand{\gl}{{\mathfrak{gl}}}
\newcommand{\U}{\mathcal{U}}
\renewcommand{\P}{\mathbb{P}}
\newcommand{\V}{\mathbf{V}}
\newcommand{\W}{\mathbf{W}}
\newcommand{\Classical}{\operatorname{Classical}}
\newcommand{\Quantum}{\operatorname{Quantum}}
\newcommand{\p}{\overline{\wp}}
\newcommand{\N}{{\mathbb{N}}}
\newcommand{\gwia}{^{\star}}
\newcommand{\mult}{\operatorname{mult}}
\newcommand{\Sy}[1]{\mathfrak{S}({#1})}
\DeclareMathOperator{\GL}{GL}
\DeclareMathOperator{\Tr}{Tr}
\DeclareMathOperator{\End}{End}
\DeclareMathOperator{\id}{id} 
\DeclareMathOperator{\genus}{genus} 
\DeclareMathOperator{\defect}{defect} 
\DeclareMathOperator{\aex}{aex} 
\DeclareMathOperator{\cyc}{cyc}
\DeclareMathOperator{\Wg}{Wg}
\newcommand{\group}[1]{\mathrm{#1}}
\newcommand{\algebra}[1]{\mathcal{#1}}
\newcommand{\tr}{\operatorname{tr}}
\renewcommand{\i}{\mathbf{i}}
\newcommand{\Mat}{\operatorname{Mat}}
\newcommand{\ourn}{N}
\author {Beno\^\i{}t Collins}
\address{Department of Mathematics, Graduate School of Science,
Kyoto University, Kyoto 606-8502, Japan
\newline \indent 
CNRS, 
France}\email{collins@math.kyoto-u.ac.jp}
\author{Jonathan Novak}
\address{Department of Mathematics, UC San Diego, 
9500 Gilman Drive, La Jolla CA 92093-0112, USA}
\email{jinovak@ucsd.edu}
\author{Piotr \'Sniady}
\address{
Institute of Mathematics, Polish Academy of Sciences,
\mbox{ul.~\'Sniadec\-kich 8,} \linebreak 00-956 Warszawa, Poland
} 
\email{psniady@impan.pl}
\title[Asymptotics of Quantum Random Matrices]{Semiclassical Asymptotics \\ of  
$\mathbf{GL}_N(\C)$ Tensor Products \\ and Quantum Random Matrices}
\keywords{Asymptotic representation theory, random matrix theory, free probability, 
representations of general linear groups, quantization}
\subjclass[2010]{
22E46  
(Primary)
60B20, 
46L54, 
34L20  
(Secondary)
}
\begin{document}

\begin{abstract}
The \emph{Littlewood--Richardson process} is a discrete random point process 
arising from the isotypic decomposition of tensor products
of irreducible representations of the linear group $\group{GL}_N(\C)$.  
\emph{Biane--Perelomov--Popov matrices} are quantum random matrices obtained
as the geometric quantization of random Hermitian matrices with deterministic eigenvalues 
and uniformly random eigenvectors.
As first observed by Biane, the correlation functions of
certain global observables of the LR process coincide with the correlation functions of 
linear statistics of sums of classically independent BPP matrices, thereby enabling
a random matrix approach to the statistical study of $\group{GL}_N(\C)$ tensor products.
In this paper, we prove an optimal result: classically independent BPP matrices
become freely independent in any semiclassical/large-dimension limit.  
This proves and generalizes a conjecture of Bufetov and Gorin, and leads
to a Law of Large Numbers for the BPP observables of the LR
process which holds in any and all semiclassical scalings.
\end{abstract}

\maketitle

\hfill \begin{minipage}{0.6\textwidth}
\emph{
To Philippe Biane,
for his 55th birthday.
}
\end{minipage}

\newcommand{\aaa}{\mathfrak{a}}
\newcommand{\bbb}{\mathfrak{b}}

\newcommand{\lattice}{\hbar_N \Z}

\section{Introduction}

\subsection{The Littlewood--Richardson process}
Rational representations of the complex general linear group, $\GL_N(\C)$, 
were classified by Schur more than a century ago, see e.g.~Weyl's classic book \cite{Weyl}.  
This classification may be stated as follows: irreducible 
representations are parametrized, 
up to isomorphsim, by configurations of $N$ hard particles on the 
one-dimensional lattice $\lattice$.  Here $\hbar_N > 0$ is an arbitrary lattice constant specifying
the regular spacing between adjacent sites.  Once Schur's classification is known, 
one may ask which particle configurations occur,
and with what multiplicity, as the signature of an irreducible component of a representation
constructed from irreducibles by means of standard operations.  In this paper, we focus on 
tensor products.  

Given a sequence
	\begin{equation}
	\label{eqn:PlanckSequence}
		\hbar_1,\hbar_2,\hbar_3,\dots 
	\end{equation} 
of positive real numbers, and two triangular arrays
	\begin{equation}
		\label{eqn:TriangularArrays}
		\begin{matrix}
				a_1^{(1)} & {} & {} & {} \\
				a_1^{(2)} & a_2^{(2)} & {} & {} \\
				a_1^{(3)} & a_2^{(3)} & a_3^{(3)} & {} \\
				\vdots & \vdots & \vdots & \ddots
			\end{matrix}	
			\quad\text{ and }\quad
			\begin{matrix}
				b_1^{(1)} & {} & {} & {} \\
				b_1^{(2)} & b_2^{(2)} & {} & {} \\
				b_1^{(3)} & b_2^{(3)} & b_3^{(3)} & {} \\
				\vdots & \vdots & \vdots & \ddots
			\end{matrix}
	\end{equation} 
such that, for each $N \in \N^*$,
	\begin{equation}
	\label{eqn:Eigenvalues}
		a_1^{(N)} > \dots > a_N^{(N)} \quad\text{ and }\quad
		b_1^{(N)} > \dots > b_N^{(N)}
	\end{equation}
specify a pair of particle configurations on $\lattice$,
let $\V_N$ and $\W_N$ be the corresponding irreducible representations
of $\GL_N(\C)$, and let 
	\begin{equation}
		\label{eqn:IsotypicDecomposition}
		\V_N \otimes \W_N = \bigoplus_{\{c_1 > \dots > c_N \} \subset 
		\lattice} \mult_N(c_1,\dots,c_N)\ \mathbf{X}^{(c_1,\dots,c_N)}
	\end{equation}
be the isotypic decomposition of the representation $\V_N \otimes \W_N$.
The multiplicities arising in this decomposition are known as \emph{Littlewood--Richardson
coefficients}.   

The decision problem 
	\begin{equation}
		\label{eqn:DecisionProblem}
		\mult_N(c_1,\dots,c_N) \stackrel{?}{>} 0
	\end{equation}
is a quantum analogue of the famous Horn problem, which asks for a characterization
of the possible spectra of the sum of two Hermitian matrices with given eigenvalues.
It is a landmark theorem of Knutson and Tao --- formerly known as the 
\emph{Saturation Conjecture} --- that the quantum and classical Horn problems 
are equivalent (see \cite{KnutsonTao} for a precise statement).  
This implies that \eqref{eqn:DecisionProblem} can
be decided in polynomial time, and a polynomial time decision algorithm has been 
given by B\"urgisser and Ikenmeyer, see \cite{BurgisserIkenmeyer2013} and references therein.  
The actual computation of 
Littlewood--Richardson coefficients is an a priori harder problem, which has been shown
to be $\#P$-complete by Narayanan \cite{Narayanan}.  Assuming 
$P\neq NP$, this rules out the existence of explicit formulas for Littlewood--Richardson
coefficients.

In lieu of satisfactory exact formulas, one may pursue 
a statistical understanding of irreducible subrepresentations of 
$\group{GL}_N(\C)$ tensor products.
More precisely, the data \eqref{eqn:TriangularArrays} 
determines a natural sequence of probability measures $\P_N$ on particle configurations 
$\{c_1>\dots>c_N\}\subset \lattice$ obtained by trading the isotypic decomposition
\eqref{eqn:IsotypicDecomposition} for the isotypic measure
	\begin{equation}
	\label{eqn:IsotypicMeasure}
		\P_N\{c_1>\dots>c_N\}\ = 
		\frac{\mult_N(c_1,\dots,c_N) \dim \mathbf{X}^{(c_1,\dots,c_N)}}{\dim\V_N \otimes \W_N}.
	\end{equation}
One may now ask about statistical features of the \emph{Littlewood--Richardson
process}, i.e.~the random point process
	\begin{equation}
	\label{eqn:DiscreteParticleEnsemble}
		c_1^{(N)} > \dots > c_N^{(N)}
	\end{equation}
on $\lattice$ whose law is $\P_N$.
This line of investigation was opened twenty years ago by Biane \cite{Biane95},
who was among the first to realize its intimate connection with random matrix theory.  

\subsection{Random matrices and asymptotic freeness}
Biane suggested that the Littlewood--Richardson process
\eqref{eqn:DiscreteParticleEnsemble} should be viewed as quantizing the continuous 
random point process
	\begin{equation}
	\label{eqn:ContinuousParticleEnsemble}
		z_1^{(N)} \geq \dots \geq z_N^{(N)}
	\end{equation}	
of eigenvalues of the random Hermitian matrix $Z_N=X_N+Y_N$
whose summands $X_N,Y_N$ are independent, uniformly 
random $N \times N$ Hermitian matrices with eigenvalues given 
by the configurations \eqref{eqn:Eigenvalues}. 
The continuum limit of the eigenvalue ensemble \eqref{eqn:ContinuousParticleEnsemble}
is well-known to be described by Voiculescu's Free Probability Theory \cite{Voiculescu1991},
as we now recall.\footnote{In the interest of brevity, we assume basic familiarity with 
Free Probability.  See Appendix \ref{app:FreeProb} for the fundamental definitions and pointers to
the literature.}

Consider the global observables of the point process $\{z_i^{(N)}\}$ defined by
	\begin{equation*}
		\overline{p}_k^{(N)} := \overline{p}_k(z_1^{(N)},\dots,z_N^{(N)}), \quad k \in \N^*,
	\end{equation*}
where 
	\begin{equation}
		\label{eqn:NewtonPowerSums}
		\overline{p}_k(x_1,\dots,x_N) = \frac{1}{N}(x_1^k+\dots +x_N^k)
	\end{equation} 
is the (normalized) Newton power sum symmetric polynomial of degree $k$ in $N$ variables.
These \emph{``Newton observables''} are nothing but the moments
of the empirical distribution of $\{z_i^{(N)}\}$. 
Clearly, one has 
	\begin{equation}
		\label{eqn:ClassicalCorrelators}
		\langle \overline{p}_{k_1}^{(N)} \cdots \overline{p}_{k_r}^{(N)} \rangle
		= \E\left[\tr(Z_N^{k_1}) \cdots \tr(Z_N^{k_r})\right],
	\end{equation}
where $\tr=N^{-1}\Tr$ is the normalized matrix trace,
$\langle \cdot \rangle$ denotes expectation with respect to the law \eqref{eqn:IsotypicMeasure} 
of the process \eqref{eqn:ContinuousParticleEnsemble}, and $\E$ denotes
expectation with respect to the law of the random matrix $Z_N$. 
Although obvious, the formula \eqref{eqn:ClassicalCorrelators} 
is very useful: it allows one to analyze correlation functions of Newton observables
by leveraging the independence of the matrix elements of $X_N$ and $Y_N$.  
This is a version of the \emph{moment method}, a ubiquitous and powerful technique 
in random matrix theory, see e.g. \cite{AGZ,NicaSpeicher_book,Novak}.

Starting with $1$-point functions, one has  
	\begin{equation*}
		\langle \overline{p}_k^{(N)} \rangle = \sum_{W} \E\tr W,
	\end{equation*}	
where the sum is over all words $W$ of length $k$ in the letters $X_N,Y_N$.
The independence of the matrix elements of $X_N,Y_N$ can be harnessed to
effectively characterize the $N \to \infty$ asymptotics of the expected trace of
each such $W$  --- what appears in the large $N$ limit is free independence. 

	\begin{theorem}[Voiculescu \cite{Voiculescu1991}]
	\label{thm:Voiculescu}
		Suppose the sequence \eqref{eqn:PlanckSequence} and the data
		\eqref{eqn:TriangularArrays} is such that the limits
			\begin{equation*}
			x_k = \lim_{N \to \infty} \overline{p}_k(a_1^{(N)},\dots,a_N^{(N)})
			\quad\text{ and }\quad
			y_k = \lim_{N \to \infty} \overline{p}_k(b_1^{(N)},\dots,b_N^{(N)})
			\end{equation*}
		exist for each $k \in \N^*$.  Then, for any fixed $d \in \N^*$ and 
		$p,q \colon [d] \to \N$,
			\begin{equation*}
				\lim_{N \to \infty} \E\tr (X_N^{p(1)}Y_N^{q(1)} \cdots 
				X_N^{p(d)}Y_N^{q(d)}) =
				\tau(X^{p(1)}Y^{q(1)} \cdots X^{p(d)}Y^{q(d)}),
			\end{equation*}
		where $X,Y$ are free random variables in a tracial noncommutative
		probability space $(\algebra{A},\tau)$ with moment sequences
		 $(x_k)_{k=1}^{\infty}$ and $(y_k)_{k=1}^{\infty}$, respectively.
		\end{theorem}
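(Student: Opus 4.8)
The plan is to trade the two independent random matrices for a single Haar-distributed unitary and then evaluate the resulting moment by the unitary Weingarten calculus. Since $X_N$ has the law of $U_N A_N U_N^{*}$ and $Y_N$ that of $V_N B_N V_N^{*}$, with $A_N,B_N$ the diagonal matrices carrying the prescribed eigenvalues and $U_N,V_N$ independent and Haar-distributed on $\group{U}(N)$, the bi-unitary invariance of the trace lets me absorb $V_N$ and rewrite
\[
\E\tr(X_N^{p(1)}Y_N^{q(1)}\cdots X_N^{p(d)}Y_N^{q(d)})
= \E_U\,\tr\bigl(U\tilde A_1 U^{*}\tilde B_1\cdots U\tilde A_d U^{*}\tilde B_d\bigr),
\]
where $U$ is Haar, $\tilde A_i := A_N^{p(i)}$ and $\tilde B_i := B_N^{q(i)}$ are diagonal, and $\tr$ is normalized. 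The relevant ``letter data'' converge: $\tr(\tilde A_{i_1}\cdots\tilde A_{i_m}) = \overline p_{p(i_1)+\dots+p(i_m)}(a^{(N)}) \to x_{p(i_1)+\dots+p(i_m)}$, and similarly for the $\tilde B$'s.

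Next I would expand the normalized trace in matrix coordinates and apply the Weingarten formula for the Haar average of a degree-$(d,d)$ monomial in the entries of $U$: it is a sum over pairs $(\alpha,\beta)\in\Sy{d}\times\Sy{d}$ of $\Wg(\alpha\beta^{-1},N)$ against a product of Kronecker deltas identifying the upper indices through $\alpha$ and the lower indices through $\beta$. Performing the remaining free index summations --- here diagonality of the $\tilde A_i$ and $\tilde B_i$ is exactly what makes everything collapse --- turns each summand into $N^{e(\alpha,\beta)}$ times a product of normalized traces of products of the $\tilde A$'s, grouped by the cycles of one permutation built from $\beta$, and of the $\tilde B$'s, grouped by the cycles of a permutation built from $\alpha$ and the cyclic shift $\gamma=(1\,2\,\cdots\,d)$ of the trace, where $e(\alpha,\beta)$ is an explicit affine function of the relevant cycle counts and of $|\alpha\beta^{-1}|$.

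The crux is the asymptotic analysis of this double sum. Inserting the genus expansion $\Wg(\pi,N)=N^{-d-|\pi|}\bigl(\Moeb(\pi)+O(N^{-2})\bigr)$ and running the standard Euler-characteristic inequality for the Cayley (word-length) metric on $\Sy{d}$, I would show that $e(\alpha,\beta)\le 0$ for every pair, so all the moments stay bounded, and that equality holds precisely for the ``planar'' pairs: those for which $\beta$ is non-crossing with respect to the cyclic order of the trace and $\alpha$ is the associated Kreweras-type complement. On this set $\Moeb(\alpha\beta^{-1})$ factorizes over blocks into signed Catalan numbers, and summing the surviving terms reorganizes the limit into $\sum_{\pi\in\NC(d)}\kappa_\pi$ evaluated on the alternating word $(\tilde A_1,\tilde B_1,\dots,\tilde A_d,\tilde B_d)$, where $\kappa$ is the free-cumulant functional of two free variables with the prescribed single-variable moments --- and crucially no surviving block ever straddles an $\tilde A$-entry and a $\tilde B$-entry. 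I expect this bookkeeping --- carrying the two permutation sums and the two families of index summations simultaneously, and nailing down the equality case of the Euler-characteristic estimate --- to be the only real obstacle; the remaining steps are formal.

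Finally, by the non-crossing moment--cumulant formula together with the characterization of freeness as the vanishing of mixed free cumulants, the planar sum produced above is exactly $\tau(X^{p(1)}Y^{q(1)}\cdots X^{p(d)}Y^{q(d)})$ for $X,Y$ free with moment sequences $(x_k)$ and $(y_k)$, which is the assertion. As a sanity check, the one-point case $d=1$ uses only $\Wg((1),N)=1/N$ and yields $\E\tr(X_N^{p}Y_N^{q})\to x_p\,y_q$, i.e.\ first-order asymptotic freeness.
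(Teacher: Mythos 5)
The paper does not supply its own proof of this theorem --- it is cited as a known result of Voiculescu, and the authors' contribution is the quantum analogue, \cref{thm:Main}. That said, the machinery the paper develops to prove \cref{thm:Main} (unitary invariance, the Weingarten formula, the genus-zero identification with free probability) is designed to specialize back to exactly the statement you are proving, and your proposal lands squarely in that territory. So the route you take is essentially the same one the paper travels.

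The one substantive difference is the choice of asymptotic expansion for $\Wg_N$: you use the M\"obius/Euler-characteristic formulation $\Wg(\pi,N) = N^{-d-|\pi|}\big(\Moeb(\pi) + O(N^{-2})\big)$ together with noncrossing partitions and Kreweras complements to identify the planar terms, whereas the paper invokes the monotone-walk expansion \eqref{eqn:WeingartenReduced} of Novak and Matsumoto--Novak, in which $\Wg_N(\pi_1,\pi_2)$ is a generating function for monotone walks on the Cayley graph and the leading coefficient $\vec{W}_0(\pi_1,\pi_2)$ counts monotone geodesics. These are equivalent packagings of the same combinatorics. Your version buys more contact with the classical noncrossing-partition picture and the free-cumulant calculus; the paper's version buys a genuinely convergent series for $N\geq d$ (rather than an asymptotic one) and a uniform framework that carries over verbatim to the quantum setting, which is why they use it for \cref{thm:Main}.

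Two small slips to watch for in your sketch. First, when you reorganize the limit as $\sum_{\pi\in\NC(d)}\kappa_\pi$ on the word $(\tilde A_1,\tilde B_1,\dots,\tilde A_d,\tilde B_d)$, the word has $2d$ letters, so the sum should run over $\NC(2d)$ (or, in the paper's language, over pairs $(\pi_1,\pi_2)\in\Sy{d}^2$ with $\genus(\pi_1,\pi_1^{-1}\pi_2,\pi_2^{-1}\gamma)=0$, which is the same set after a standard translation). Second, the planarity condition you state --- $\beta$ noncrossing and $\alpha$ its Kreweras complement --- should be phrased as the \emph{triple-geodesic} condition $|\pi_1|+|\pi_1^{-1}\pi_2|+|\pi_2^{-1}\gamma|=|\gamma|$, i.e.\ $\id\leq\pi_1\leq\pi_2\leq\gamma$ in the absolute order; the Kreweras picture is one way to parametrize those chains but the condition involves both permutations symmetrically, not just $\alpha$ determined from $\beta$. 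Neither slip is fatal, and the rest of the bookkeeping you flag as the ``only real obstacle'' is indeed where the work lies; it is carried out in full in Collins and Collins--\'Sniady \cite{Collins2003,CollinsSniady2004}, which is exactly what the paper leans on.
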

	
Let us make some remarks concerning Theorem \ref{thm:Voiculescu}.  First, the hypothesis
that the limits $x_k$ and $y_k$ exist forces $\hbar_N = O(N^{-1})$ as 
$N \to \infty$ ---
in order for the empirical distributions of the particle systems \eqref{eqn:TriangularArrays}
to converge, the lattice spacing \eqref{eqn:PlanckSequence} must decay at least as fast as 
the number of particles grows.  Second, Theorem \ref{thm:Voiculescu} implies that 
	\begin{equation*}
		\lim_{N \to \infty} \langle \overline{p}_k^{(N)} \rangle = v_k
	\end{equation*}
for each $k \in \N^*$, where the sequence $(v_k)_{k=1}^{\infty}$ is the (additive)
free convolution of the sequences $(x_k)_{k=1}^{\infty}$ and $(y_k)_{k=1}^{\infty}$. 
Third, via the decomposition 
	\begin{equation*}
		\langle \overline{p}_{k_1}^{(N)}\overline{p}_{k_2}^{(N)} \rangle
		=\sum_{W_1,W_2} \E[\tr W_1 \tr W_2],
	\end{equation*}
where the sum is over pairs of words $W_1,W_2$ in $X_N,Y_N$ of lengths $k_1,k_2$,
respectively, one can further leverage the independence of $X_N,Y_N$ to estimate 
$2$-point functions of Newton observables in the large $N$ limit and hence
demonstrate concentration of $\overline{p}_k^{(N)}$.  In this way one obtains the following 
Law of Large Numbers for the the Newton observables of the eigenvalue ensemble 
\eqref{eqn:ContinuousParticleEnsemble}.

	\begin{theorem}[Voiculescu \cite{Voiculescu1991}]
	\label{thm:ClassicalLLN}
		Under the assumptions of Theorem \ref{thm:Voiculescu}, for each 
		$k \in \N^*$ we have $\overline{p}_k(z_1^{(N)},\dots,z_N^{(N)}) \to v_k$ 
		in probability.
	\end{theorem}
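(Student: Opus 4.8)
The plan is to upgrade the one–point convergence of Theorem~\ref{thm:Voiculescu} to convergence in probability by a standard second–moment argument. Concretely, for any $\epsilon > 0$, Chebyshev's inequality gives
\begin{equation*}
\P\bigl(\bigl|\overline{p}_k^{(N)} - \langle \overline{p}_k^{(N)}\rangle\bigr| > \epsilon\bigr) \;\leq\; \frac{\Var\bigl(\overline{p}_k^{(N)}\bigr)}{\epsilon^2},
\end{equation*}
so it suffices to show that $\langle \overline{p}_k^{(N)}\rangle \to v_k$ and that $\Var(\overline{p}_k^{(N)}) \to 0$. The first of these was already recorded in the remarks following Theorem~\ref{thm:Voiculescu} (expand $Z_N^k = (X_N + Y_N)^k$ into words, pass each $\E\tr W$ to its limit $\tau(W)$, and sum). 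Thus the entire task is the variance estimate.

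Next I would expand the variance over words. Using \eqref{eqn:ClassicalCorrelators} with $r = 2$ and $k_1 = k_2 = k$, and writing $Z_N = X_N + Y_N$,
\begin{equation*}
\Var\bigl(\overline{p}_k^{(N)}\bigr) \;=\; \E\bigl[\tr(Z_N^k)\,\tr(Z_N^k)\bigr] - \bigl(\E\tr(Z_N^k)\bigr)^2 \;=\; \sum_{W_1, W_2} \Cov\bigl(\tr W_1,\, \tr W_2\bigr),
\end{equation*}
where the sum runs over the at most $4^k$ pairs of words $W_1, W_2$ of length $k$ in the letters $X_N, Y_N$. Since the number of terms is bounded independently of $N$, it suffices to prove that each covariance $\Cov(\tr W_1, \tr W_2)$ tends to $0$; in fact one expects the sharper bound $O(N^{-2})$.

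To prove this covariance estimate I would pass to the unitary model: a uniformly random Hermitian matrix with prescribed spectrum is $U_N A_N U_N^{*}$ with $A_N = \diag(a_1^{(N)},\dots,a_N^{(N)})$ and $U_N$ Haar–distributed on $\group{U}(N)$, so that $X_N = U_N A_N U_N^{*}$ and $Y_N = V_N B_N V_N^{*}$ with $U_N, V_N$ independent and Haar. Then $\E[\tr W]$ and $\E[\tr W_1\,\tr W_2]$ are computed by integrating monomials in the entries of $U_N, \overline{U_N}, V_N, \overline{V_N}$ against Haar measure, and the Weingarten calculus evaluates these as finite sums, indexed by pairs of permutations, of products of values of the unitary Weingarten function $\Wg$ with traces $\Tr(A_N^{c}) = N\,\overline{p}_c(a^{(N)})$ and $\Tr(B_N^{c}) = N\,\overline{p}_c(b^{(N)})$ for $c \leq k$. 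In particular $\E[\tr W]$ is a polynomial in the finitely many power sums $\overline{p}_c(a^{(N)}), \overline{p}_c(b^{(N)})$, $1 \le c \le k$, with coefficients that are rational in $N$ — this is why only the convergence of these power sums assumed in Theorem~\ref{thm:Voiculescu}, and no uniform bound on the eigenvalues, is needed. The known large-$N$ behaviour of $\Wg$ then shows that the leading contribution to $\E[\tr W_1\,\tr W_2]$ is the disconnected one, namely $\E[\tr W_1]\,\E[\tr W_2]$, while every permutation that couples the two cycles arising from $W_1$ and $W_2$ carries an extra factor $N^{-2}$. Hence $\Cov(\tr W_1, \tr W_2) = O(N^{-2})$, and therefore $\Var(\overline{p}_k^{(N)}) = O(N^{-2}) \to 0$, which completes the proof.

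The hard part is the last step: making the ``disconnected leading term plus $O(N^{-2})$ correction'' statement precise and uniform over the $4^k$ words, i.e.\ tracking in the Weingarten expansion the genus-type power of $N$ attached to each pair of permutations and checking that any diagram entangling the cycle structures of $W_1$ and $W_2$ loses two powers of $N$ relative to the main term. (A softer route to the same variance bound is to note that $(U_N, V_N) \mapsto \overline{p}_k^{(N)}$ is $O(N^{-1})$-Lipschitz in the Hilbert--Schmidt metric and to invoke concentration of measure on $\group{U}(N)\times\group{U}(N)$, giving $\Var(\overline{p}_k^{(N)}) = O(N^{-3})$ once the spectra are uniformly bounded.)
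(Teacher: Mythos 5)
Your argument is correct and follows essentially the same route the paper indicates: the theorem is attributed to Voiculescu and the paper only sketches its proof (expand $\langle \overline{p}_{k_1}^{(N)}\overline{p}_{k_2}^{(N)} \rangle$ over pairs of words in $X_N,Y_N$, leverage their independence to estimate the $2$-point function, and deduce concentration), which is precisely your Chebyshev-plus-Weingarten variance bound. Your main line of reasoning correctly avoids any uniform bound on the spectra, using only the assumed convergence of power sums, as the hypotheses require; just note that your parenthetical ``softer route'' via Lipschitz concentration on $\group{U}(N)\times\group{U}(N)$ does need uniformly bounded spectra and so would not cover the stated generality, though you did flag this.
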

	
\subsection{Biane--Perelomov--Popov quantization}
In \cite{Biane95}, Biane made the remarkable observation that an analogue of the key 
formula \eqref{eqn:ClassicalCorrelators} holds for the LR process  provided one replaces 
both the Newton observables $\overline{p}_k^{(N)}$ and the random matrices 
$X_N,Y_N$ with their quantum counterparts.  This allows one to study the LR process using
techniques analogous to those used in random matrix theory.  

Let us describe Biane's fundamental insight in more detail.  
The first  step is to understand how to quantize the classical random Hermitian matrices 
$X_N,Y_N$.  Up to minor modifications, 
the required quantization was constructed in the 1960s by 
Perelomov and Popov \cite{PerelomovPopov1967},
see also \v{Z}elobenko \cite{Zelobenko73}.  It is as follows.
For each $N \in \N^*$, introduce two $N \times N$ matrices defined by 
	\begin{align*}
		A_N :=&  \begin{bmatrix}
			{} & \vdots & {} \\
			\cdots & \rho_N(\hbar_N e_{ij}) \otimes I_{\W_N} & \cdots \\
			{} & \vdots & {} \\
		\end{bmatrix}_{1 \leq i,j \leq N}, \\
	\intertext{and}
		B_N :=&  \begin{bmatrix}
			{} & \vdots & {} \\
			\cdots & I_{\V_N} \otimes \sigma_N(\hbar_N e_{ij}) & \cdots \\
			{} & \vdots & {} \\
		\end{bmatrix}_{1 \leq i,j \leq N},
	\end{align*}
where $\{e_{ij}\}$ are the standard generators of the universal enveloping algebra
$\U(\mathfrak{gl}_N(\C))$ and $\rho_N,\sigma_N$ are the actions of $\U(\mathfrak{gl}_N(\C))$
on $\V_N$ and $\W_N$ induced by the respective linear actions of $\group{GL}_N(\C)$ on these 
vector spaces.
The matrices $A_N, B_N$ so defined are quantum random matrices in the sense that
their entries are quantum random variables living in the 
noncommutative probability space
$(\A_N,\E)$, where $\A_N$ is the algebra
	\begin{equation*}
		\A_N := \End \V_N \otimes \End \W_N
	\end{equation*}
and $\E\colon \A_N \to \C$ is the quantum expectation functional defined by
	\begin{equation*}
		\E := \tr_{\V_N} \otimes \tr_{\W_N},
	\end{equation*}
with 
	\begin{equation*}
		\tr_{\V_N}:=\frac{1}{\operatorname{dim} \V_n} \Tr_{\V_N}
		\quad\text{ and }\quad
		\tr_{\W_N}:=\frac{1}{\operatorname{dim} \W_n} \Tr_{\W_N}
	\end{equation*}
the normalized traces on $\End \V_N$ and $\End \W_N$, respectively.
We will refer to the quantum random matrices
$A_N,B_N$ as \emph{Biane--Perelomov--Popov matrices}, or BPP matrices for short.
In \cref{app:GeometricQuantization}, we present a self-contained discussion, 
in the spirit of geometric quantization, which explains why the pair $A_N,B_N$
may be viewed as a natural quantization of $X_N,Y_N$
in line with the principles of the Kirillov--Kostant orbit method.

As shown by Perelomov and Popov,
traces of powers of $A_N$ and $B_N$ are scalar operators in 
$\A_N$ --- this is the quantum analogue of the fact that the classical random 
matrices $X_N,Y_N$  have deterministic spectra.  
In fact, Perelomov and Popov showed that, for each $k \in \N^*$, one has
	\begin{align*}
		\tr(A_N^k) &= \overline{\wp}_k(a_1^{(N)},\dots,a_N^{(N)})I_{\V_N} \otimes I_{\W_N} \\
		\tr(B_N^k) &= \overline{\wp}_k(b_1^{(N)},\dots,b_N^{(N)})I_{\V_N} \otimes I_{\W_N},
	\end{align*}
where
	\begin{equation}
	\label{eqn:BPPPowerSums}
		\overline{\wp}_k(x_1,\dots,x_N)= \frac{1}{N}\sum_{i=1}^N x_i^k
		\prod_{j \neq i} \left(1 - \frac{\hbar_N}{x_i-x_j} \right) 
	\end{equation}
is a quantum deformation of the normalized Newton power sum \eqref{eqn:NewtonPowerSums}.
These deformed power sums yield the ``right'' family of global observables of the 
Littlewood--Richardson process, 
	\begin{equation*}
		\p_k^{(N)} := \overline{\wp}_k(c_1^{(N)},\dots,c_N^{(N)}), \quad k \in \N^*,
	\end{equation*}
which we will refer to as the \emph{Biane--Perelomov--Popov
observables} of the LR process,
or \emph{BPP observables} for short.  The relationship between BPP observables
of the LR process $\{c_i^{(N)}\}$ and 
BPP matrices mirrors the relationship between Newton observables
of the eigenvalue process $\{z_i^{(N)}\}$ and random Hermitian matrices:
we have
	\begin{equation}
		\label{eqn:QuantumCorrelators}
		\langle \p_{k_1}^{(N)} \cdots \p_{k_r}^{(N)} \rangle = 
		\E\left[ \tr(C_N^{k_1}) \cdots \tr(C_N^{k_r})\right],
	\end{equation}
where $\langle \cdot \rangle$ denotes expectation with respect to the law of the 
LR process, and $\E \colon \A_N \to \C$ is the quantum expectation functional 
applied to the corresponding product of normalized traces of the quantum random matrix 
$C_N=A_N+B_N$.  This is the perfect quantum analogue of \eqref{eqn:ClassicalCorrelators}.

\subsection{The semiclassical/large-dimension limit}
The existence of the formula \eqref{eqn:QuantumCorrelators} suggests the possibility of 
a moment method analysis of the BPP observables of the LR process.  
The main obstruction to implementing this idea is the extra layer of noncommutativity 
imposed by quantization: while it is true that the matrix elements of $A_N$ and $B_N$ form 
two families of classically independent quantum random variables, 
the members of these families do not commute amongst
themselves.  Instead, the matrix elements of $A_N$ and $B_N$ are 
governed by the commutation relations
	\begin{equation*}
	\begin{split}
		\label{eqn:DeformedCommutation}
		\left[(A_N)_{ij},(A_N)_{kl}\right] 
		&= \hbar_N \left(\delta_{jk}\ (A_N)_{il} - \delta_{li}\ (A_N)_{kj}\right), \\
		\left[(B_N)_{ij},(B_N)_{kl}\right] 
		&= \hbar_N \left(\delta_{jk}\ (B_N)_{il} - \delta_{li}\ (B_N)_{kj}\right),
	\end{split}
	\end{equation*}
which are inherited from the defining relations of $\U(\mathfrak{gl}_N(\C))$.
Consequently, working with mixed moments in the entries of $A_N$ and $B_N$ is 
vastly more complicated than working with mixed moments in the entries of their 
classical counterparts, $X_N$ and $Y_N$.

Despite this obstruction, 
a glance at the commutation relations \eqref{eqn:DeformedCommutation} reveals that,
if $\hbar_N$ is small, the matrix elements of each BPP matrix exhibit approximately
classical (commutative) behavior, while the pair $A_N,B_N$
retains its quantum (noncommutative) aspect --- an instance of the 
\emph{semiclassical limit}.  Moreover, when $\hbar_N$ is 
small, the BPP symmetric functions \eqref{eqn:BPPPowerSums} are approximately
equal to the Newton symmetric functions \eqref{eqn:NewtonPowerSums}.
It is thus reasonable to hope that, in the semiclassical limit, 
moment computations with BPP matrices degenerate to moment computations with 
classical random matrices, and correlation functions of BPP observables of
the LR process degenerate to correlation functions of its Newton observables.  
This would indeed be the case in a pure
semiclassical limit where $N$ is fixed and $\hbar \to 0$ independently of $N$, a regime
which arises in the context of high-dimensional representations of a fixed general linear
group \cite{CollinsSniady2006}.  However, in the present context
we must contend with the more delicate situation where $\hbar_N \to 0$ 
as $N \to \infty$.  This is a subtle coupling of the semiclassical and 
large-dimension limits in which the decay rate of $\hbar_N$ as a function
of $N$ cannot be ignored.  

In order to avoid dealing with this difficulty, previous works 
\cite{Biane95,CollinsSniady2009} have assumed  rapid decay of $\hbar_N$ in order to 
force the semiclassical limit to occur ``before'' the large $N$ limit, and argued that the
use of this contrived technical device is not a significant conceptual weakness.  However,
recent work of Bufetov and Gorin \cite{BufetovGorin2015} has called this into question
by demonstrating that the asymptotic behaviour of Newton observables of the LR process
is unexpectedly sensitive to the decay rate of $\hbar_N$ --- in particular, the results
of \cite{Biane95,CollinsSniady2009} fail when $\hbar_N$ decays linearly in $N$.  

\subsection{Main results}
The present paper is the first to analyze the asymptotics of the quantum random matrices 
$A_N,B_N$ in an arbitrary, unconditional coupling of the semiclassical and large-dimension limits,
assuming only $\hbar_N \to 0$ as $N \to \infty$, and to obtain analogues of Voiculescu's results 
(Theorems \ref{thm:Voiculescu}and \ref{thm:ClassicalLLN} above) in this generality.  

Our first main result is the counterpart of Theorem \ref{thm:Voiculescu}:
asymptotic freeness of $A_N,B_N$ in all semiclassical/large-dimension limits.

\begin{theorem}
	\label{thm:Main}
	Suppose $\hbar_N=o(1)$ as $N \to \infty$, and the data
	\eqref{eqn:TriangularArrays} is such that the limits
		\begin{equation*}
			s_k = \lim_{N \to \infty} \p_k(a_1^{(N)},\dots,a_N^{(N)})
			\quad\text{ and }\quad
			t_k = \lim_{N \to \infty} \p_k(b_1^{(N)},\dots,b_N^{(N)})
			\end{equation*}
		exist for each $k \in \N^*$.  Then, for any fixed $d \in \N^*$ and $p,q \colon [d] \to \N$, 
		\begin{equation*}
			\lim_{N \to \infty} \E\tr(A_N^{p(1)}B_N^{q(1)} \cdots A_N^{p(d)}B_N^{q(d)})
			= \tau(A^{p(1)}B^{q(1)} \cdots A^{p(d)}B^{q(d)}),
		\end{equation*}
	where $A,B$ are free random variables in a tracial noncommutative probability 
	space $(\A,\tau)$ with moment sequences $(s_k)_{k=1}^{\infty}$ and $(t_k)_{k=1}^{\infty}$,
	respectively.
\end{theorem}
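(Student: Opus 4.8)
The plan is to run the moment method directly on the matrices, transplanting to the quantum setting the combinatorics that underlies the standard proof of Theorem~\ref{thm:Voiculescu}. Fix $d$ and $p,q\colon[d]\to\N$, put $n=\sum_{i=1}^{d}(p(i)+q(i))$, and write the word $w=A_\ourn^{p(1)}B_\ourn^{q(1)}\cdots A_\ourn^{p(d)}B_\ourn^{q(d)}=w_1\cdots w_n$ with each letter $w_a\in\{A_\ourn,B_\ourn\}$. Expanding the normalized matrix trace and using that $A_\ourn$ acts only on the first tensor factor of $\A_\ourn=\End\V_\ourn\otimes\End\W_\ourn$, that $B_\ourn$ acts only on the second, and that $\E=\tr_{\V_\ourn}\otimes\tr_{\W_\ourn}$ splits as a product over these factors, one obtains
\begin{equation*}
\E\,\tr(w)\;=\;\frac{1}{\ourn}\sum_{\mathbf i}\;
\tr_{\V_\ourn}\!\Bigl(\,\prod_{a:\,w_a=A_\ourn}\hbar_\ourn\,\rho_\ourn(e_{i_a i_{a+1}})\Bigr)\,
\tr_{\W_\ourn}\!\Bigl(\,\prod_{b:\,w_b=B_\ourn}\hbar_\ourn\,\sigma_\ourn(e_{i_b i_{b+1}})\Bigr),
\end{equation*}
the sum being over index functions $\mathbf i\colon\{1,\dots,n\}\to\{1,\dots,\ourn\}$ read cyclically, with each product taken in the order dictated by $w$. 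Summing out the indices interior to each maximal run of equal letters, the task reduces to understanding the asymptotics of the two ``quantum entry moments'' — the $\V_\ourn$-factor and the $\W_\ourn$-factor above, which are just normalized $\GL_\ourn(\C)$ characters — as functions of the coincidence pattern of the surviving boundary indices, and then carrying out the remaining boundary-index sum with its $\tfrac1\ourn$ weight.

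The entry moments are the heart of the argument, and I would attack them in three steps. First, the single-block case: when only one kind of letter occurs, Perelomov--Popov collapses the entry moment to the deterministic scalar $\p_k(a_1^{(\ourn)},\dots,a_\ourn^{(\ourn)})$ (or the $b$-version), which converges to $s_k$ (or $t_k$) by hypothesis; this pins down the would-be marginals. Second, a topological expansion of the general entry moment: using the deformed commutation relations \eqref{eqn:DeformedCommutation} to normal-order the products of generators — every transposition of two generators trading a generator for an explicit factor $\hbar_\ourn$ — or, equivalently, expanding the relevant characters, one writes $\tr_{\V_\ourn}(\cdots)$ as a sum over decorated combinatorial data (pair partitions and higher-genus maps) weighted by powers of $\hbar_\ourn$ and, once recombined with the outer sum, by powers of $\tfrac1\ourn$; this is the quantum analogue of the Weingarten/genus expansion of the classical theory, and the picture from \cref{app:GeometricQuantization} is what makes it the ``right'' expansion. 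Third, the selection: one shows that only the planar data compatible with the coloured cyclic structure of $w$ survive the $\tfrac1\ourn$ weighting, and that for each such datum the sub-sum over its $\hbar_\ourn$-decorations resums \emph{exactly} into a product of Perelomov--Popov power sums $\p_k(a^{(\ourn)})$ and $\p_k(b^{(\ourn)})$, rather than of the ordinary power sums. Reassembling, the limit is a sum over pairs of non-crossing partitions refining the $\{A_\ourn,B_\ourn\}$-colouring and planar relative to one another, which by the combinatorial description of freeness recalled in \cref{app:FreeProb} is precisely $\tau(A^{p(1)}B^{q(1)}\cdots A^{p(d)}B^{q(d)})$ for $A,B$ free with marginal moment sequences $(s_k)$ and $(t_k)$.

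I expect the main obstacle to be the selection step under the sole hypothesis $\hbar_\ourn=o(1)$ — precisely the point where one must go beyond \cite{Biane95,CollinsSniady2009} and where the conjecture of \cite{BufetovGorin2015} is resolved. When $\hbar_\ourn$ decays slowly (say $\hbar_\ourn\gg\ourn^{-1}$), the entries of $A_\ourn,B_\ourn$ and the ordinary Newton sums $\overline{p}_k(a_1^{(\ourn)},\dots,a_\ourn^{(\ourn)})$ need not stay bounded, so there is no legitimate ``naive semiclassical limit'' to take first: the $\hbar_\ourn$-corrections are genuinely not negligible — they are exactly what converts $\overline{p}_k$ into $\p_k$. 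The remedy I anticipate is to organize the whole expansion so that every term is already expressed through the $\p_k$'s \emph{before} any estimate is attempted, and then to prove that each non-planar configuration carries at least one surplus power of $\tfrac1\ourn$ with no compensating growth, uniformly in the scaling; the precise role of $\hbar_\ourn\to0$ in disposing of the residual non-classical contributions is the most delicate bookkeeping. Once this is in place, concentration of the BPP observables — and hence, via \eqref{eqn:QuantumCorrelators} and the same expansion applied to two-point functions, the accompanying Law of Large Numbers — follow by the familiar route.
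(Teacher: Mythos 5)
Your route is genuinely different from the paper's, and the difference is not cosmetic. The paper's first move is to exploit the distributional invariance of \cref{prop:Invariance}, replace the mixed moment by a Haar integral over $\group{U}(N)$, and invoke the Weingarten formula \eqref{eqn:WeingartenConvolutionFormula}. This collapses the raw index sum to a sum over \emph{pairs of permutations} $(\pi_1,\pi_2)\in\Sy{d}^2$, and the remaining inner quantities $S^A_{\pi_1}$, $S^B_{\pi_2^{-1}\gamma}$ are then recognized as images of Biasimirs $C_\pi^{(r)}\in\U(\mathfrak{gl}_N)$. You instead propose to carry the raw boundary-index sum all the way through and analyze the entry moments $\tr_{\V_N}\bigl(\hbar_N^{|p|}\rho_N(\textstyle\prod_a e_{i_aj_a})\bigr)$ directly, organized by coincidence pattern --- a sum over set partitions rather than over $\Sy{d}^2$. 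One can start that way, but one then forfeits the compact combinatorial bookkeeping the paper's quantum-part estimate rests on: the whole proof of \cref{thm:QuantumAsymptotics} hinges on writing each Weingarten-indexed term as $N^{\cyc(\pi_1)+\cyc(\pi_2^{-1}\gamma)-1}\Wg_N(\pi_1,\pi_2)\,\mathbf{R}_{(\pi_1,\pi_2)}(N)$ and comparing a small number of permutation statistics ($\cyc$, $\aex$, genus) on the finite set $\Sy{d}^2$.

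The genuine gap is the step you yourself flag as ``the most delicate bookkeeping,'' and it is more than bookkeeping. Your selection step asserts (i) that the $\hbar_N$-decorations of every planar datum resum \emph{exactly} into products of BPP power sums $\p_k$, and (ii) that every other configuration carries a surplus power of $1/N$ with no compensating growth, uniformly over all scalings with $\hbar_N=o(1)$. Neither is established. In the paper's framework (i) is the identification of the classical component $\mathbf{P}_\pi^{(r)}$ as the product of Casimirs, equation \eqref{eqn:ClassicalComponent}, while (ii) is precisely the pair of combinatorial inequalities $\degg\mathbf{Q}_\pi^{(r)}\leq\aex(\pi)$ (\cref{prop:Antiexceedance}) and $\defect(\pi_1)+\defect(\pi_2^{-1}\gamma)\leq 2\genus(\pi_1,\pi_1^{-1}\pi_2,\pi_2^{-1}\gamma)$ (\cref{lem:Defect}), both of which required \emph{corrections} to Biane's original arguments; see the appendices. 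You also tacitly need the input that $\E\tr A_N^k$ and $\E\tr B_N^k$ stay bounded (\cref{prop:PPA-QQA-asymptoticalyO1}), which is exactly what the hypothesis on the $\p_k$-limits provides. Finally, ``just normalized $\GL_N(\C)$ characters'' is loose: $\tr_{\V_N}\bigl(\rho_N(\prod_a e_{i_aj_a})\bigr)$ is a trace of a product of Lie algebra generators in an irreducible representation, not a group character, and evaluating it for a general coincidence pattern does not reduce to Perelomov--Popov. The Biasimir decomposition (\cref{prop:BianePolynomials}) is the tool that renders it tractable, and that decomposition applies only after the Weingarten step has constrained the coincidence patterns to permutations in $\Sy{d}$.
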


Theorems \ref{thm:Voiculescu} and Theorem \ref{thm:Main} are highly
analogous --- let us compare and contrast these results.  

Whereas the hypotheses of 
Theorem \ref{thm:Voiculescu} force $\hbar_N=O(N^{-1})$ as $N \to \infty$, Theorem \ref{thm:Main}
incorporates the much weaker condition $\hbar_N=o(1)$ as an explicit hypothesis.  
The reason for this is that, unlike the Newton observables
	\begin{equation*}
		\overline{p}_k(a_1^{(N)},\dots,a_N^{(N)})
			\quad\text{ and }\quad
		\overline{p}_k(b_1^{(N)},\dots,b_N^{(N)}),
	\end{equation*}
of the data \eqref{eqn:TriangularArrays}, the BPP observables
	\begin{equation*}
		\p_k(a_1^{(N)},\dots,a_N^{(N)})
			\quad\text{ and }\quad
		\p_k(b_1^{(N)},\dots,b_N^{(N)})
	\end{equation*}
of this data only receive contributions from particles in the configurations \eqref{eqn:Eigenvalues}
which have no left neighbour, as can be seen by inspecting the definition 
\eqref{eqn:BPPPowerSums} of the BPP symmetric functions $\p_k(x_1,\dots,x_N).$
Consequently, whereas the 
existence of the limits $x_k,y_k$ forces $\hbar_N$ to decay at a rate inversely proportional
to the number particles in the configurations \eqref{eqn:Eigenvalues}, the existence of the 
limits $s_k,t_k$ only requires that $\hbar_N$ decay at a rate inversely proportional to the 
number of clusters in these configurations.  

On the other hand, there exist sequences of particle configurations with $O(1)$ clusters, 
so one could wonder whether the asymptotic freeness of $A_N,B_N$ 
would still hold true in a more general regime of $\hbar_N =O(1)$.
As we shall see in \cref{sec:counterexample}, this is not the case
and the assumption that $\hbar_N \to 0$ is indeed necessary.

\medskip

Just as Theorem \ref{thm:Voiculescu} implies the 
convergence of Newton observables of the eigenvalue ensemble $\{z_i^{(N)}\}$ 
in expectation, Theorem \ref{thm:Main} implies the convergence of BPP observables
of the LR ensemble $\{c_i^{(N)}\}$ in expectation:
	\begin{equation*}
		\lim_{N \to \infty} \langle \p_k^{(N)} \rangle = w_k\,
	\end{equation*}
where $(w_k)_{k=1}^\infty$ is the free convolution of $(s_k)_{k=1}^{\infty}$ and $(t_k)_{k=1}^{\infty}$.
Our second main result upgrades this to convergence in probability; this gives a 
counterpart of \cref{thm:ClassicalLLN} for the Littlewood--Richardson process
which holds in any and all semiclassical/large-dimension scalings limits.

\begin{theorem}
	\label{thm:QuantumLLN}
		Under the assumptions of Theorem \ref{thm:Main}, for each 
		$k \in \N^*$ we have $\p_k(c_1^{(N)},\dots,c_N^{(N)}) \to w_k$ 
		in probability.
\end{theorem}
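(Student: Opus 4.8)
The plan is to establish concentration of $\p_k^{(N)}$ by the second-moment method, exactly paralleling the passage from \cref{thm:Voiculescu} to \cref{thm:ClassicalLLN}. By Chebyshev's inequality it suffices to show that $\langle\p_k^{(N)}\rangle\to w_k$ and $\Var\!\big(\p_k^{(N)}\big)\to 0$. The first convergence was already noted above as a consequence of \cref{thm:Main}: writing $\tr(C_N^k)=\sum_W \tr W$ with the sum over the $2^k$ words $W$ of length $k$ in the letters $A_N,B_N$, formula \eqref{eqn:QuantumCorrelators} with $r=1$ gives $\langle\p_k^{(N)}\rangle=\sum_W \E[\tr W]\to\sum_W \tau(W)=\tau((A+B)^k)=w_k$. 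For the variance, combining \eqref{eqn:QuantumCorrelators} with $r=2$ and $r=1$ yields
\[
\begin{aligned}
\Var\!\big(\p_k^{(N)}\big)
&=\E\big[\tr(C_N^k)\,\tr(C_N^k)\big]-\big(\E\big[\tr(C_N^k)\big]\big)^2 \\
&=\sum_{W_1,W_2}\Big(\E[\tr W_1\,\tr W_2]-\E[\tr W_1]\,\E[\tr W_2]\Big),
\end{aligned}
\]
the sum being over pairs of words $W_1,W_2$ of length $k$ in $A_N,B_N$. Since there are only finitely many such pairs, the theorem reduces to the asymptotic factorization $\E[\tr W_1\,\tr W_2]\to\tau(W_1)\,\tau(W_2)$ for each fixed pair of words in the BPP matrices, equivalently to $\Cov(\tr W_1,\tr W_2)\to 0$.

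To prove this factorization I would re-run the combinatorial expansion underlying the proof of \cref{thm:Main}, now with two distinguished trace-cycles rather than one. In that expansion $\E[\tr W]$ is written as a sum over surface/factorization data attached to $W$, each term weighted by a monomial in $N^{-1}$ and $\hbar_N$, and \cref{thm:Main} amounts to isolating exactly the planar data, which sum to $\tau(W)$, while everything else is $o(1)$ in every scaling $\hbar_N\to 0$. Applied to $\E[\tr W_1\,\tr W_2]$ the same machinery produces diagrams with two boundary components, which split into \emph{disconnected} diagrams -- a $W_1$-diagram times a $W_2$-diagram, re-summing to $\E[\tr W_1]\,\E[\tr W_2]$ up to errors controlled exactly as in \cref{thm:Main}, hence converging to $\tau(W_1)\tau(W_2)$ -- and \emph{connected} diagrams, which genuinely link the two words. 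The connected ones carry a strictly smaller power of $N$ (down by $N^{-2}$ in the relevant normalization, an Euler-characteristic defect of the connected topology relative to the split one), so they are negligible; subtracting the disconnected part leaves $\Cov(\tr W_1,\tr W_2)\to 0$.

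The main obstacle is precisely the one that already makes \cref{thm:Main} delicate: the connected contributions must be controlled \emph{uniformly over all admissible couplings} $\hbar_N\to 0$, so one must verify that the $\hbar_N$-weight attached to each connected diagram never cancels its $N$-suppression. This requires redoing, for the two-boundary topology, the joint estimates in the pair $(N,\hbar_N)$ that drive the proof of \cref{thm:Main}; I do not anticipate a genuinely new phenomenon, but the bookkeeping must be carried out afresh with the extra boundary component in place. Once $\Cov(\tr W_1,\tr W_2)\to 0$ is in hand the variance bound follows, Chebyshev's inequality gives convergence in probability, and the identification of the limit with the free-convolution moment $w_k$ is exactly the content of \cref{thm:Main}.
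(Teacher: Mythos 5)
Your proposal is correct and follows essentially the same route as the paper: reduce to $\Var(\p_k^{(N)})\to 0$ via Chebyshev, expand as a sum of covariances $\Cov(\tr W_1,\tr W_2)$ over finitely many word pairs, and then redo the classical/quantum decomposition with two trace-cycles so that the quantum part is $O(\hbar_N)=o(1)$ uniformly in the coupling. The only cosmetic difference is at the final step: where you propose to verify the $N^{-2}$ suppression of connected diagrams from scratch, the paper instead observes that once the quantum part is stripped off, the remaining classical piece $\Classical_{12}^{(N)}-\Classical_1^{(N)}\Classical_2^{(N)}$ is literally the covariance of traces of classical random matrices, and it cites existing second-order-freeness/Weingarten results for its vanishing.
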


\subsection{Relation with previous results}
Theorems \ref{thm:Main} and \ref{thm:QuantumLLN} were obtained by Biane 
in \cite{Biane95} under the very strong assumption that $\hbar_N$ decays
superpolynomially in $N$, i.e.~that $\hbar_N = o(N^{-r})$ for each $r \in \N^*$.
In this regime, the semiclassical limit rapidly overtakes the large-dimension 
limit and Theorems \ref{thm:Main} and \ref{thm:QuantumLLN} degenerate
to Theorems \ref{thm:Voiculescu} and \ref{thm:ClassicalLLN}.  In particular,
Theorem \ref{thm:ClassicalLLN} holds verbatim when the eigenvalue process
$\{z_i^{(N)}\}$ is replaced with the Littlewood--Richardson process
$\{c_i^{(N)}\}$, a fact which may be viewed as a quantitative asymptotic version of the 
Saturation Conjecture.  Collins and \'Sniady \cite{CollinsSniady2009} subsequently showed that 
Biane's assumptions could be substantially weakened, and his
results continue to hold assuming only superlinear decay of the semiclassical parameter,
$\hbar_N=o(N^{-1})$.

More recently, motivated by certain problems in 2D statistical physics, Bufetov
and Gorin \cite{BufetovGorin2015} studied the global asymptotics of the LR
process in the scaling limit where $\hbar_N$ decays linearly in $N$.  They
showed that, in this regime where the semiclassical and large-dimension limits
are ``balanced,'' quantum phenomena survive in the limit: although 
$\overline{p}_k(c_1^{(N)},\dots,c_N^{(N)})$ converges in probability to a 
constant $u_k$, the sequence $(u_k)_{k=1}^{\infty}$ is \emph{not} the 
free convolution of $(x_k)_{k=1}^{\infty}$ and $(y_k)_{k=1}^{\infty}$.
However, Bufetov and Gorin were able to show that
Theorem \ref{thm:QuantumLLN} continues to hold in the regime $\hbar_N = \Theta(N^{-1})$, 
and even obtained a precise relationship between the sequences
$(u_k)_{k=1}^{\infty}$ and $(w_k)_{k=1}^{\infty}$ similar in spirit to the classical
Markov--Krein correspondence, see \cite{BufetovGorin2015} for further discussion and
references.  This led them to conjecture 
\cite[Conjecture 1.8]{BufetovGorin2015} that the results of Biane and Collins--\'Sniady
on the asymptotic freeness of the quantum random matrices $A_N,B_N$ 
continue to hold in the regime $\hbar_N=\Theta(N^{-1})$, a fact which would yield a more conceptual
explanation of the main findings of \cite{BufetovGorin2015}.  

Theorem \ref{thm:Main} is an optimal result which subsumes the theorems of Biane and
Collins--\'Sniady, proves the conjecture of Bufetov and Gorin, 
and simultaenously generalizes all of the above to arbitrary semiclassical/large-dimension
limits.
 
\subsection{Organization and proof strategy}
The proof of \cref{thm:Main} occupies \cref{sec:MeanValuesPlanckScale} and 
\cref{sec:MeanValueAsymptotics} below.
Our proof strategy is as follows.  Fix a particular choice of the
discrete parameters $d \in \N^*$, $p,q \colon [d] \to \N$, and let
	\begin{equation}
		\label{eqn:QuantumMixedMoment}
		\tau_N = \E\tr(A_N^{p(1)}B_N^{q(1)} \cdots A_N^{p(d)}B_N^{q(d)})
	\end{equation}
be the corresponding mixed moment of $A_N,B_N$.
Building on (and, in some cases, correcting) techniques pioneered by Biane in his second 
groundbreaking paper on asymptotic representation theory \cite{Biane98},
we demonstrate that $\tau_N$ decomposes as 
	\begin{equation}
		\label{eqn:ClassicalQuantumDecomposition}
		\tau_N = \Classical_N + \hbar_N \Quantum_N,
	\end{equation}
where $\Classical_N$ and $\Quantum_N$ are
polynomial functions of the pure moments 
	\begin{equation*}
	\label{eqn:QuantumPureMoments}
		\E\tr(A_N),\dots, \E\tr(A_N^{|p|}) \quad\text{ and }\quad
		\E\tr(B_N),\dots, \E\tr(B_N^{|q|}),
	\end{equation*}	
with $|p|=p(1)+\dots+p(d)$ the $\ell^1$-norm of $p$, and
$|q|=q(1)+\dots+q(d)$ the $\ell^1$-norm of $q$.  The classical part of $\tau_N$
is independent of the Planck constant $\hbar_N$ --- its form coincides exactly with the 
resolution of the classical random matrix mixed moment
	\begin{equation*}
	\label{eqn:ClassicalMixedMoment}
		\E\tr(X_N^{p(1)}Y_N^{q(1)} \cdots X_N^{p(d)}Y_N^{q(d)})
	\end{equation*}
as a polynomial in the pure moments
	\begin{equation*}
	\label{eqn:ClassicalPureMoments}
		\E\tr(X_N),\dots, \E\tr(X_N^{|p|}) \quad\text{ and }
		\E\tr(Y_N),\dots, \E\tr(Y_N^{|q|}).
	\end{equation*}
The quantum part of $\tau_N$, which is a polynomial in the pure moments
\eqref{eqn:QuantumPureMoments} whose coefficients are themselves 
polynomials in $\hbar_N$, is present because of the noncommutativity of the 
entries of BPP matrices.  

In order to move past previous works and 
free our analysis from contrived assumptions on the decay rate of $\hbar_N$,
we must establish unconditional control on the growth of the quantum part.  Refining
the combinatorial analysis from \cite{Biane98}, 
we demonstrate that, under the assumptions of \cref{thm:Main}, the quantum part of
$\tau_N$ remains bounded as $N \rightarrow \infty$ assuming only $\hbar_N=O(1)$.  
Thus, the classical/quantum decomposition yields the estimate
	\begin{equation*}
		\label{eqn:NotFree}
		\tau_N = \Classical_N + O(\hbar_N)
	\end{equation*}
as $N \to \infty$.
It follows that $\tau_N$ agrees with its classical component up to an error
controlled by the order of magnitude of the semiclassical parameter; in particular,
we have
	\begin{equation*}
		\label{eqn:NowFree}
		\tau_N = \Classical_N + o(1)
	\end{equation*}
whenever $\hbar_N =o(1)$ as $N \to \infty$.

The negligibility of the quantum part of $\tau_N$ in the semiclassical/large-dimension limit identifies
the classical part as the ultimate source of freeness.  Going beyond Biane's computations in 
\cite{Biane95,Biane98}, which relied on techniques of Xu \cite{Xu97} for the computation
of polynomial integrals on unitary groups,
we use the full power of modern Weingarten Calculus as developed in 
\cite{Collins2003,CollinsSniady2004,MatsumotoNovak2013,Novak2010}
to show that, for any $N \geq d$, the classical part
admits an absolutely convergent 
series expansion of the form
	\begin{equation*}
		\label{eqn:ClassicalAsymptotics}
		\Classical_N = \sum_{k=0}^{\infty} \frac{e_k(N)}{N^{2k}},
	\end{equation*}
where each $e_k(N)$ is a polynomial in the pure moments \eqref{eqn:QuantumPureMoments}
whose coefficients are universal integers enumerating certain special
 ``monotone'' paths in the Cayley graph of the 
symmetric group $\Sy{d}$, as generated by the conjugacy class of transpositions.
This is a version of the \emph{topological expansion} familiar from the context of 
classical random matrix theory, and the leading term $e_0(N)$ is exactly
the free probability limit.  In particular, our proof of \cref{thm:Main} does
not rely on prior knowledge that the classical random matrices $X_N,Y_N$ are asymptotically 
free --- rather, it demonstrates that any proof which works for $X_N,Y_N$ also works
verbatim for $A_N,B_N$, in any semiclassical/large-dimension limit.

In \cref{sec:Covariance}, we generalize our classical/quantum decomposition of 
expected traces of words in $A_N,B_N$ to expectations of products of traces.
Again, we are able to show that the quantum part of this decomposition remains 
bounded even when $\hbar_N=O(1)$, so that it can be ignored provided only
$\hbar_N \to 0$.  This implies that the variance of each of the random variables
$\p_k^{(N)}$ tends to zero as $N \to \infty$, which in turn implies Theorem 
\ref{thm:QuantumLLN}.

\section{Mean Values at the Planck Scale}
\label{sec:MeanValuesPlanckScale}
In this section, we fix a particular (but arbitrary) choice of the discrete
parameters $d,p,q$, and let $\tau_N$ denote the corresponding mixed moment 
\eqref{eqn:QuantumMixedMoment}.  We
analyze $\tau_N$ at the Planck scale, $\hbar_N=\hbar$ fixed, where quantum effects
hold full sway.  In particular, we derive the classical/quantum decomposition of
$\tau_N$ announced above as equation \eqref{eqn:ClassicalQuantumDecomposition}
below.  The results of this section are non-asymptotic, i.e.~they hold for any $N \in \N^*$.  

\subsection{Unitary invariance}
Our starting point is the following observation of Biane: unitary invariance survives 
quantization.  More precisely, we have the following distributional symmetry of $A_N$
and $B_N$.

	\begin{proposition}[{\cite[Section 9.2]{Biane98}}]
	\label{prop:Invariance}
		Let $\group{U}(N)$ denote the group of $N \times N$ complex
		unitary matrices, and define a function $f_N \colon \group{U}(N) \to \C$ by 
			\begin{equation*}
				f_N(U) := \E\tr\left(UA_N^{p(1)}U^{-1}B_N^{q(1)} \cdots 
				UA_N^{p(d)}U^{-1}B_N^{q(d)}\right).
			\end{equation*}
		Then, $f_N$ is constant, being equal to $\tau_N$ for all $U 
		\in \group{U}(N)$.	
	\end{proposition}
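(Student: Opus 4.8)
The plan is to show that conjugating $A_N$ by the ``external'' unitary $U$ --- that is, by the scalar matrix $U\in\Mat_N(\C)$ acting on the $N\times N$ block structure of the quantum matrix --- amounts to conjugating $A_N$, entry by entry, by a single \emph{internal} invertible element of the algebra $\A_N$, and that this internal conjugation leaves $B_N$ untouched. Once that is established, the fact that the quantum expectation $\E$ is a trace forces $f_N$ to be constant.

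First I would unwind the Perelomov--Popov description of $A_N$. For $U\in\group{U}(N)$, using $(U^{-1})_{lj}=\overline{U_{jl}}$ and regarding the $e_{kl}$ as matrix units, one computes
\begin{equation*}
	(U A_N U^{-1})_{ij} = \sum_{k,l} U_{ik}\,\overline{U_{jl}}\,\big(\rho_N(\hbar_N e_{kl})\otimes I_{\W_N}\big)
	= \rho_N\big(\hbar_N\, \Ad(U^{T})(e_{ij})\big)\otimes I_{\W_N},
\end{equation*}
since $\sum_{k,l} U_{ik}\overline{U_{jl}}\, e_{kl}=U^{T} e_{ij}\,(U^{T})^{-1}$ in $\gl_N(\C)$. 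Because $\V_N$ is a representation of the \emph{group} $\GL_N(\C)$, not merely of its Lie algebra, the covariance relation $\rho_N(\Ad(g)X)=\rho_N(g)\,\rho_N(X)\,\rho_N(g)^{-1}$ (obtained by differentiating $\rho_N(g e^{tX} g^{-1})=\rho_N(g)\,\rho_N(e^{tX})\,\rho_N(g)^{-1}$ at $t=0$) applies with $g=U^{T}\in\GL_N(\C)$. Setting $\widetilde U:=\rho_N(U^{T})\otimes I_{\W_N}$, an invertible element of $\A_N$, this gives $(U A_N U^{-1})_{ij}=\widetilde U\,(A_N)_{ij}\,\widetilde U^{-1}$, i.e.\ $U A_N U^{-1}=(I_N\otimes\widetilde U)\,A_N\,(I_N\otimes\widetilde U)^{-1}$ inside $\Mat_N(\C)\otimes\A_N$. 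Since $\widetilde U$ acts as the identity on the $\W_N$-leg, the same computation shows $(I_N\otimes\widetilde U)\,B_N\,(I_N\otimes\widetilde U)^{-1}=B_N$.

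Then the rest is formal. Conjugation by the fixed invertible element $I_N\otimes\widetilde U$ is a unital algebra automorphism $\Phi$ of $\Mat_N(\C)\otimes\A_N$, hence respects products and powers, so with $W:=A_N^{p(1)} B_N^{q(1)} \cdots A_N^{p(d)} B_N^{q(d)}$,
\begin{equation*}
	U A_N^{p(1)} U^{-1} B_N^{q(1)} \cdots U A_N^{p(d)} U^{-1} B_N^{q(d)} = \Phi\big(W\big),
\end{equation*}
using $U A_N^{p(m)} U^{-1}=\Phi(A_N^{p(m)})$ and $B_N^{q(m)}=\Phi(B_N^{q(m)})$. Applying the normalized matrix trace pulls the common factor $\widetilde U$ out of the diagonal sum, so $\tr(\Phi(W))=\widetilde U\,\tr(W)\,\widetilde U^{-1}$ in $\A_N$. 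Finally $\E=\tr_{\V_N}\otimes\tr_{\W_N}$ is a tracial functional on $\A_N$ (a tensor product of normalized traces), whence $\E(\widetilde U z \widetilde U^{-1})=\E(z)$ for every $z\in\A_N$; therefore $f_N(U)=\E\,\tr(\Phi(W))=\E\,\tr(W)=\tau_N$ for all $U\in\group{U}(N)$.

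The one genuinely substantive point --- everything else being routine bookkeeping --- is the identity $U A_N U^{-1}=(I_N\otimes\widetilde U)\,A_N\,(I_N\otimes\widetilde U)^{-1}$, i.e.\ the claim that unitary invariance survives quantization. This is exactly where one must use that $A_N$ is built from the $\Ad$-equivariant assignment $e_{ij}\mapsto\rho_N(e_{ij})$ and that $\V_N$ integrates to a representation of $\GL_N(\C)$, so that the adjoint action of $\GL_N(\C)$ on $\gl_N(\C)$ becomes an inner automorphism after applying $\rho_N$; the transpose/complex-conjugate bookkeeping relating $U$, $U^{T}$ and $\overline U$ also needs a little care. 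None of this involves the size of $\hbar_N$, so the statement --- as expected --- holds at any fixed Planck scale.
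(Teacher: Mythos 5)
Your proof is correct. The paper itself does not prove this proposition---it attributes it to Biane \cite[Section 9.2]{Biane98} without giving an argument---so your write-up actually fills a gap rather than paralleling one. The computations are sound: the identity $\sum_{k,l} U_{ik}\overline{U_{jl}}\,e_{kl} = U^T e_{ij}(U^T)^{-1}$ is correct (a quick entrywise check confirms both sides have $(m,n)$-entry $U_{im}\overline{U_{jn}}$), the equivariance $\rho_N(\Ad(g)X)=\rho_N(g)\rho_N(X)\rho_N(g)^{-1}$ is exactly where the hypothesis that $\V_N$ is a representation of the \emph{group} $\GL_N(\C)$ is used, and the subsequent reduction to traciality of $\E = \tr_{\V_N}\otimes\tr_{\W_N}$ is routine. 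Your care with the $U$ versus $U^T$ bookkeeping is warranted---the paper's Appendix on the BPP matrix versus its partial transpose (Sections \ref{subsec:matrix-structure} onward) shows this is precisely the kind of convention mismatch that caused trouble in \cite{Biane98}, and your argument navigates it correctly. One small cosmetic remark: since $U$ is unitary, $U^T$ is also unitary, so $\widetilde U = \rho_N(U^T)\otimes I_{\W_N}$ could even be taken unitary in the (not needed) case one wanted a $*$-automorphism; invertibility, as you note, is all that is actually required.
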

	
As a consequence of \cref{prop:Invariance}, we have
	\begin{equation*}
		\tau_N = \int_{\group{U}(N)} f_N(U)\ \mathrm{d}U,
	\end{equation*}
where the integration is against the unit-mass Haar measure on $\group{U}(N)$.
Expanding the trace, this averaging invariance gives us the following
representation:
	\begin{multline*}
		\tau_N=\frac{1}{N}\sum_{r\colon [4d] \to [N]} \int_{\group{U}(N)} \\
\shoveleft{		\E\left[ U_{r(1)r(2)}\left(A_N^{p(1)}\right)_{r(2)r(3)}U^{-1}_{r(3)r(4)}
		 \left(B_N^{q(1)}\right)_{r(4)r(5)} 
		\cdots \right. }\\ 
		\left. \cdots U_{r(4d-3)r(4d-2)}\left(A_N^{p(d)}\right)_{r(4d-2)r(4d-1)}
		U^{-1}_{r(4d-1)r(4d)} \left(B_N^{q(d)}\right)_{r(4d)r(1)}\right] \dif U.
	\end{multline*}

Let us reparametrize the summation index $r\colon [4d] \to [N]$ 
by the quadruple of functions $i,j,i',j'\colon [d] \to [N]$ defined by
		\begin{multline*}
			\big(r(1),r(2),r(3),r(4),\dots,r(4d-3),r(4d-2),r(4d-1),r(4d)\big) \\
			=:  \big(\underbrace{\underbrace{i(1),j(1),j'(1),i'(1)}_{\text{group of four}},\dots,
			\underbrace{i(d),j(d),j'(d),i'(d)}_{\text{group of four}}}_{\text{$d$ groups}}\big).
		\end{multline*}
Then, using the classical independence of the families of (quantum) random variables
$\{ (A_N)_{ij} \}$ and $\{ (B_N)_{ij} \}$ in $(\A_N,\E)$, the above becomes
	\begin{align*}
		\tau_N &= \frac{1}{N}\sum_{i,j,i',j'\colon [d] \to [N]} I_N(i,j,i',j') \times \\ 
                       & \hspace{10ex} \E\left[ \prod_{k=1}^d
				\left(A_N^{p(k)}\right)_{j(k)j'(k)} \left(B_N^{q(k)}\right)_{i'(k)i\gamma(k)}\right] \\
		&= \frac{1}{N}\sum_{i,j,i',j'\colon [d] \to [N]} I_N(i,j,i',j')\times \\ 
                       & \hspace{10ex} \E\left[ \prod_{k=1}^d
				\left( A_N^{p(k)}\right)_{j(k)j'(k)} \right]
			\E\left[ \prod_{k=1}^d \left(B_N^{q(k)}\right)_{i'(k)i\gamma(k)} \right],
	\end{align*}
where 
\[\gamma:=(1\ 2\ \dots\ d)\] 
is the full forward cycle in the symmetric group $\Sy{d}$, and 
	\begin{equation}
		\label{eqn:MatrixIntegral}
		I_N(i,j,i',j') := \int_{\group{U}(N)} 
			\prod_{k=1}^dU_{i(k)j(k)} \overline{U}_{i'(k)j'(k)} \dif U.
	\end{equation}

\subsection{The Weingarten function}
Matrix integrals of the form \eqref{eqn:MatrixIntegral} have a long history in 
mathematical physics; they appear in contexts ranging from lattice gauge
theory to quantum chromodynamics and string theory, see 
e.g.~\cite{BeenakerBrouwer96,DeWittHooft,GrossTaylor,Samuel,Xu97}.  
In the context of free probability and random matrices, these
integrals were treated by Collins \cite{Collins2003} and Collins--\'Sniady \cite{CollinsSniady2004}, 
who proved that
	\begin{equation}
		\label{eqn:WeingartenConvolutionFormula}
		I_N(i,j,i',j') = \sum_{({\pi_1},{\pi_2}) \in \Sy{d}^2} \delta_{i',i{\pi_1}} \delta_{j',j{\pi_2}}
		\Wg_N({\pi_1},{\pi_2}),
	\end{equation}
where 
	\begin{equation*}
		\label{eqn:WeingartenFunction}
		\Wg_N \colon  \Sy{d}^2 \to \mathbb{Q}
	\end{equation*}
is a special function on pairs of permutations  which they named the 
\emph{Weingarten function}.  

There are now several descriptions of the Weingarten function available; in this paper, 
we will use a series expansion of $\Wg_N$ obtained by Novak \cite{Novak2010}
and Matsumoto--Novak \cite{MatsumotoNovak2013}, 
which is explained in \cref{sec:MeanValueAsymptotics}.  
For now, plugging \eqref{eqn:WeingartenConvolutionFormula} into our calculation above 
eliminates the indices $i',j'$ and produces the formula
	\begin{equation}
	\label{eq:here-we-use-classical-independence}
		\tau_N=\frac{1}{N} \sum_{({\pi_1},{\pi_2}) \in \Sy{d}^2} \Wg_N({\pi_1},{\pi_2})
 		\ \E[S^A_{{\pi_1}}]\ \E[S^B_{{\pi_2}^{-1}\gamma}],
	\end{equation}
where
	\begin{align}
	\label{eqn:here-we-want-to-shuffle-factors1}
		S^A_{{\pi_1}}
		:=& \sum_{i\colon  [d] \to [N]} \prod_{k=1}^d \left(A_N^{p(k)}\right)_{i(k)i{\pi_1}(k)} \\
	\intertext{and} 
	\label{eqn:here-we-want-to-shuffle-factors2}
		S^B_{{\pi_2}^{-1}\gamma} :=&
		\sum_{i\colon  [d] \to [N]} \prod_{k=1}^d \left(B_N^{q(k)}\right)_{i(k)i{\pi_2}^{-1}\gamma(k)}.
	\end{align}

Our goal now is to express the operators \eqref{eqn:here-we-want-to-shuffle-factors1} and
\eqref{eqn:here-we-want-to-shuffle-factors2} in terms of the operators 
	\begin{equation*}
		\tr(A_N),\tr(A_N^2),\dots \quad\text{ and }\quad
		\tr(B_N),\tr(B_N^2),\dots,
	\end{equation*}
a task which is non-trivial due to the fact that the matrix elements of $A_N$ and
$B_N$ do not commute.
It is advantageous to lift this problem to the universal enveloping
algebra $\U(\mathfrak{gl}_N)$.

\subsection{Casimirs and Biasimirs}
Let $Z_N$ be the $N \times N$ matrix over $\U(\mathfrak{gl}_N)$ with elements 
\[(Z_N)_{ij}=\hbar\ e_{ij}.\]  
This matrix was introduced by Perelomov and Popov
\cite{PerelomovPopov1967}, who studied traces of its powers,
	\begin{equation*}
		C_k := \Tr Z_N^k = \sum_{i\colon [k] \rightarrow [N]}
		(Z_N)_{i(1)i(2)} \cdots (Z_N)_{i(k)i(1)}, \quad k \in \N^*,
	\end{equation*}
which they called \emph{higher Casimirs}, see also \cite{Zelobenko73}.  
This nomenclature stems from the fact 
that, up to a multiplicative factor of $\hbar^2$, the element $C_2$ coincides with 
the usual Casimir element which resides in the center $\mathcal{Z}_N$ of 
$\U(\mathfrak{gl}_N)$.  The following Theorem summarizes the main properties of 
higher Casimirs.

	\begin{theorem}[\cite{PerelomovPopov1967,Zelobenko73}]
	\label{thm:PerelomovPopov}
		The higher Casimirs generate $\mathcal{Z}_N$ as a polynomial ring,
			\begin{equation*}
			\mathcal{Z}_N = \C[C_1,C_2,C_3,\dots].
			\end{equation*}
		Moreover, if $(\mathbf{X},\rho)$ is the irreducible representation 
		of $\group{GL}_N(\C)$ indexed by the particle configuration
		$c_1 > \dots > c_N$ on $\lattice$, the image of $C_k$ in this representation
		is the scalar operator
			\begin{equation*}
				\rho(C_k) = \wp_k(c_1,\dots,c_N)I_{\mathbf{X}}
			\end{equation*}
		with eigenvalue
			\begin{equation*}
				\wp_k(c_1,\dots,c_N) 
				=\sum_{i=1}^N \prod_{j \neq i}  \left(1-\frac{\hbar}{c_i-c_j} \right)c_i^k.
			\end{equation*}
	\end{theorem}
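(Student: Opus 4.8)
The plan is to prove the two assertions separately, treating the eigenvalue formula first since the statement about $\mathcal{Z}_N$ being a polynomial ring will follow once we know the higher Casimirs are central and have the right ``highest weight'' behaviour. To see that each $C_k = \Tr Z_N^k$ is central, I would compute the commutator $[e_{ab}, (Z_N^k)_{ij}]$ directly from the defining relation $[e_{ab}, e_{ij}] = \delta_{bi}e_{aj} - \delta_{ja}e_{ib}$ of $\U(\mathfrak{gl}_N)$; an induction on $k$ shows $[e_{ab},(Z_N^k)_{ij}] = \hbar\bigl(\delta_{bi}(Z_N^k)_{aj} - \delta_{ja}(Z_N^k)_{ib}\bigr)$, so that summing over $i=j$ telescopes to zero and $[e_{ab},C_k]=0$ for all $a,b$. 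Thus $C_k \in \mathcal{Z}_N$.

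Next, to obtain the eigenvalue $\wp_k(c_1,\dots,c_N)$, I would evaluate $\rho(C_k)$ on a highest weight vector $v_\lambda$ of $\mathbf{X}$, where $\lambda$ is the dominant weight corresponding to the configuration $c_1 > \dots > c_N$ (after the usual shift by $\hbar \rho_{\mathrm{Weyl}}$, the $c_i$ play the role of shifted coordinates $\lambda_i + \hbar(N-i) + \text{const}$, which is precisely why the ``staircase'' correction $\prod_{j\neq i}(1-\hbar/(c_i-c_j))$ appears). The cleanest route is to use the fact that $Z_N$ satisfies a \emph{Capelli-type} identity: the matrix $Z_N$ (in the Perelomov--Popov normalization) acts on the highest weight vector in a way that is upper-triangular with respect to the weight filtration, so that $\rho(\Tr Z_N^k)v_\lambda$ is computed by the diagonal entries alone, which are the ``classical eigenvalues'' $c_i$ decorated by the interaction factors coming from the non-commutativity of the $e_{ij}$. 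Since $C_k$ is central and $\mathbf{X}$ is irreducible, $\rho(C_k)$ is a scalar, and that scalar is read off from the highest weight vector, giving $\wp_k(c_1,\dots,c_N)$. The main obstacle is precisely this bookkeeping: tracking how the off-diagonal $e_{ij}$ contribute when a power of $Z_N$ is applied to $v_\lambda$, and verifying that the net effect is exactly the product $\prod_{j\neq i}(1-\hbar/(c_i-c_j))$ rather than some other rational function of the $c_i$. This is where one must be careful with the shift conventions and with the order of the factors $e_{ij}$, and it is the step I expect to consume the most effort.

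Finally, for the statement $\mathcal{Z}_N = \C[C_1, C_2, C_3, \dots]$, I would invoke the Harish-Chandra isomorphism: $\mathcal{Z}_N$ is isomorphic, via the Harish-Chandra map, to the ring of shifted-symmetric polynomials in the $c_i$, i.e.\ polynomials in $c_1,\dots,c_N$ invariant under the shifted action of $\mathfrak{S}_N$. By the first part, $C_k$ maps to the shifted power sum $\wp_k(c_1,\dots,c_N)$, and it is a standard fact that the shifted power sums generate the ring of shifted-symmetric polynomials freely (one shows the top-degree parts are the ordinary power sums $p_k$, which generate the ordinary symmetric polynomials, and then lifts by a triangularity/degree argument). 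Hence the $C_k$ generate $\mathcal{Z}_N$ as a polynomial ring. The only subtlety here is that in $N$ variables only $C_1,\dots,C_N$ are algebraically independent and the higher ones are polynomial in these; the statement as written is understood in this sense, and no separate argument beyond the Harish-Chandra picture is needed.
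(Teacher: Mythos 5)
You should note first that the paper does not prove this theorem: it is stated as a classical background result and cited to Perelomov--Popov and \v{Z}elobenko, so there is no ``paper's proof'' to compare against. Your plan does follow the standard route for proving it: (i) centrality of $C_k=\Tr Z_N^k$ by commuting $e_{ab}$ through; (ii) reading off the eigenvalue from the highest weight vector, since a central element acts as a scalar on an irreducible module; and (iii) the Harish--Chandra isomorphism together with the freeness of shifted power sums to conclude that $\mathcal{Z}_N$ is the polynomial ring on the $C_k$ (with the understood caveat about algebraic independence holding only for $C_1,\dots,C_N$). Steps (i) and (iii) are fine in outline, modulo a small typo in (i): the induction gives $[e_{ab},(Z_N^k)_{ij}]=\delta_{bi}(Z_N^k)_{aj}-\delta_{ja}(Z_N^k)_{ib}$ with no extra factor of $\hbar$, since the $\hbar$ in $(Z_N)_{ij}=\hbar e_{ij}$ is already absorbed into $(Z_N^k)$ on the right-hand side; the centrality conclusion is unaffected.

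The genuine gap is in step (ii). The claim that $\rho(\Tr Z_N^k)v_\lambda$ is ``computed by the diagonal entries alone, decorated by the interaction factors'' is not correct as a mechanism and does not describe what actually happens. Already for $k=2$ the off-diagonal terms $e_{ij}e_{ji}$ with $i<j$ act nontrivially on $v_\lambda$: one has $e_{ij}e_{ji}v_\lambda=(\lambda_i-\lambda_j)v_\lambda$, and it is precisely such cross-terms, cascading through all $k$ letters via iterated commutators, that produce both the Weyl-vector shift hidden in the $c_i$ and the product $\prod_{j\neq i}\bigl(1-\hbar/(c_i-c_j)\bigr)$. There is no filtration with respect to which $Z_N$ is upper triangular on the highest weight vector in the sense you need; nothing localizes the computation to the diagonal. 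The standard proofs control this cascade with additional structure --- a characteristic-polynomial or generating-function recursion for the $C_k$, the Capelli determinant, or (in modern treatments) the quantum determinant in the Yangian of $\mathfrak{gl}_N$ --- and without committing to and carrying out one of these, the derivation of the explicit eigenvalue $\wp_k(c_1,\dots,c_N)$, which is the real content of the theorem, is missing. You flag this step yourself as the bottleneck; I am confirming that the upper-triangularity heuristic will not close it and that the nontrivial telescoping of off-diagonal contributions is where the actual proof lives.
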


Note that traces of powers of our quantum random matrices $A_N$ and $B_N$, 
which are operators acting in $\V_N \otimes \W_N$, are essentially images of higher 
Casimirs in irreducible representations;
more precisely, we have
	\begin{equation*}
	\label{eqn:TracesAreImagesOfCasimirs}
		\begin{split}
			\Tr(A_N^k)
				&= \rho_N(C_k) \otimes I_{\W_N}, \\
			\Tr(B_N^l)
				&= I_{\V_N} \otimes \sigma_N(C_l).
		\end{split}
	\end{equation*}
In particular, by \cref{thm:PerelomovPopov}, these traces are the following
scalar operators,
	\begin{equation*}
	\label{eqn:TracesAreScalarOperators}
		\begin{split}
			\Tr(A_N^k)
				&= \wp_k(a_1^{(N)},\dots,a_N^{(N)})I_{\V_N} \otimes I_{\W_N}, \\
			\Tr(B_N^l) 
				&= \wp_k(b_1^{(N)},\dots,b_N^{(N)})I_{\V_N} \otimes I_{\W_N}.
		\end{split}
	\end{equation*}
We conclude that, for any $k,l \in \N^*$, the operators $\Tr(A_N^k)$ and $\Tr(A_N^l)$
are classically independent quantum random variables in $(\A_N,\E)$ with known 
distributions, and similarly for the operators $\Tr(B_N^k),\Tr(B_N^l)$.

In order to understand the operators $S^A_{\pi_1},S^B_{{\pi_2}^{-1}\gamma}$ which
appear in our formula \eqref{eq:here-we-use-classical-independence} for $\tau_N$,
we must understand certain elements of $\U(\mathfrak{gl}_N)$ which further 
generalize higher Casimirs.  More precisely, we have that
	\begin{equation}
	\label{eqn:SfunctionsAreImagesOfBiasimirs}
		\begin{split}
			S^A_{{\pi_1}}
				&= \rho_N(C_{{\pi_1}}^{(p)}) \otimes I_{\W_N}, \\
			S^B_{{\pi_2}^{-1}\gamma}
				&= I_{\V_N} \otimes \sigma_N(C_{{\pi_2}^{-1}\gamma}^{(q)}),
		\end{split}
	\end{equation}
where, for any permutation $\pi \in \Sy{d}$ and function $r \colon [d] \to [N]$, we define
	\begin{equation}
		\label{eqn:Biasimir}
		C_\pi^{(r)} := \sum_{i\colon[d] \rightarrow [N]}
		(Z_N^{r(1)})_{i(1)i\pi(1)} \cdots (Z_N^{r(d)})_{i(d)i\pi(d)}.
	\end{equation}
Elements in $\U(\mathfrak{gl}_N)$ of the form \eqref{eqn:Biasimir} were first considered by 
Biane in \cite{Biane98}, and we shall refer to them as \emph{Biasimirs}, a portmanteau
of ``Biane'' and ``Casimir''. 
 Indeed, if $\pi = \gamma$ is the full forward cycle
in $\Sy{d}$, then $C_{\pi}^{(r)}$ reduces to the higher Casimir $C_{|r|}$.  	

Let us look at some examples of Biasimirs.  As an easy example, take $d=5$ 
and $\pi \in \Sy{5}$ to be the permutation $\pi = (1\ 2\ 3)(4\ 5)$.  Then, for any 
$r \colon [5] \rightarrow [N]$, we have
	\begin{align*}
		C_\pi^{(r)} &= \sum_{i\colon [5] \rightarrow [N]} 
			(Z_N^{r(1)})_{i(1)i(2)}(Z_N^{r(2)})_{i(2)i(3)}(Z_N^{r(3)})_{i(3)i(1)}
				(Z_N^{r(4)})_{i(4)i(5)}(Z_N^{r(5)})_{i(5)i(4)} \\
		&=C_{r(1)+r(2)+r(3)}C_{r(4)+r(5)}.
	\end{align*}
More generally, whenever $\pi \in \Sy{d}$ is a \emph{canonical permutation}, 
i.e.~a permutation of the form
	\begin{equation}
         \label{eq:canonical}
		\pi = (1\ 2\ \dots\ n_1)(n_1+1\ n_1+2\ \dots\ n_1+n_2) \cdots,
	\end{equation}
for some composition $(n_1,n_2,\dots)$ of $d$, the corresponding Biasimir will
be a simple monomial function of Casimirs.  To be precise, if $\pi = \gamma_1 \gamma_2 
\cdots \gamma_k$ is the disjoint cycle decomposition of a canonical 
permutation $\pi$, then
	\begin{equation}
	\label{eqn:ClassicalComponent}
		C_\pi^{(r)} = \prod_{j=1}^k C_{\sum_{i \in \gamma_j} r(i)}.
	\end{equation}	

Biasimirs corresponding to non-canonical permutations are more complicated 
functions of higher Casimirs.  

\begin{example} 
\label{example:Biasimir-bad}
Consider the Biasimir of 
degree $d=3$ corresponding to the non-canonical permutation $\pi = (1\ 3\ 2)$
and some general power function $r$ such that $r(2)=r(3)=1$,
	\begin{equation*}
		C_\pi^{(r)} = 
		\sum_{i\colon[3] \rightarrow [N]} (Z_N^{r(1)})_{i(1)i(3)} (Z_N)_{i(2)i(1)} 
		(Z_N)_{i(3)i(2)}.
	\end{equation*}
This is \emph{not} the higher Casimir $C_{r(1)+2}$, because the factors in each
term of the sum are in the wrong order.  However,
we can sort the letters in each summand using the commutation relations

Carrying this out and summing over all $i\colon[3] \rightarrow [N]$, we obtain
	\begin{align*}
		C_\pi^{(r)} 
		&= \Tr Z_N^{r(1)} Z_N Z_N + \hbar \Tr Z_N^{r(1)} \Tr Z_N 
		-\hbar N \Tr Z_N^{r(1)} Z_N \\
\nonumber
		&= C_{r(1)+2} + \hbar C_{r(1)}\ C_{1} - \hbar N\ C_{r(1)+1}.
	\end{align*}
\end{example}

In general, we have the following polynomial representation of Biasimirs in terms of 
higher Casimirs. 
\begin{proposition}
\label{prop:BianePolynomials}
For any permutation $\pi \in \Sy{d}$,
and any function $r\colon [d] \rightarrow \N$,
there exist unique polynomials $\mathbf{P}_\pi^{(r)}$ and $\mathbf{Q}_\pi^{(r)}$ in 
$|r|$ and $|r|+2$ variables, respectively, such that 
	\begin{equation*}
	C_\pi^{(r)} = \mathbf{P}_\pi^{(r)}(C_1,\dots,C_{|r|}) + 
	\hbar \mathbf{Q}_\pi^{(r)}(\hbar,N,C_1,\dots,C_{|r|})
	\end{equation*}
holds for all $N \in \N^*$.
\end{proposition}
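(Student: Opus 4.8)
The plan is to prove this by induction on the number of ``inversions'' of $\pi$ relative to the canonical cycle structure, using the commutation relations of $\U(\mathfrak{gl}_N)$ to reduce an arbitrary Biasimir to canonical ones (for which \eqref{eqn:ClassicalComponent} gives a clean monomial expression in the $C_k$'s). Fix $\pi \in \Sy{d}$ and $r \colon [d] \to \N$. The defining sum \eqref{eqn:Biasimir} orders the factors $(Z_N^{r(k)})_{i(k)i\pi(k)}$ by the index $k = 1, \dots, d$; I will instead try to group the factors according to the cycle structure of $\pi$, so that within each cycle the factors telescope into a single $\Tr Z_N^{m}$ with $m$ the sum of the relevant exponents. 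The obstruction to doing this directly is that reordering two factors $(Z_N^{a})_{i(k)i\pi(k)}$ and $(Z_N^{b})_{i(\ell)i\pi(\ell)}$ requires the commutator $[(Z_N^{a})_{xy},(Z_N^{b})_{zw}]$, which by the $\gl_N$-relations is $\hbar$ times a linear combination of single matrix entries $(Z_N^{c})_{\bullet\bullet}$ with $c < a+b$ (together with a $\hbar N$ correction term, as in Example~\ref{example:Biasimir-bad}). Each such swap therefore produces a main term (the reordered product) plus an $\hbar$-error which is again a sum of Biasimir-type expressions, but of strictly smaller total degree $|r|$ or with strictly fewer inversions; this is exactly what drives the induction.

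Concretely, I would set up a double induction: the outer induction on $|r|$, and the inner induction on a suitable complexity statistic of $\pi$ — for instance, the number of pairs $k < \ell$ for which $\pi$ ``crosses'' them in a way incompatible with being a canonical permutation (so that the statistic is zero precisely when $\pi$ is canonical). In the base case $\pi$ canonical, \eqref{eqn:ClassicalComponent} gives $C_\pi^{(r)} = \prod_j C_{\sum_{i\in\gamma_j} r(i)}$, which is a polynomial in $C_1, \dots, C_{|r|}$ with no $\hbar$-dependence, so we take $\mathbf{P}_\pi^{(r)}$ to be this monomial and $\mathbf{Q}_\pi^{(r)} = 0$. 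For the inductive step, one adjacent transposition brings $\pi$ closer to canonical form; the main term is $C_{\pi'}^{(r)}$ for a less-complex $\pi'$, and the commutator corrections are, after resumming over $i \colon [d] \to [N]$, finite sums of terms of the form $C_{\pi''}^{(r'')}$ (possibly times $\hbar$ or $\hbar N$) with $|r''| < |r|$, or of $C_{\pi''}^{(r)}$ with the same $|r|$ but strictly smaller complexity. Applying the inductive hypothesis to each of these expresses everything as $\mathbf{P} + \hbar \mathbf{Q}$ with $\mathbf{P}$ a polynomial in $C_1, \dots, C_{|r|}$ independent of $\hbar, N$ and $\mathbf{Q}$ a polynomial in $\hbar, N, C_1, \dots, C_{|r|}$; collecting terms gives the claimed form, where the factor of $N$ from the $\hbar N$ corrections is absorbed into $\mathbf{Q}_\pi^{(r)}$.

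For uniqueness, I would invoke algebraic independence of the higher Casimirs: by Theorem~\ref{thm:PerelomovPopov}, $C_1, C_2, C_3, \dots$ generate the center $\mathcal{Z}_N$ as a \emph{polynomial} ring, so for $N$ large enough (in fact $N \geq |r|$ suffices, since $C_\pi^{(r)}$ only involves $C_1, \dots, C_{|r|}$ and these are algebraically independent in $\mathcal{Z}_N$) there can be only one polynomial identity of the stated shape; and the requirement that the identity hold ``for all $N \in \N^*$'' then pins down the coefficients as fixed rational (indeed integer) numbers, forcing uniqueness of both $\mathbf{P}_\pi^{(r)}$ and $\mathbf{Q}_\pi^{(r)}$. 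The main obstacle I anticipate is bookkeeping: verifying that a single commutator swap, once resummed over all index functions $i$, genuinely yields a linear combination of honest Biasimirs $C_{\pi''}^{(r'')}$ (rather than some more general $\U(\gl_N)$-element), and that the complexity statistic I choose actually decreases under each swap — this is the combinatorial heart of the argument and is where Biane's analysis in \cite{Biane98} needs to be reproduced (and, per the paper's remark, corrected).
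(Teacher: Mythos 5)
Your plan — reduce to canonical form by adjacent-transposition swaps and track $\hbar$-corrections by a double induction — is in the right spirit, but the step you flag as ``bookkeeping'' is a genuine gap, and moreover the specific claim you make there is precisely the error the paper is correcting. You assert that $[(Z_N^{a})_{xy},(Z_N^{b})_{zw}]$ is $\hbar$ times a linear combination of single matrix entries $(Z_N^{c})_{\bullet\bullet}$; this is exactly the false commutation relation \eqref{eqn:HigherCommutation} attributed to Biane, and as the paper notes in \cref{sec:erratum}, it fails whenever both exponents exceed $1$. When $a,b>1$ the commutator is $\hbar$ times a sum of \emph{products} of entries of $Z_N$ (with various index contractions), not a single entry of a power of $Z_N$, so the correction terms are not immediately of the form required by your inductive hypothesis. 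Your outer induction on $|r|$ is meant to absorb this, but you would need to actually verify that after resumming over $i\colon[d]\to[N]$ each such correction is a $\Z[\hbar][N]$-linear combination of honest Biasimirs of strictly smaller $|r|$ — and that verification is the whole difficulty, not a formality.

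The paper's proof sidesteps this entirely by a flattening device: one constructs a permutation $\pi'\in\Sy{|r|}$ with $C_\pi^{(r)}=C_{\pi'}^{(\mathbf{1}_{|r|})}$, by replacing each $i\in[d]$ by $r(i)$ consecutive copies and threading $\pi'$ through them. This reduces the general statement to the case $r=\mathbf{1}$, in which every factor is a single entry $(Z_N)_{\bullet\bullet}=\hbar\, e_{\bullet\bullet}$, so the only commutators that ever arise are the elementary $\gl_N$ relations $[e_{ij},e_{kl}]=\delta_{jk}e_{il}-\delta_{li}e_{kj}$. At that point the Coxeter-conjugation induction you envision is carried out correctly in \cref{lem:bad-things-that-can-happen-while-conjugating} and \cref{lem:aex-is-nice-at-each-step}. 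Your uniqueness argument via algebraic independence of $C_1,\dots,C_{|r|}$ for $N\ge|r|$ and the requirement that the identity hold for all $N$ is fine. So: the structure of your argument is salvageable, but as written it repeats Biane's original error; the flattening trick is what lets the paper avoid ever having to commute entries of $Z_N^a$ and $Z_N^b$ with $a,b\ge 2$.
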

\begin{proof}
This result is a generalization of a result of Biane \cite[Lemma 8.4.1]{Biane98}
who considered the special case when $r=\mathbf{1}_d \colon [d]\rightarrow \N$
is the constant function, equal to $1$. 
It might seem a bit worrying that Biane's proof uses a result which is not
quite correct, namely \cite[Lemma 8.3]{Biane98},
nevertheless we shall provide a corrected version of the latter result in 
\cref{lem:bad-things-that-can-happen-while-conjugating}.

The general case follows by an observation that there exists a natural permutation
$\pi'\in\Sy{|r|}$ with the property that 
\begin{equation}
    \label{eq:yes-you-can-make-it-flat}
    C_\pi^{(r)}=C_{\pi'}^{(\mathbf{1}_{|r|})}   
\end{equation}
is equal to the corresponding Biasimir with all exponents equal to $1$.
This permutation $\pi'$ is obtained from $\pi\colon [d]\to [d]$ by replacing each element 
$i\in[d]$ by its $r(i)$ copies which will be denoted by 
$i+\varepsilon, i+2 \varepsilon, \dots, i+ r(i) \varepsilon$, where $\varepsilon>0$ is an
infinitesimally small positive number.
Permutation $\pi'$ maps the rightmost copy of $i$ to the
leftmost copy of $\pi(i)$:
\[ \pi' \colon i+r(i) \varepsilon \mapsto \pi(i)+ \varepsilon \]
 and it maps each non-rightmost copy to its neighbor on the right:
\[ \pi' \colon i+k \varepsilon \mapsto i + (k+1) \varepsilon \qquad \text{for } 1\leq k<r(i).\]
The permutation $\pi'$ acts on the ordered set 
\[ \{ i + k \varepsilon : i\in [d], k\in [r(i)] \}; \]
by changing the labels in a way which preserves the order, $\pi'$ can be viewed as 
a usual permutation in $\Sy{|r|}$. 
\end{proof}

We refer to the polynomials $\mathbf{P}_\pi^{(r)}$ and $\mathbf{Q}_\pi^{(r)}$ as the
``classical'' and ``quantum'' components of the Biasimir $C_\pi^{(r)}$.  The classical
component is simple, being given by the right hand side of
formula \eqref{eqn:ClassicalComponent} above; the quantum component is more complicated.
Returning to \cref{example:Biasimir-bad} 
where $\pi \in \Sy{3}$ is the cyclic permutation $\pi=(1\ 3\ 2)$
and $r(2)=r(3)=1$ we have
	\begin{align*}
		\mathbf{P}_\pi^{(r)}(C_1,\dots,C_{|r|}) &= C_{r(1)+2}, \\
		\mathbf{Q}_\pi^{(r)}(\hbar,N,C_1,\dots,C_{|r|}) &= C_{r(1)} C_{1} -
		N\ C_{r(1)+1}
	\end{align*}
for the classical and quantum components of the Biasimir $C_\pi^{(r)}$.

Let us view the quantum component $\mathbf{Q}_\pi^{(r)}$ of a given 
Biasimir $C_\pi^{(r)}$ as an element of the polynomial ring $\Z[\hbar][N,C_1,\dots,C_{|r|}]$.
On this polynomial ring we impose the grading in which each variable
$N,C_1,\dots,C_{|r|}$ has degree one.
Let $\cyc(\pi)$ denote the number of factors in the decomposition of 
$\pi$ into disjoint cyclic permutations, and let
$\aex(\pi)$ denote the number of \emph{antiexceedances} of the permutation $\pi\in\Sy{d}$,
that is the number of indices $i \in [d]$ such that $\pi(i) \leq i$.  
The following Proposition
is a corrected version of \cite[Proposition 8.5]{Biane98}, see \cref{sec:erratum} for the proof
and further discussion.

	\begin{proposition}
	\label{prop:Antiexceedance}
		The degree of the classical component $\mathbf{P}_\pi^{(r)}$ is $\cyc(\pi)$.
		The degree of the quantum component $\mathbf{Q}_\pi^{(r)}$ is at most $\aex(\pi)$.
	\end{proposition}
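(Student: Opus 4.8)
The plan is to reduce everything to the all-exponents-equal-one case via the flattening identity \eqref{eq:yes-you-can-make-it-flat} and then induct on the number of ``inversions'' that need to be repaired by the commutation relations. By \cref{prop:BianePolynomials} it suffices to treat $r = \mathbf 1_d$, because the permutation $\pi'$ produced there satisfies $\cyc(\pi') = \cyc(\pi)$ (copies of a single cycle of $\pi$ form a single cycle of $\pi'$) and one checks directly from the recipe that $\aex(\pi') = \aex(\pi)$: a copy $i+k\varepsilon$ with $1\le k < r(i)$ is sent to $i+(k+1)\varepsilon > i+k\varepsilon$, so it is never an antiexceedance, while the rightmost copy $i+r(i)\varepsilon$ is an antiexceedance of $\pi'$ exactly when $\pi(i)+\varepsilon \le i+r(i)\varepsilon$, i.e. essentially when $\pi(i)\le i$; thus antiexceedances are preserved under flattening. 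The claim about $\mathbf P_\pi^{(r)}$ is immediate once $\pi$ is canonical, where \eqref{eqn:ClassicalComponent} exhibits $\mathbf P_\pi^{(r)}$ as a product of exactly $\cyc(\pi)$ higher Casimirs; for general $\pi$ the classical component is by definition (and by the correction in \cref{lem:bad-things-that-can-happen-while-conjugating}) the ``leading'' term obtained by bringing the factors into canonical order while discarding every commutator, which does not change the underlying cycle type, so $\deg \mathbf P_\pi^{(r)} = \cyc(\pi)$.

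The core of the argument is the bound $\deg \mathbf Q_\pi^{(r)} \le \aex(\pi)$, proved by induction on a suitable complexity statistic of $\pi$ — say the number of pairs $i<k$ with $\pi$ not already in canonical order at that spot, or more simply on $d - \cyc(\pi)$ combined with a secondary induction on the ``spread'' of the cycles. When $\pi$ is canonical there is nothing to sort and $\mathbf Q_\pi^{(r)} = 0$. Otherwise pick the first place where the word $(Z_N^{r(1)})_{i(1)i\pi(1)}\cdots$ fails to be a product of genuine traces, and swap the two offending adjacent matrix-element factors using the commutation relations $[(Z_N)_{ab},(Z_N)_{cd}] = \hbar(\delta_{bc}(Z_N)_{ad} - \delta_{da}(Z_N)_{cb})$. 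Summing over the index function $i\colon[d]\to[N]$, the leading (commutator-free) term is exactly $C_{\pi_0}^{(r)}$ for a permutation $\pi_0$ obtained from $\pi$ by one elementary move that strictly decreases the complexity statistic while keeping $\aex(\pi_0) = \aex(\pi)$ (an adjacent-transposition conjugation inside the Cayley-graph picture does not create antiexceedances); the two correction terms are, after summation, Biasimirs $C_{\pi_1}^{(r_1)}$ and $N\cdot C_{\pi_2}^{(r_2)}$ where $\pi_1,\pi_2 \in \Sy{d-1}$ act on a word with one fewer factor (two factors have been merged), so $\cyc(\pi_1),\cyc(\pi_2)$ and $\aex(\pi_1),\aex(\pi_2)$ are all bounded by the corresponding quantities for $\pi$, and crucially $\aex(\pi_2) \le \aex(\pi) - 1$ precisely because the merging happens at an antiexceedance — this is the point that must be checked carefully, as it is the source of the $-1$ that absorbs the extra factor of $N$. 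Then by the inductive hypothesis $\deg C_{\pi_1}^{(r_1)} \le \max(\cyc(\pi_1), 1 + \aex(\pi_1)) \le 1 + \aex(\pi)$ and $\deg\!\big(N\cdot C_{\pi_2}^{(r_2)}\big) \le 1 + \max(\cyc(\pi_2), 1+\aex(\pi_2)) \le 1 + \aex(\pi)$, which after stripping the overall $\hbar$ gives $\deg \mathbf Q_\pi^{(r)} \le \aex(\pi)$.

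The main obstacle — and the reason \cite[Proposition 8.5]{Biane98} needed correcting — is exactly the bookkeeping in that last paragraph: one must track not just the cycle structure but the precise location of each sorting move relative to the antiexceedance set, and one must be sure that the ``corrected'' commutation lemma (\cref{lem:bad-things-that-can-happen-while-conjugating}) is invoked in a way that genuinely controls the $\delta_{da}(Z_N)_{cb}$ term, which is the one that spawns the dangerous $N\,C_{r(1)+1}$-type contribution visible already in \cref{example:Biasimir-bad}. I would organize the induction around the Cayley graph of $\Sy{d}$ generated by transpositions, choosing at each step a transposition that moves $\pi$ closer to a canonical permutation along a geodesic, and verify the antiexceedance accounting by a direct case analysis on whether the swapped indices coincide (producing a merge and a drop in $\aex$) or not (producing a pure conjugation that preserves $\aex$); the rest is the routine polynomial-degree arithmetic indicated above. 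Full details, including the precise definition of the complexity statistic and the verification that it is strictly decreasing, are deferred to \cref{sec:erratum}.
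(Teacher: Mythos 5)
Your strategy is essentially the paper's: reduce to the constant function $r=\mathbf{1}$ via the flattening construction of \cref{prop:BianePolynomials} (checking that $\cyc$ and $\aex$ are preserved), then sort $\pi$ toward canonical form by adjacent (``nice Coxeter'') conjugations, tracking through \cref{lem:bad-things-that-can-happen-while-conjugating} how each commutator contributes a Biasimir in $\Sy{d-1}$ to the quantum part, with the key observation that the $N$-producing correction is accompanied by a drop of one in $\aex$. The paper organizes this as an induction on $d$ alone, with a separate termination lemma (\cref{lem:aex-is-nice-at-each-step}) for the conjugation sequence; your proposed induction on a complexity statistic is equivalent in spirit.

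However, your degree arithmetic in the penultimate paragraph has an off-by-one confusion that should be repaired. In the grading of $\Z[\hbar][N,C_1,\dots,C_{|r|}]$ adopted in the paper, the variable $\hbar$ has degree zero. Consequently the inductive hypothesis gives
\[
\deg C_{\pi_1}^{(\mathbf{1})} = \max\bigl(\deg \mathbf{P}_{\pi_1}, \deg(\hbar\,\mathbf{Q}_{\pi_1})\bigr)
= \max\bigl(\cyc\pi_1,\ \deg\mathbf{Q}_{\pi_1}\bigr)
\le \max\bigl(\cyc\pi_1,\ \aex\pi_1\bigr) = \aex\pi_1 \le \aex\pi,
\]
\emph{without} the extra $1+\aex(\pi_1)$ you wrote, and similarly $\deg\bigl(N\,C_{\pi_2}^{(\mathbf{1})}\bigr)\le 1+\aex\pi_2\le \aex\pi$ since $\aex\pi_2+1\le\aex\pi$. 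The bound $\deg\mathbf{Q}_\pi^{(\mathbf{1})}\le\aex\pi$ then follows directly; there is no ``stripping the overall $\hbar$'' step, and if one instead counted $\hbar$ with degree one, the extra factors of $\hbar$ hidden inside the nested quantum parts $\mathbf{Q}_{\pi_i}$ would make the recursion fail to close. Two smaller points: a nice Coxeter conjugation need not \emph{preserve} $\aex$ (it can strictly decrease it, as in the passage from $(1\ 3\ 2)$ to $(1\ 2\ 3)$), only fail to increase it; and the commutation dichotomy has three outcomes rather than two, with an intermediate case ($j,j+1$ not fixpoints and $\pi(j+1)\neq j$) that produces a merge without an accompanying factor of $N$ --- this case is harmless but must be listed so that the bookkeeping of \cref{lem:bad-things-that-can-happen-while-conjugating} is exhaustive.
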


\subsection{Classical/Quantum decomposition}
\newcommand{\PPA}[1]{\overline{\mathbf{P}}_{#1}^{\mathbf{A}}}
\newcommand{\QQA}[1]{\overline{\mathbf{Q}}_{#1}^{\mathbf{A}}}
\newcommand{\PPB}[1]{\overline{\mathbf{P}}_{#1}^{\mathbf{B}}}
\newcommand{\QQB}[1]{\overline{\mathbf{Q}}_{#1}^{\mathbf{B}}}

We are now ready to obtain the decomposition \eqref{eqn:ClassicalQuantumDecomposition}
of $\tau_N$ into classical and quantum parts.  Let us return to the formula 
\eqref{eq:here-we-use-classical-independence} for $\tau_N$, and consider a particular
term in the sum corresponding to the pair $({\pi_1},{\pi_2}) \in \Sy{d}^2$.  

First, by \cref{prop:BianePolynomials}, we have that
	\begin{equation*}
		S^A_{\pi_1} = \mathbf{P}_{\pi_1}^{(p)}(\Tr A_N,\dots,\Tr A_N^{|p|}) +
		\hbar \mathbf{Q}_{\pi_1}^{(p)}(\hbar,N,\Tr A_N,\dots,\Tr A_N^{|p|}).
	\end{equation*}
Let us rewrite this in terms of normalized traces.  Put
	\begin{align*}
		\PPA{{\pi_1}} &:=\frac{1}{N^{\cyc({\pi_1})}} 
			\mathbf{P}_{\pi_1}^{(p)}\left( \Tr A_N,\dots,\Tr A_N^{|p|}\right), \\
		\QQA{{\pi_1}} &:=\frac{1}{N^{\aex({\pi_1})}} 
                             \mathbf{Q}_{\pi_1}^{(p)}\left(\hbar,N,\Tr A_N,\dots,\Tr A_N^{|p|}\right).
           \end{align*}
From \eqref{eqn:ClassicalComponent}, $\PPA{{\pi_1}}$ is an
explicit polynomial in the operators
\[ \tr A_N, \tr A_N^2,\dots, \tr A_N^{|p|}, \]
while \cref{prop:Antiexceedance} implies that $\QQA{{\pi_1}}$ is a polynomial in the 
numbers $\hbar,N^{-1}$ and the operators
\begin{equation*}
\label{eq:useful-quantites-A}
 \tr A_N, \tr A_N^2,\dots, \tr A_N^{|p|}.
\end{equation*}
We thus have
	\begin{equation*}
		S^A_{\pi_1} = N^{\cyc({\pi_1})}\PPA{{\pi_1}} + \hbar N^{\aex({\pi_1})}\QQA{{\pi_1}}.
	\end{equation*}
Now we apply the expectation $\E$ to both sides of this identity in $\A_N$ to 
get an identity in $\C$.  Because traces of powers of $A_N$ are 
classically independent,
we have proved the following result.
\begin{proposition}
\label{prop:PPA-QQA-asymptoticalyO1}
Define
	\begin{align*} 
		\PPA{{\pi_1}}(N)&:=\E\PPA{{\pi_1}} =\frac{1}{N^{\cyc({\pi_1})}} 
			\mathbf{P}_{\pi_1}^{(p)}\left( \E\Tr A_N,\dots,\E\Tr A_N^{|p|}\right), \\
		\QQA{{\pi_1}}(N)&:=\E\QQA{{\pi_1}} =\frac{1}{N^{\aex({\pi_1})}} 
                             \mathbf{Q}_{\pi_1}^{(p)}\left(\hbar,N,\E\Tr A_N,\dots,\E\Tr A_N^{|p|}\right).
           \end{align*}
Then $\PPA{{\pi_1}}(N)$ is a polynomial in the numbers
\[ \E\tr A_N, \E\tr A_N^2,\dots, \E\tr A_N^{|p|}, \]
and $\QQA{{\pi_1}}(N)$ is a polynomial in the numbers 
\begin{equation*}
 \hbar,N^{-1},\E\tr A_N, \E\tr A_N^2,\dots, \E\tr A_N^{|p|}.
\end{equation*}
\end{proposition}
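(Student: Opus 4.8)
The plan is to deduce the proposition directly from \cref{prop:BianePolynomials,prop:Antiexceedance}, together with the observation that traces of powers of $A_N$ are scalar operators. First I would recall, via \cref{thm:PerelomovPopov} and the identities \eqref{eqn:SfunctionsAreImagesOfBiasimirs}, that for each $k$ the operator $\Tr A_N^k \in \A_N$ equals the scalar $\wp_k(a_1^{(N)},\dots,a_N^{(N)})\, I_{\V_N}\otimes I_{\W_N}$; in particular the family $\{\Tr A_N^k\}_k$ consists of deterministic (hence mutually classically independent) quantum random variables, so $\E$ is multiplicative on any polynomial expression in them --- equivalently, evaluating a polynomial at these scalar operators produces another scalar operator, on which $\E$ simply reads off the scalar. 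Applying this to $\PPA{{\pi_1}} = N^{-\cyc({\pi_1})}\,\mathbf{P}_{{\pi_1}}^{(p)}(\Tr A_N,\dots,\Tr A_N^{|p|})$ and to $\QQA{{\pi_1}} = N^{-\aex({\pi_1})}\,\mathbf{Q}_{{\pi_1}}^{(p)}(\hbar,N,\Tr A_N,\dots,\Tr A_N^{|p|})$ yields the two displayed formulas defining $\PPA{{\pi_1}}(N)$ and $\QQA{{\pi_1}}(N)$, since $\E$ may be moved inside and each $\E\Tr A_N^k$ is just a number.

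Next I would establish the polynomiality of the classical part. By \eqref{eqn:ClassicalComponent} and \cref{prop:BianePolynomials}, $\mathbf{P}_{{\pi_1}}^{(p)}$ is the single monomial $\prod_{j}C_{\sum_{i\in\gamma_j}p(i)}$, where $\gamma_1\cdots\gamma_{\cyc({\pi_1})}$ is the disjoint cycle decomposition of ${\pi_1}$; in particular it is homogeneous of degree $\cyc({\pi_1})$ in the variables $C_1,\dots,C_{|p|}$, in agreement with \cref{prop:Antiexceedance}. Using $\E\Tr A_N^m = N\,\E\tr A_N^m$ and this homogeneity, the factor $N^{\cyc({\pi_1})}$ produced cancels the normalizing $N^{-\cyc({\pi_1})}$, so that $\PPA{{\pi_1}}(N) = \mathbf{P}_{{\pi_1}}^{(p)}(\E\tr A_N,\dots,\E\tr A_N^{|p|})$ is literally a monomial in the numbers $\E\tr A_N,\dots,\E\tr A_N^{|p|}$.

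I would then treat the quantum part by the same substitution, tracking the difference between $N$-degree and powers of $N^{-1}$. Viewing $\mathbf{Q}_{{\pi_1}}^{(p)}$ as an element of $\Z[\hbar][N,C_1,\dots,C_{|p|}]$, \cref{prop:Antiexceedance} bounds its total degree in the weight-one variables $N,C_1,\dots,C_{|p|}$ by $\aex({\pi_1})$. Expanding it into monomials $\hbar^{e}N^{a}C_{m_1}\cdots C_{m_b}$ with $a+b\le\aex({\pi_1})$, substituting $C_m = N\,\E\tr A_N^m$, and dividing by $N^{\aex({\pi_1})}$ turns each such monomial into $\hbar^{e}\,(N^{-1})^{\aex({\pi_1})-a-b}\,\E\tr A_N^{m_1}\cdots\E\tr A_N^{m_b}$, with a nonnegative exponent on $N^{-1}$; summing over monomials exhibits $\QQA{{\pi_1}}(N)$ as a polynomial in $\hbar$, $N^{-1}$, and $\E\tr A_N,\dots,\E\tr A_N^{|p|}$, as claimed. (An identical argument would handle the analogous $B$-side quantities.)

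I do not expect a genuine obstacle here: the statement is bookkeeping layered on the already-proved \cref{prop:BianePolynomials,prop:Antiexceedance}. The one point that truly uses those results is the matching of the normalizing exponents built into the definitions of $\PPA{{\pi_1}}$ and $\QQA{{\pi_1}}$ with the weighted degrees supplied by \cref{prop:Antiexceedance}: it must be \emph{exactly} $\cyc({\pi_1})$ for the classical component, so that no power of $N$ survives and the result is a polynomial in the normalized traces alone, and \emph{at least} $\aex({\pi_1})$ for the quantum component, so that only nonnegative powers of $N^{-1}$ remain. Verifying this matching is what makes the two polynomiality assertions come out with precisely the variable lists stated in the proposition.
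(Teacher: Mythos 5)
Your proof is correct and follows the same route the paper takes: recall that each $\Tr A_N^k$ is a scalar operator (so $\E$ passes through any polynomial expression in them), use the explicit monomial form \eqref{eqn:ClassicalComponent} of $\mathbf{P}_{\pi_1}^{(p)}$ to absorb the normalization $N^{-\cyc(\pi_1)}$ after substituting $\Tr A_N^m = N\tr A_N^m$, and invoke the degree bound of \cref{prop:Antiexceedance} to show that dividing $\mathbf{Q}_{\pi_1}^{(p)}$ by $N^{\aex(\pi_1)}$ leaves only nonnegative powers of $N^{-1}$. The paper leaves this bookkeeping in the surrounding prose rather than in a displayed proof; you have simply made explicit the exact degree-matching that makes the normalizations work.
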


\noindent
We conclude that
	\begin{equation*}
		\E[S^A_{\pi_1}] = N^{\cyc({\pi_1})}\PPA{{\pi_1}}(N) + \hbar N^{\aex({\pi_1})}\QQA{{\pi_1}}(N).
	\end{equation*}

We now record the counterpart of the above for the matrix $B$;
the calculations are fully analogous to those just performed for $A$. 
We have that
	\begin{equation*}
		S^B_{{\pi_2}^{-1}\gamma} = \mathbf{P}_{{\pi_2}^{-1}\gamma}^{(q)}(\Tr B_N,\dots,\Tr B_N^{|q|}) +
		\hbar\mathbf{Q}^{(q)}_{{\pi_2}^{-1}\gamma}(\hbar,N,\Tr B_N,\dots,\Tr B_N^{|q|}).
	\end{equation*}
Once again, let us rewrite this in terms of normalized traces.  Put
	\begin{align*}
		\PPB{{\pi_2}^{-1}\gamma} &:=\frac{1}{N^{\cyc({\pi_2}^{-1}\gamma)}} 
			\mathbf{P}_{{\pi_2}^{-1}\gamma}^{(q)}\left( \Tr B_N,\dots,\Tr B_N^{|q|}\right), \\
		\QQB{{\pi_2}^{-1}\gamma} &:=\frac{1}{N^{\aex({\pi_2}^{-1}\gamma)}} 
                             \mathbf{Q}_{{\pi_2}^{-1}\gamma}^{(q)}\left(\hbar,N,\Tr B_N,\dots,\Tr B_N^{|q|}\right).
           \end{align*}
From \eqref{eqn:ClassicalComponent}, $\PPB{{\pi_2}^{-1}\gamma}$ 
is an explicit polynomial in the operators
\[ \tr B_N, \tr B_N^2,\dots, \tr B_N^{|q|}, \]
while \cref{prop:Antiexceedance} implies that $\QQB{{\pi_2}^{-1}\gamma}$ is a polynomial in the 
numbers $\hbar,N^{-1}$ and the operators
\begin{equation*}
\label{eq:useful-quantites-B1}
 \tr B_N, \tr B_N^2,\dots, \tr B_N^{|q|}.
\end{equation*}
We thus have
	\begin{equation*}
		S^B_{{\pi_2}^{-1}\gamma} = N^{\cyc({\pi_2}^{-1}\gamma)}\PPB{{\pi_2}^{-1}\gamma} + 
		\hbar N^{\aex({\pi_2}^{-1}\gamma)}\QQB{{\pi_2}^{-1}\gamma}.
	\end{equation*}
Once again, we apply $\E$ to both sides of this identity in $\A_N$ to 
get an identity in $\C$.  As above, we declare
	\begin{align*}
		\PPB{{\pi_2}^{-1}\gamma}(N)&:=\E\PPB{{\pi_2}^{-1}\gamma} =
				\frac{1}{N^{\cyc({\pi_2}^{-1}\gamma)}} 
			\mathbf{P}_{{\pi_2}^{-1}\gamma}^{(q)}\left( \E\Tr B_N,\dots,\E\Tr B_N^{|q|}\right), \\
		\QQB{{\pi_2}^{-1}\gamma}(N)&:=\E\QQB{{\pi_2}^{-1}\gamma} =
			\frac{1}{N^{\aex({\pi_2}^{-1}\gamma)}} 
                             \mathbf{Q}_{{\pi_2}^{-1}\gamma}^{(q)}
                             \left(\hbar,N,\E\Tr B_N,\dots,\E\Tr B_N^{|q|}\right).
           \end{align*}
The first of these is a polynomial in the numbers
\[ \E\tr B_N, \E\tr B_N^2,\dots, \E\tr B_N^{|q|}, \]
while the second is a polynomial in the numbers 
\begin{equation*}
 \hbar,N^{-1},\E\tr B_N, \E\tr B_N^2,\dots, \E\tr B_N^{|q|}.
\end{equation*}
We conclude that
	\begin{equation*}
		\E[S^B_{{\pi_2}^{-1}\gamma}] = N^{\cyc({\pi_2}^{-1}\gamma)}\PPB{{\pi_2}^{-1}\gamma}(N) 
		+ \hbar N^{\aex({\pi_2}^{-1}\gamma)}\QQB{{\pi_2}^{-1}\gamma}(N).
	\end{equation*}

\bigskip

Putting these two calculations together, we compute the $({\pi_1},{\pi_2})$ term of 
$\tau_N$ as
	\begin{multline*}
		\Wg_N({\pi_1},{\pi_2})\ \E[S^A_{\pi_1}]\ \E[S^B_{{\pi_2}^{-1}\gamma}] 
		= \\ \Wg_N({\pi_1},{\pi_2})
			\left( N^{\cyc({\pi_1})}\PPA{{\pi_1}}(N) + 
			\hbar N^{\aex({\pi_1})}\QQA{{\pi_1}}(N) \right) \times \\
			\times \left( N^{\cyc({\pi_2}^{-1}\gamma)}\PPB{{\pi_2}^{-1}\gamma}(N) 
				+ \hbar N^{\aex({\pi_2}^{-1}\gamma)}\QQB{{\pi_2}^{-1}\gamma}(N) \right).
	\end{multline*}        
Expanding the brackets and summing $({\pi_1},{\pi_2})$ over $\Sy{d}^2$, we arrive at the 
classical/quantum decomposition of the mixed moment $\tau_N$.
	
	\begin{theorem}
	\label{thm:ClassicalQuantumDecomposition}
	We have
		\begin{equation*}
			\tau_N = \Classical_N + \hbar_N \Quantum_N,
		\end{equation*}
	where
		\begin{multline*}
			\Classical_N = \\ \frac{1}{N}\sum_{({\pi_1},{\pi_2}) \in \Sy{d}^2} 
			N^{\cyc({\pi_1})+\cyc({\pi_2}^{-1}\gamma)}\Wg_N({\pi_1},{\pi_2})\ 
			\PPA{{\pi_1}}(N)\ \PPB{{\pi_2}^{-1}\gamma}(N)
		\end{multline*}
	and
		\begin{multline*}
			\Quantum_N =\\ \begin{aligned} \hspace{5ex} & \frac{1}{N}\sum_{({\pi_1},{\pi_2}) \in \Sy{d}^2} 
			\bigg{(}
			N^{\cyc({\pi_1})+\aex({\pi_2}^{-1}\gamma)} \Wg_N({\pi_1},{\pi_2})\
				\PPA{{\pi_1}}(N)\ \QQB{{\pi_2}^{-1}\gamma}(N) \\
			&+
				N^{\aex({\pi_1})+\cyc({\pi_2}^{-1}\gamma)}\Wg_N({\pi_1},{\pi_2})\
				\QQA{{\pi_1}}(N)\ \PPB{{\pi_2}^{-1}\gamma}(N) \\
			&+  \hbar N^{\aex({\pi_1})+\aex({\pi_2}^{-1}\gamma)}\Wg_N({\pi_1},{\pi_2})\
				\QQA{{\pi_1}}(N)\ \QQB{{\pi_2}^{-1}\gamma}(N)
			\bigg{)}.
		\end{aligned} \end{multline*}
	\end{theorem}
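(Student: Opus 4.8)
The proof will be an exercise in assembling the structural results already established in this section, so the plan is one of careful bookkeeping rather than of new ideas. The starting point is the Weingarten expansion \eqref{eq:here-we-use-classical-independence}, which already expresses $\tau_N$ as $\tfrac{1}{N}$ times a sum over pairs $(\pi_1,\pi_2)\in\Sy{d}^2$ of the product $\Wg_N(\pi_1,\pi_2)\,\E[S^A_{\pi_1}]\,\E[S^B_{\pi_2^{-1}\gamma}]$. The entire task then reduces to resolving each of the two expectations $\E[S^A_{\pi_1}]$ and $\E[S^B_{\pi_2^{-1}\gamma}]$ into a sum of a ``classical'' term and an ``$\hbar$ times quantum'' term, substituting, and multiplying out.

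To resolve $\E[S^A_{\pi_1}]$, I would recall from \eqref{eqn:SfunctionsAreImagesOfBiasimirs} that $S^A_{\pi_1}$ is the image of the Biasimir $C^{(p)}_{\pi_1}$ under $\rho_N$, tensored with the identity of $\W_N$, and invoke \cref{prop:BianePolynomials} to write $C^{(p)}_{\pi_1}$, hence $S^A_{\pi_1}$, as $\mathbf{P}^{(p)}_{\pi_1}(\Tr A_N,\dots,\Tr A_N^{|p|}) + \hbar\,\mathbf{Q}^{(p)}_{\pi_1}(\hbar,N,\Tr A_N,\dots,\Tr A_N^{|p|})$, using that $\rho_N$ is an algebra map and that $\Tr A_N^j=\rho_N(C_j)\otimes I_{\W_N}$. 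I would then normalize by the power of $N$ dictated by \cref{prop:Antiexceedance}: dividing the $\mathbf{P}$-part by $N^{\cyc(\pi_1)}$ and the $\mathbf{Q}$-part by $N^{\aex(\pi_1)}$ produces exactly the normalized polynomials $\PPA{\pi_1}$ and $\QQA{\pi_1}$, which by \eqref{eqn:ClassicalComponent} and \cref{prop:Antiexceedance} are honest polynomials in the normalized traces $\tr A_N^j$, respectively in $\hbar$, $N^{-1}$ and those traces. Finally, since each $\Tr A_N^j$ is a scalar operator with value prescribed by Perelomov--Popov (\cref{thm:PerelomovPopov}), applying $\E$ simply evaluates these polynomials at the numbers $\E\tr A_N^j$, yielding $\E[S^A_{\pi_1}] = N^{\cyc(\pi_1)}\PPA{\pi_1}(N) + \hbar\,N^{\aex(\pi_1)}\QQA{\pi_1}(N)$. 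Running the identical argument with $B$, $\pi_2^{-1}\gamma$ and $q$ in place of $A$, $\pi_1$ and $p$ gives the analogous identity for $\E[S^B_{\pi_2^{-1}\gamma}]$.

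It then remains to substitute these two binomial expressions into \eqref{eq:here-we-use-classical-independence}, expand the product into four terms, and regroup. The single term carrying no power of $\hbar$, namely $\tfrac{1}{N}\,N^{\cyc(\pi_1)+\cyc(\pi_2^{-1}\gamma)}\Wg_N(\pi_1,\pi_2)\,\PPA{\pi_1}(N)\,\PPB{\pi_2^{-1}\gamma}(N)$ summed over $\Sy{d}^2$, is declared to be $\Classical_N$. Each of the three remaining terms carries at least one factor of $\hbar$; pulling a single $\hbar$ out front, the two cross terms contribute $N^{\cyc(\pi_1)+\aex(\pi_2^{-1}\gamma)}\Wg_N(\pi_1,\pi_2)\,\PPA{\pi_1}(N)\,\QQB{\pi_2^{-1}\gamma}(N)$ and $N^{\aex(\pi_1)+\cyc(\pi_2^{-1}\gamma)}\Wg_N(\pi_1,\pi_2)\,\QQA{\pi_1}(N)\,\PPB{\pi_2^{-1}\gamma}(N)$, while the quartic term retains one surviving factor of $\hbar$ alongside $N^{\aex(\pi_1)+\aex(\pi_2^{-1}\gamma)}\Wg_N(\pi_1,\pi_2)\,\QQA{\pi_1}(N)\,\QQB{\pi_2^{-1}\gamma}(N)$; dividing by $N$ and summing over $\Sy{d}^2$ reproduces precisely the displayed formula for $\hbar\,\Quantum_N$. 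I do not expect a genuine obstacle at this final stage: all the substance --- the existence and polynomial structure of the Biasimir expansion, and in particular the antiexceedance bound on the degree of its quantum component, together with the corrected form of Biane's lemma referenced in \cref{sec:erratum} --- lies upstream, and the only point demanding real care here is keeping the exponents of $N$ and the powers of $\hbar$ consistent when the two binomials are multiplied and summed over $\Sy{d}^2$.
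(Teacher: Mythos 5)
Your proposal is correct and follows essentially the same route as the paper: expand $\tau_N$ via the Weingarten formula \eqref{eq:here-we-use-classical-independence}, resolve each of $\E[S^A_{\pi_1}]$ and $\E[S^B_{\pi_2^{-1}\gamma}]$ into its classical-plus-$\hbar$-quantum form using \cref{prop:BianePolynomials}, \cref{prop:Antiexceedance}, and the scalar-operator property from \cref{thm:PerelomovPopov}, then multiply the two binomials and regroup by powers of $\hbar$. The one small remark is that the paper justifies passing $\E$ through the polynomial by invoking classical independence of the operators $\Tr A_N^j$ (\cref{prop:PPA-QQA-asymptoticalyO1}), whereas you appeal to their being scalar operators; both observations are true and suffice here, so this is a cosmetic rather than substantive difference.
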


\section{The (counter)example}
\label{sec:counterexample}

Before commencing a general analysis of the $N\to\infty$ limit of $\tau_N$,
let us examine a specific, concrete example.

Consider the specific choice of the polynomial in \eqref{eqn:QuantumMixedMoment}
given by
\begin{equation}
\label{eq:our-counterexample} 
\tau_N = \E\tr( A_N^r B_N^s A_N B_N A_N B_N)
\end{equation}
for some integers $r,s\geq 0$. 
This choice corresponds to $d=3$, $p=(r,1,1)$, $q=(s,1,1)$.
The Reader may fast-forward to \cref{sec:counterexample-conclusion}
where the quantum part of this particular $\tau_N$ is explicitly presented.

\subsection{Calculations}
Thanks to \eqref{eq:here-we-use-classical-independence} combined with
\eqref{eqn:SfunctionsAreImagesOfBiasimirs}, the quantity $\tau_N$
\[
\tau_N=\frac{1}{N} \sum_{{\pi_1},{\pi_2} \in \Sy{3}} \Wg_N({\pi_1},{\pi_2})
 		\ \tr_{\V_N} \rho_N(C_{{\pi_1}}^{(p)})\ \tr_{\W_N}  \sigma_N(C_{{\pi_2}^{-1}\gamma}^{(q)})   \]
can be 
expressed in terms of the Biasimirs $C^{(p)}_{\pi}$ and $C^{(q)}_{\pi}$
over the six permutations $\pi\in\Sy{3}$.
Four of these permutations are in the canonical form \eqref{eq:canonical} 
and the corresponding values of  $C^{(p)}_{\pi}$ and $C^{(q)}_{\pi}$
are given simply by \eqref{eqn:ClassicalComponent}.
The transposition $\pi=(1\ 3)$ is not canonical but a short thought
shows that also in this case a version of the formula \eqref{eqn:ClassicalComponent} applies.
The only more challenging case is $\pi=(1\ 3\ 2)$ which was already considered in
\cref{example:Biasimir-bad}. Thus we have 
\begin{equation}
\label{eq:casimir-concrete}
\left\{
\begin{aligned}
C^{(p)}_{\operatorname{id}} &= C_r C_1^2, & 
C^{(p)}_{(1\ 2)} &= C_{r+1} C_1, \\
C^{(p)}_{(1\ 3)} &= C_{r+1} C_1,          &
C^{(p)}_{(2\ 3)} &= C_r C_2,    \\
C^{(p)}_{(1\ 2\ 3)} &= C_{r+2},           &
C^{(p)}_{(1\ 3\ 2)} &= C_{r+2}+ \hbar_N C_r C_1 - \hbar_N N\  C_{r+1};
\end{aligned}
\right.
\end{equation}
analogous formulas give the values of $C_{\pi}^{(q)}$.

If we come back to the original formula \eqref{eq:here-we-use-classical-independence},
$\tau_N$ is expressed in terms of the quantities $S^A_\pi$ and $S^B_\pi$ 
which are even better suited for the purposes of asymptotic problems. In this context \eqref{eq:casimir-concrete}
becomes
\begin{equation}
\label{eq:casimir-concrete-2}
\left\{
\begin{aligned}
S^A_{\operatorname{id}} &= N^3 \tr A^r \left(\tr A \right)^2, & 
S^A_{(1\ 2)} &=            N^2 \tr A^{r+1} \tr A, \\
S^A_{(1\ 3)} &= N^2 \tr A^{r+1} \tr A^1,          &
S^A_{(2\ 3)} &= N^2 \tr A^r \tr A^2,    \\
S^A_{(1\ 2\ 3)} &=N  \tr A^{r+2},           \\
S^A_{(1\ 3\ 2)} &=N \tr A^{r+2}+ \hbar_N N^2 \tr A^r \tr A - \hbar_N N^2  \tr A^{r+1},
\hspace{-20ex}
\end{aligned}
\right.
\end{equation}
with $S^B_{\pi}$ given by analogous formulas.

The values of the Weingarten function are explicitly known
rational functions in $N$:
\begin{equation}
\label{eq:Weingarten-3}
 \Wg_N({\pi_1},{\pi_2}) = 
\begin{cases}
\frac{N^2-2}{N(N^2-1)(N^2-4)} & \text{if $\pi_1 \pi_2^{-1}=\operatorname{id}$}, \\
\frac{-1}{(N^2-1)(N^2-4)}     & \text{if $\pi_1 \pi_2^{-1}$ is a transposition}, \\
\frac{2}{N(N^2-1)(N^2-4)}     & \text{if $\pi_1 \pi_2^{-1}$ is a cycle of length $3$}.   
\end{cases}  
\end{equation}

An application of a computer algebra system to 
\eqref{eq:here-we-use-classical-independence} with the data
\eqref{eq:casimir-concrete-2} and \eqref{eq:Weingarten-3} 
gives an explicit but complicated formula for $\tau_N$
as a polynomial in the indeterminates 
\begin{equation}
\label{eq:variables}
\tr A, \tr A^2, \tr A^{r}, \tr A^{r+1}, \tr A^{r+2},
\tr B, \tr B^2, \tr B^{s}, \tr B^{s+1}, \tr B^{s+2}, \hbar_N
\end{equation}
and coefficients in the field $\mathbb{Q}(N)$ of rational functions in $N$.
The limit $\lim_{N\to\infty} \tau_N$ turns out to be a polynomial in the indeterminates
\eqref{eq:variables} with integer coefficients
which involves $14$ monomials.
Ten of these monomials do not involve the Planck constant $\hbar_N$;
it follows that
with respect to the decomposition \eqref{eqn:ClassicalQuantumDecomposition}
they correspond to the classical part $\Classical_N$.
The remaining four monomials which are divisible by $\hbar_N$
correspond to the quantum part $\Quantum_N$.
We shall review them in the following.

\subsection{The conclusion}
\label{sec:counterexample-conclusion}

By \cref{thm:Main} which we are about to prove 
(it follows also by direct inspection), the classical part 
$\Classical_N$ of $\tau_N$ with respect to the decomposition
\eqref{eqn:ClassicalQuantumDecomposition}
corresponds to the
terms given by free probability theory. 

Much more mysterious is the 
quantum part which in our case turns out to be given by
\[
\Quantum_N = \hbar_N \left(  \tr A^{r+1} - \tr A \tr A^r  \right) 
\left(  \tr B^{s+1} - \tr B \tr B^s  \right).
\]
From our perspective it is important that 
this quantum part is clearly non-zero as soon as $r,s\geq 1$.
This shows that the assumption that $\hbar_N\to 0$
is indeed necessary in \cref{thm:Main} in order to have asymptotic freeness.

\medskip

Finally, we would like to point out that $\tau_N$ 
given by \eqref{eq:our-counterexample} is the simplest example
for which $\lim_{N\to\infty} \Quantum_N \neq 0$.
In fact, for any alternating product of four factors 
\[ \tau_N = \E\tr( A_N^r B_N^s A_N^t B_N^u) \]
with integers $r,s,t,u\geq 0$,
the corresponding quantum part is identically zero.
An interesting question, at present unresolved, is the following.

\begin{problem}
How big can the quantum part be?
We have seen in the above example that
$\Quantum_N=\Theta\left( \hbar_N \right)$ can occur;
\cref{cor:SemiclassicalAsymptotics} gives the much weaker bound
$\Quantum_N=O(1)$.
Are there examples for which this bound is saturated and 
$\Quantum_N=\Theta(1)$?
\end{problem}

\section{Mean Value Asymptotics}
\label{sec:MeanValueAsymptotics}
In this Section, we apply the exact results obtained in \cref{sec:MeanValuesPlanckScale}
to analyze the asymptotic behavior of the mixed moment $\tau_N$ 
in the limit where $N \to \infty$ and $\hbar_N \to 0$.  
We adopt the hypotheses of \cref{thm:Main}, which is to say that 
we henceforth assume the limits
	\begin{equation*}
		s_k:=\lim_{N \to \infty}\E\tr(A_N^k) \quad\text{ and }\quad t_k:=
		\lim_{N \to \infty}\E\tr(B_N^k)
	\end{equation*}
exist for each fixed $k \in \N^*$.

\subsection{The Weingarten function}
A key component of our asymptotic analysis will be an absolutely convergent series 
expansion for the Weingarten function which renders its asymptotic behavior transparent.

In order to state this expansion,
let us identify the symmetric group $\Sy{d}$ with its right Cayley graph, as generated by the conjugacy 
class of transpositions.  We denote by $|\cdot|$ the corresponding word norm, so 
that $|{\pi_1}^{-1}{\pi_2}|$ is the graph theory distance from ${\pi_1}$ to ${\pi_2}$, i.e.~the 
length of a geodesic path in the Cayley graph joining these two permutations.  
Equip the Cayley graph with the \emph{Biane--Stanley edge
labeling}, in which each edge corresponding to the transposition $(s\ t)$ is marked by
$t$, the larger of the two elements interchanged.  This edge labeling
was introduced in the context of enumerative combinatorics 
by Stanley \cite{Stanley} and Biane \cite{Biane2002} as a tool to 
relate parking functions and noncrossing partitions.  \cref{fig:Cayley} shows 
$\Sy{4}$ with the Biane-Stanley labeling, where $2$-edges are drawn in blue,
$3$-edges in yellow, and $4$-edges in red.  

A walk on $\Sy{d}$ is said to be 
\emph{monotone} if the labels of the edges it traverses form a weakly increasing
sequence.  The fundamental fact we need \cite{Novak2010,MatsumotoNovak2013} 
is that the Weingarten function expands as a generating function for monotone walks: we have
	\begin{equation}
		\label{eqn:WeingartenRaw}
		\Wg_N({\pi_1},{\pi_2}) = \frac{1}{N^d} \sum_{r=0}^\infty (-1)^r
		\frac{\vec{W}^r({\pi_1},{\pi_2})}{N^r},
	\end{equation}
where $\vec{W}^r({\pi_1},{\pi_2})$ is the number of $r$-step monotone walks on
$\Sy{d}$ which begin at the permutation ${\pi_1}$ and end at the permutation ${\pi_2}$.
This series is absolutely convergent provided $N \geq d$, but divergent if 
$N < d$ (this divergence is a related to the \emph{De Wit--'t Hooft 
anomalies} in $\group{U}(N)$ lattice gauge theory, see e.g.~\cite{DeWittHooft,Morozov1,Samuel}). 

\begin{figure}
	\includegraphics{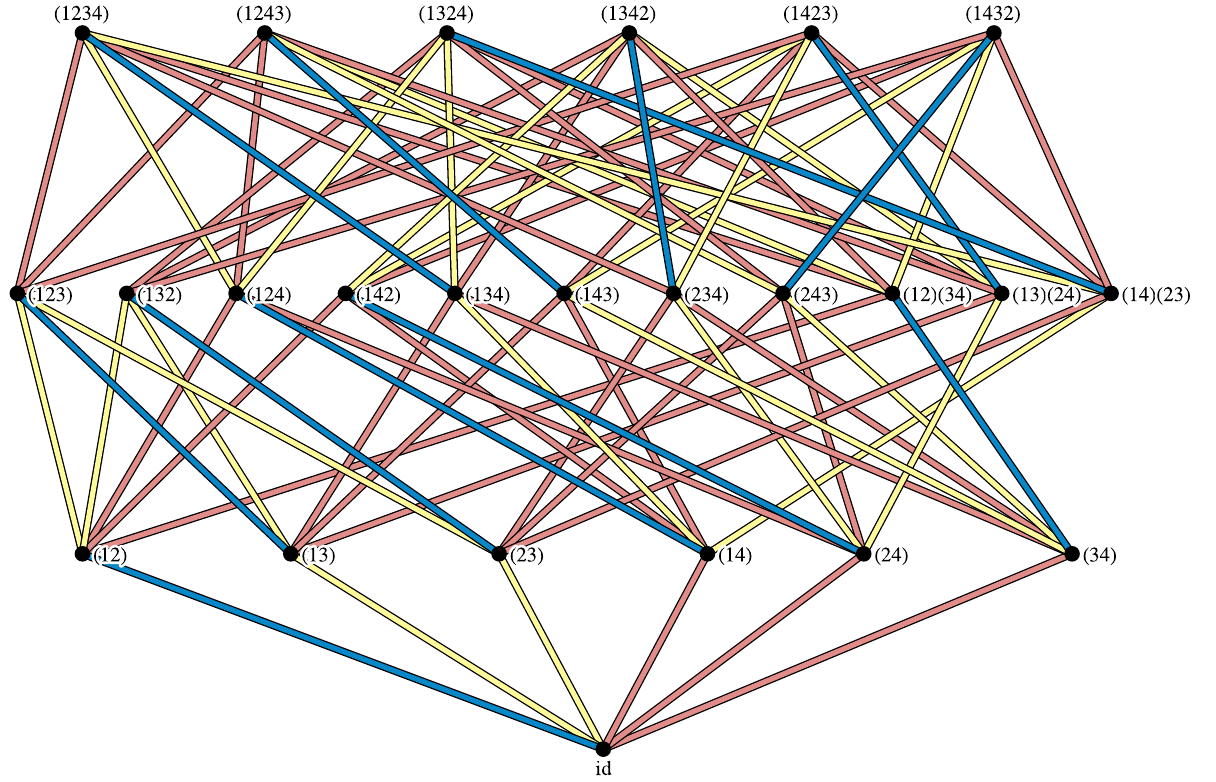}
	\caption{\label{fig:Cayley} $\Sy{4}$ with the Biane--Stanley
	edge-labeling (figure by M.~LaCroix).}
\end{figure}

Since $\vec{W}^r({\pi_1},{\pi_2}) =  \vec{W}^r(\id,{\pi_1}^{-1}{\pi_2})$, and since
every permutation is either even or odd, the number $\vec{W}^r({\pi_1},{\pi_2})$ is nonzero if and only if 
$r=|{\pi_1}^{-1}{\pi_2}|+2g$ for some $g \in \N$.  We may thus rewrite \eqref{eqn:WeingartenRaw}
as 
	\begin{equation}
		\label{eqn:WeingartenReduced}
		\Wg_N({\pi_1},{\pi_2}) = \frac{(-1)^{|{\pi_1}^{-1}{\pi_2}|}}{N^{d+|{\pi_1}^{-1}{\pi_2}|}}
		\sum_{g=0}^\infty \frac{\vec{W}_g({\pi_1},{\pi_2})}{N^{2g}},
	\end{equation}
where $\vec{W}_g({\pi_1},{\pi_2}) := \vec{W}^{|{\pi_1}^{-1}{\pi_2}|+2g}({\pi_1},{\pi_2})$.
The formulas \eqref{eqn:WeingartenConvolutionFormula} and 
\eqref{eqn:WeingartenReduced} may be effectively combined to yield a sort of Feynman calculus
for unitary matrix integrals, in which the role of Feynman diagrams is played
by monotone walks on symmetric groups, see e.g.~\cite{GGN4,GGN5}.

\subsection{Quantum asymptotics}
	We now show that the quantum part of $\tau_N$ can be controlled
	even for $\hbar_N=\hbar$ fixed.  In order to do this, we introduce a new
	permutation statistic defined by 
		\begin{equation*}
		\label{eqn:Defect}
			\defect(\pi) := \aex(\pi) - \cyc(\pi), \quad \pi \in \Sy{d}.
		\end{equation*}
	Moreover, for any $k\in \N^*$ and ${\pi_1},\dots,\pi_k \in \Sy{d}$, the quantity
	\begin{equation*}
		\label{eqn:Genus}
	\genus({\pi_1},\dots,\pi_k):=\frac{|{\pi_1}| + \dots + |\pi_k| - |{\pi_1}\dots\pi_k|}{2}
	\end{equation*}
is a nonnegative integer; we
refer to it as the \emph{genus} of the $k$-tuple $({\pi_1},\dots,\pi_k)$.
		
The following combinatorial result is a minor extension of the 
original results of Biane \cite[page 173]{Biane98}, see \cref{sec:proof-of-lem:Defect} for a
detailed proof and discussion of the relation to Biane's work.

	\begin{lemma}
	\label{lem:Defect}
		For any $\pi \in \Sy{d}$, we have
			\begin{equation*}
				\defect(\pi) \geq 0.
			\end{equation*}

		Moreover, for any permutations ${\pi_1},{\pi_2} \in \Sy{d}$, we have
			\begin{equation}
				\label{eq:defects-nice}
				\defect({\pi_1}) + \defect({\pi_2}^{-1}\gamma) \leq 2 
				\genus({\pi_1},{\pi_1}^{-1}{\pi_2},{\pi_2}^{-1}\gamma).
			\end{equation}
	\end{lemma}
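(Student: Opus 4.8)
The plan is to prove the two assertions of \cref{lem:Defect} separately, since they are of a somewhat different nature: the first is a statement about a single permutation, while the second couples two permutations through the genus of a triple.

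\textbf{Step 1: Nonnegativity of the defect.}
First I would establish $\defect(\pi) = \aex(\pi) - \cyc(\pi) \geq 0$ for every $\pi \in \Sy{d}$. The natural approach is to argue cycle by cycle: the statistic $\cyc$ is additive over disjoint cycles, and so is $\aex$ (an index $i$ with $\pi(i) \leq i$ lies in the cycle containing $i$, and the antiexceedance condition only involves that cycle), hence it suffices to show that a single cycle of length $\ell$ has at least one antiexceedance. This is clear: in any cyclic permutation of a finite totally ordered set, the largest element $m$ of the cycle satisfies $\pi(m) < m$ (it cannot map to something larger since $m$ is the max, and it cannot be a fixed point since the cycle has length $\geq 2$), so each nontrivial cycle contributes at least one antiexceedance, while a fixed point $i$ has $\pi(i)=i \leq i$ and so is also an antiexceedance. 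Thus $\aex(\pi) \geq \cyc(\pi)$. This also tells us which permutations have defect zero — those whose every cycle has exactly one antiexceedance — which may be worth recording for later use (e.g.\ in the quantum asymptotics).

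\textbf{Step 2: The genus inequality.}
For the second assertion, the key is the identity ${\pi_1} \cdot ({\pi_1}^{-1}{\pi_2}) \cdot ({\pi_2}^{-1}\gamma) = \gamma$, so that $\genus({\pi_1}, {\pi_1}^{-1}{\pi_2}, {\pi_2}^{-1}\gamma) = \tfrac12\big(|{\pi_1}| + |{\pi_1}^{-1}{\pi_2}| + |{\pi_2}^{-1}\gamma| - |\gamma|\big)$, and $|\gamma| = d-1$. I would try to reduce \eqref{eq:defects-nice} to a per-permutation bound relating $\defect$ to the word norm $|\cdot| = d - \cyc(\cdot)$, combined with a geodesic/triangle-type estimate. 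Concretely: since $\defect(\sigma) = \aex(\sigma) - \cyc(\sigma)$ and $|\sigma| = d - \cyc(\sigma)$, one has $\defect(\sigma) = \aex(\sigma) - d + |\sigma|$. Summing this for $\sigma = {\pi_1}$ and $\sigma = {\pi_2}^{-1}\gamma$ gives
\begin{equation*}
\defect({\pi_1}) + \defect({\pi_2}^{-1}\gamma) = \aex({\pi_1}) + \aex({\pi_2}^{-1}\gamma) - 2d + |{\pi_1}| + |{\pi_2}^{-1}\gamma|.
\end{equation*}
So the claim \eqref{eq:defects-nice} becomes, after substituting the genus formula,
\begin{equation*}
\aex({\pi_1}) + \aex({\pi_2}^{-1}\gamma) \leq 2d - (d-1) + |{\pi_1}^{-1}{\pi_2}| = d + 1 + |{\pi_1}^{-1}{\pi_2}|.
\end{equation*}
I would then aim to prove this last inequality directly. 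One route: bound $\aex({\pi_1}) \leq d$ trivially and separately control $\aex({\pi_2}^{-1}\gamma)$ in terms of how far ${\pi_2}$ is from $\gamma$; but that crude bound is presumably too weak, so the real content is a more careful combinatorial argument — likely Biane's original one on page 173 of \cite{Biane98} — tracking how antiexceedances of ${\pi_1}$ and of ${\pi_2}^{-1}\gamma$ interact via the common "middle" permutation ${\pi_1}^{-1}{\pi_2}$. A clean way to organize this is to think of the three permutations as the boundary monodromies of a branched cover / ribbon graph whose genus is exactly $\genus({\pi_1}, {\pi_1}^{-1}{\pi_2}, {\pi_2}^{-1}\gamma)$, and to interpret $\aex$ as counting certain faces or descents in a planar-type diagram, so that the defect is a defect-from-planarity which is absorbed by twice the genus.

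\textbf{Expected main obstacle.}
Step 1 is routine. The hard part is Step 2, and specifically getting the constant right: the inequality $\aex({\pi_1}) + \aex({\pi_2}^{-1}\gamma) \leq d + 1 + |{\pi_1}^{-1}{\pi_2}|$ is tight (equality can occur, e.g.\ when everything is "canonical"), so any slack in the estimate must be recovered exactly, which rules out lazy bounding. I expect the cleanest proof to go through a careful induction on $|{\pi_1}^{-1}{\pi_2}|$: when ${\pi_1} = {\pi_2}$ the inequality should reduce to the per-cycle analysis of Step 1 applied to ${\pi_1}$ and $\gamma$ with the canonical structure, and each elementary step ${\pi_2} \to {\pi_2}(s\ t)$ changes the left side by a controlled amount (at most $1$, by a case analysis on whether the transposition merges or splits a cycle and whether it creates or destroys an antiexceedance) while changing $|{\pi_1}^{-1}{\pi_2}|$ by exactly $1$. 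Making that step-by-step bookkeeping airtight — handling the merge/split cases and the effect on antiexceedances simultaneously — is where the real work lies; I would follow \cite{Biane98} closely here and flag in \cref{sec:proof-of-lem:Defect} exactly where our treatment differs from or corrects his.
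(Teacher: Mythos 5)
Your Step~1 is correct and is exactly the paper's argument: each cycle of $\pi$ (including fixed points) contains its maximal element, which is an antiexceedance, so $\aex(\pi)\geq\cyc(\pi)$.

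Your algebraic reduction in Step~2 is also correct and matches the paper's route: unwinding $\defect(\sigma)=\aex(\sigma)-d+|\sigma|$ and $2\genus = |{\pi_1}|+|{\pi_1}^{-1}{\pi_2}|+|{\pi_2}^{-1}\gamma|-(d-1)$ converts \eqref{eq:defects-nice} into
\begin{equation*}
\aex({\pi_1}) + \aex({\pi_2}^{-1}\gamma) \leq d + 1 + |{\pi_1}^{-1}{\pi_2}|,
\end{equation*}
which is precisely the inequality the paper establishes. However, after this reduction you stop, and the way you propose to fill the gap would not work. You suggest an induction on $|{\pi_1}^{-1}{\pi_2}|$ whose base case ${\pi_1}={\pi_2}$ ``reduces to the per-cycle analysis of Step~1.'' This is incorrect: the base case is the bound $\aex({\pi_1})+\aex({\pi_1}^{-1}\gamma)\leq d+1$, which is an \emph{upper} bound on a sum of antiexceedance counts, whereas Step~1 only gives the \emph{lower} bound $\aex\geq\cyc$. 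The actual content of the base case is a separate exact identity, Biane's \cite[Lemma~8.2(1)]{Biane98}, namely $\aex(\gamma\tau^{-1})+\aex(\tau)=d+1$ for every $\tau\in\Sy{d}$; setting $\tau={\pi_1}^{-1}\gamma$ gives $\aex({\pi_1})+\aex({\pi_1}^{-1}\gamma)=d+1$. (This identity is provable by a short argument: for $j<d$ the index $j$ is an antiexceedance of $\tau$ if and only if $\tau(j)$ is \emph{not} an antiexceedance of $\gamma\tau^{-1}$, while $j=d$ is always an antiexceedance of $\tau$ and $\tau(d)$ is always one of $\gamma\tau^{-1}$.) The inductive step you hand-wave is Biane's \cite[Lemma~8.2(2)]{Biane98}, the Lipschitz bound $\aex(\sigma)-\aex(\tau)\leq|\sigma^{-1}\tau|$, which does need a case analysis showing that right-multiplication by a transposition changes $\aex$ by at most one. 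The paper's proof simply adds these two lemmas of Biane sideways and substitutes $\tau:={\pi_1}^{-1}\gamma$, $\sigma:={\pi_2}^{-1}\gamma$; without identifying those two ingredients explicitly, your sketch has a genuine gap precisely where you acknowledge ``the real work lies.''
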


	\begin{theorem}
	\label{thm:QuantumAsymptotics}
		If $\hbar_N=O(1)$ as $N \to \infty$, then
		$\Quantum_N=O(1)$ as $N \rightarrow \infty$.
	\end{theorem}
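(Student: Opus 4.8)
The plan is to combine the explicit classical/quantum decomposition from \cref{thm:ClassicalQuantumDecomposition} with the Weingarten expansion \eqref{eqn:WeingartenReduced} and the degree bounds from \cref{prop:Antiexceedance} and \cref{lem:Defect}. Recall that $\Quantum_N$ is a sum over $({\pi_1},{\pi_2}) \in \Sy{d}^2$ of three types of terms, each of the shape
\[
	N^{a({\pi_1}) + b({\pi_2}^{-1}\gamma) - 1}\ \Wg_N({\pi_1},{\pi_2})\ (\text{a bounded quantity}),
\]
where $(a,b)$ is one of $(\aex,\cyc)$, $(\cyc,\aex)$, or $(\aex,\aex)$, and the ``bounded quantity'' is a product of two of $\PPA{{\pi_1}}(N),\QQA{{\pi_1}}(N),\PPB{{\pi_2}^{-1}\gamma}(N),\QQB{{\pi_2}^{-1}\gamma}(N)$ (possibly with an extra $\hbar$ in the $(\aex,\aex)$ case). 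By \cref{prop:PPA-QQA-asymptoticalyO1}, each of these four normalized quantities is a polynomial in $\hbar, N^{-1}$ and the pure moments $\E\tr A_N^j, \E\tr B_N^j$; under the hypothesis $\hbar_N = O(1)$ and the standing assumption that the pure moments converge, each of them is $O(1)$ as $N \to \infty$. So the whole problem reduces to showing that the $N$-power in front of $\Wg_N$ does not blow up.

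First I would insert the Weingarten expansion \eqref{eqn:WeingartenReduced}: since $N \geq d$ eventually, the series $\sum_g \vec{W}_g({\pi_1},{\pi_2}) N^{-2g}$ converges and is $O(1)$ (indeed $1 + O(N^{-2})$), so the only relevant factor from $\Wg_N$ is $N^{-d - |{\pi_1}^{-1}{\pi_2}|}$. Thus the exponent of $N$ controlling a term of the first type (with $(a,b) = (\aex,\cyc)$) is
\[
	\aex({\pi_1}) + \cyc({\pi_2}^{-1}\gamma) - 1 - d - |{\pi_1}^{-1}{\pi_2}|,
\]
and similarly for the other two types with $\aex$ and $\cyc$ permuted, and with an extra $-1$ (harmless) in the $(\aex,\aex)$ case coming from the leftover $\hbar$. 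I would then rewrite $\aex = \cyc + \defect$ and use the elementary identities $\cyc(\sigma) + |\sigma| = d$ for all $\sigma \in \Sy{d}$ (applied to $\sigma = {\pi_1}$ and to $\sigma = {\pi_2}^{-1}\gamma$) together with the genus identity $|{\pi_1}| + |{\pi_1}^{-1}{\pi_2}| + |{\pi_2}^{-1}\gamma| - |\gamma| = 2\genus({\pi_1},{\pi_1}^{-1}{\pi_2},{\pi_2}^{-1}\gamma)$ and $|\gamma| = d-1$. Carrying this bookkeeping through, the exponent in the first type of term becomes
\[
	\defect({\pi_1}) + \defect({\pi_2}^{-1}\gamma) - 2\genus({\pi_1},{\pi_1}^{-1}{\pi_2},{\pi_2}^{-1}\gamma),
\]
and the same expression (up to the harmless extra $-1$) governs all three types, because replacing one $\aex$ by $\cyc$ just drops the corresponding $\defect$ term, which is nonnegative by \cref{lem:Defect} and hence only helps. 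Now \eqref{eq:defects-nice} of \cref{lem:Defect} says exactly that this exponent is $\leq 0$. Therefore every term in $\Quantum_N$ is $O(1)$, and since there are only $|\Sy{d}|^2$ terms (a fixed number), $\Quantum_N = O(1)$.

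The main obstacle — and the only step requiring genuine care — is getting the exponent bookkeeping exactly right, in particular making sure the factor of $1/N$ out front of the whole sum, the $N^{-d-|{\pi_1}^{-1}{\pi_2}|}$ from the Weingarten function, and the $\cyc$/$\aex$ exponents combine to reproduce precisely the left-hand side of \eqref{eq:defects-nice} minus its right-hand side. I would double-check this on the concrete $d=3$ computation of \cref{sec:counterexample}, where one sees the sharp case $\defect + \defect = 2\genus$ realized (yielding the surviving $\Theta(\hbar_N)$ term), confirming that the inequality in \cref{lem:Defect} is the binding constraint. A secondary point to be careful about is that $\PPA{{\pi_1}}(N)$ etc.\ are defined with a normalization ($N^{-\cyc}$ or $N^{-\aex}$) designed precisely so that they are $O(1)$; I would make this explicit by citing \cref{prop:PPA-QQA-asymptoticalyO1} and noting that convergence of $\E\tr A_N^k$ and $\E\tr B_N^k$ plus $\hbar_N = O(1)$ forces boundedness of these polynomial expressions. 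No finer information about the Weingarten function or about cancellations among the $|\Sy{d}|^2$ terms is needed — crude term-by-term absolute bounds suffice, which is why the hypothesis $\hbar_N = O(1)$ (rather than $o(1)$) is enough here.
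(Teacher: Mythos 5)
Your proposal is correct and follows essentially the same route as the paper: decompose $\Quantum_N$ via \cref{thm:ClassicalQuantumDecomposition}, bound the normalized moment factors by \cref{prop:PPA-QQA-asymptoticalyO1}, insert the Weingarten expansion \eqref{eqn:WeingartenReduced} to turn the $N$-power into $N^{\defect+\defect-2\genus}$, and close with \cref{lem:Defect}. One small bookkeeping slip: for the type $(a,b)=(\aex,\cyc)$ the exponent works out to $\defect({\pi_1})-2\genus({\pi_1},{\pi_1}^{-1}{\pi_2},{\pi_2}^{-1}\gamma)$, not $\defect({\pi_1})+\defect({\pi_2}^{-1}\gamma)-2\genus$ as you wrote, but this is harmless since $\defect({\pi_2}^{-1}\gamma)\geq 0$ makes the true exponent only smaller, which is exactly the monotonicity you invoke a sentence later.
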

	
	\begin{proof}
		By \cref{thm:ClassicalQuantumDecomposition},
		the quantum part of $\tau_N$ may be 
		written as 
			\begin{equation}
			    \label{eq:quantum-sum}
			\Quantum_N = \sum_{({\pi_1},{\pi_2}) \in \Sy{d}^2} 
			N^{\cyc({\pi_1})+\cyc({\pi_2}^{-1}\gamma)-1} \Wg_N({\pi_1},{\pi_2})
			\ \mathbf{R}_{({\pi_1},{\pi_2})}(N),
			\end{equation}
		where 
			\begin{align*}
			\mathbf{R}_{({\pi_1},{\pi_2})}(N) =&
			N^{\defect({\pi_2}^{-1}\gamma)}
				\ \PPA{{\pi_1}}(N)\ \QQB{{\pi_2}^{-1}\gamma}(N) \\
				&+N^{\defect({\pi_1})}
				\ \QQA{{\pi_1}}(N)\ \PPB{{\pi_2}^{-1}\gamma}(N)\\
			&+\hbar_N N^{\defect({\pi_1})+\defect({\pi_2}^{-1}\gamma)}
				\ \QQA{{\pi_1}}(N)\ \QQB{{\pi_2}^{-1}\gamma}(N).
			\end{align*}
		We will show that each term in the sum \eqref{eq:quantum-sum} is $O(1)$.

		By the first part of \cref{lem:Defect}, nonnegativity of the defect statistic,
                as well as by \cref{prop:PPA-QQA-asymptoticalyO1}
                which shows that
		$\PPA{{\pi_1}}(N)=O(1)$, $\QQA{{\pi_1}}(N)=O(1)$ and its counterpart
                for the matrix $B$
		we have
			\begin{equation*}
				\mathbf{R}_{({\pi_1},{\pi_2})}(N) = 
				O\left(N^{\defect({\pi_1})+\defect({\pi_2}^{-1}\gamma)}\right)
			\end{equation*}
		for each $({\pi_1},{\pi_2}) \in \Sy{d}^2$.  
		
		Now, let us
		consider the order of the factor
			\begin{equation*}
				N^{\cyc({\pi_1})+\cyc({\pi_2}^{-1}\gamma)-1} \Wg_N({\pi_1},{\pi_2}).
			\end{equation*}
		Invoking the expansion \eqref{eqn:WeingartenReduced}, for 
		any $N \geq d$ we have
			\begin{multline*}
				N^{\cyc({\pi_1})+\cyc({\pi_2}^{-1}\gamma)-1} \Wg_N({\pi_1},{\pi_2})
				 \\ 
                         \begin{aligned} &= N^{\cyc({\pi_1})+\cyc({\pi_2}^{-1}\gamma)-1}
				\frac{(-1)^{|{\pi_1}^{-1}{\pi_2}|}}{N^{d+|{\pi_1}^{-1}{\pi_2}|}}
				\sum_{g=0}^{\infty} \frac{\vec{W}_g({\pi_1},{\pi_2})}{N^{2g}} \\
				 &=N^{-|{\pi_1}|-|{\pi_1}^{-1}{\pi_2}| - |{\pi_2}^{-1}\gamma| + |\gamma|}
				(-1)^{|{\pi_1}^{-1}{\pi_2}}
				\sum_{g=0}^{\infty} \frac{\vec{W}_g({\pi_1},{\pi_2})}{N^{2g}} \\
				&= O(N^{-2\genus({\pi_1},{\pi_1}^{-1}{\pi_2},{\pi_2}^{-1}\gamma)}).
			\end{aligned} \end{multline*}
			
		We conclude that each term of $\Quantum_N$ is of order		
			\begin{multline*}
				N^{\cyc({\pi_1})+\cyc({\pi_2}^{-1}\gamma)-1} \Wg_N({\pi_1},{\pi_2})
			\mathbf{R}_{({\pi_1},{\pi_2})}(N)= \\
				O\left(N^{\defect({\pi_1})+\defect({\pi_2}^{-1}\gamma)-
				2 \genus({\pi_1},{\pi_1}^{-1}{\pi_2},{\pi_2}^{-1}\gamma)}\right),
			\end{multline*}
		and hence is $O(1)$ by the second part of \cref{lem:Defect}.
	\end{proof}

\subsection{Classical asymptotics}
We now deal with the asymptotics of the classical part of $\tau_N$.

	\begin{theorem}
		\label{thm:ClassicalAsymptotics}
		Under the hypotheses of \cref{thm:Main}, the
		classical part of $\tau_N$ admits, for each $N \geq d$,
		the absolutely convergent series expansion
			\begin{equation*}
				\Classical_N= \sum_{k=0}^{\infty} \frac{e_k(N)}{N^{2k}},
			\end{equation*}
		where 
			\begin{multline*}
				e_k(N) = \sum_{\substack{(g,h) \in \N^2 \\ g+h=k}} 
				\sum_{\substack{({\pi_1},{\pi_2}) \in \Sy{d}^2 \\
				\genus({\pi_1},{\pi_1}^{-1}{\pi_2},{\pi_2}^{-1}\gamma)=h}}
				(-1)^{|{\pi_1}^{-1}{\pi_2}|}\ \times \\ \times \vec{W}_g({\pi_1},{\pi_2})\
				\PPA{{\pi_1}}(N)\ \PPB{{\pi_2}^{-1}\gamma}(N).
			\end{multline*}
	\end{theorem}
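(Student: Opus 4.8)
The plan is to substitute the monotone-walk expansion \eqref{eqn:WeingartenReduced} of the Weingarten function directly into the formula for $\Classical_N$ furnished by \cref{thm:ClassicalQuantumDecomposition}, and then reorganize the resulting double series by the total power of $N$. Concretely, one starts from
\[
\Classical_N = \frac{1}{N}\sum_{(\pi_1,\pi_2)\in\Sy{d}^2} N^{\cyc(\pi_1)+\cyc(\pi_2^{-1}\gamma)}\,\Wg_N(\pi_1,\pi_2)\,\PPA{\pi_1}(N)\,\PPB{\pi_2^{-1}\gamma}(N),
\]
and inserts \eqref{eqn:WeingartenReduced}, which is legitimate and absolutely convergent precisely because we assume $N\ge d$. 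The term indexed by a pair $(\pi_1,\pi_2)\in\Sy{d}^2$ together with a number $g\in\N$ of extra steps then carries the sign $(-1)^{|\pi_1^{-1}\pi_2|}$, the monotone-walk count $\vec{W}_g(\pi_1,\pi_2)$, the moment data $\PPA{\pi_1}(N)\,\PPB{\pi_2^{-1}\gamma}(N)$, and a power of $N$ equal to $\cyc(\pi_1)+\cyc(\pi_2^{-1}\gamma)-1-\bigl(d+|\pi_1^{-1}\pi_2|+2g\bigr)$.

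The heart of the argument is a short combinatorial simplification of this exponent. Using the identity $|\pi|=d-\cyc(\pi)$ for the word norm on $\Sy{d}$ generated by transpositions, the exponent rewrites as $d-1-|\pi_1|-|\pi_1^{-1}\pi_2|-|\pi_2^{-1}\gamma|-2g$. Since $\pi_1\cdot(\pi_1^{-1}\pi_2)\cdot(\pi_2^{-1}\gamma)=\gamma$ and $|\gamma|=d-1$, the definition of genus yields $|\pi_1|+|\pi_1^{-1}\pi_2|+|\pi_2^{-1}\gamma|=(d-1)+2h$, where $h:=\genus(\pi_1,\pi_1^{-1}\pi_2,\pi_2^{-1}\gamma)$; hence the exponent collapses to $-2(g+h)$. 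Therefore every term contributes the power $N^{-2(g+h)}$, and gathering the coefficients of all terms with $g+h=k$ reproduces exactly the expression claimed for $e_k(N)$.

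It remains to justify the rearrangement and the absolute-convergence assertion. For each fixed $N\ge d$ the inner sum over $g$ is absolutely convergent for every pair $(\pi_1,\pi_2)$ by \eqref{eqn:WeingartenReduced}, while the outer sum over $\Sy{d}^2$ is finite; a finite linear combination of absolutely convergent series is absolutely convergent, so grouping the terms according to the value of $g+h$ is permitted and produces an absolutely convergent series in powers of $N^{-2}$ whose sum is still $\Classical_N$. One should also note that for each $k$ the coefficient $e_k(N)$ is a genuine finite sum — only $g\le k$ and finitely many permutation pairs occur — so it is well defined, its $N$-dependence entering only through the finitely many traces $\E\tr(A_N^j)$, $\E\tr(B_N^j)$ appearing inside $\PPA{\pi_1}(N)$ and $\PPB{\pi_2^{-1}\gamma}(N)$ via \cref{prop:PPA-QQA-asymptoticalyO1}. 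I do not expect a serious obstacle here: once \cref{thm:ClassicalQuantumDecomposition} and \eqref{eqn:WeingartenReduced} are in hand, everything reduces to bookkeeping with exponents of $N$, and the one point demanding a moment's care is recognizing that $\pi_1$, $\pi_1^{-1}\pi_2$, $\pi_2^{-1}\gamma$ multiply to $\gamma$ in that order, so that the genus formula applies and delivers the collapse $d-1-|\pi_1|-|\pi_1^{-1}\pi_2|-|\pi_2^{-1}\gamma|=-2h$.
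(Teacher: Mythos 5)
Your proposal is correct and follows essentially the same route as the paper's proof: both insert the monotone-walk expansion \eqref{eqn:WeingartenReduced} into the formula for $\Classical_N$ from \cref{thm:ClassicalQuantumDecomposition}, use $|\pi|=d-\cyc(\pi)$ together with $\pi_1\cdot(\pi_1^{-1}\pi_2)\cdot(\pi_2^{-1}\gamma)=\gamma$ to collapse the $N$-exponent to $-2(g+h)$, and then collect terms by $k=g+h$. You have merely spelled out the exponent bookkeeping and the rearrangement justification a bit more explicitly than the paper, which simply records the string of equalities.
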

	
	\begin{proof}
		According to \cref{thm:ClassicalQuantumDecomposition}
		and the expansion \eqref{eqn:WeingartenReduced},
		we have
			\begin{multline*}
				\Classical_N = 
                                \\ \begin{aligned} \hspace{5ex} &= \sum_{({\pi_1},{\pi_2}) \in \Sy{d}^2} 
				N^{\cyc({\pi_1})+\cyc({\pi_2}^{-1}\gamma)-1}\Wg_N({\pi_1},{\pi_2})
				\ \PPA{{\pi_1}}(N)\ \PPB{{\pi_2}^{-1}\gamma}(N) \\
				&= \sum_{({\pi_1},{\pi_2}) \in \Sy{d}^2} 
				N^{-2\genus({\pi_1},{\pi_1}^{-1}{\pi_2},{\pi_2}^{-1}\gamma)} 
                                (-1)^{|{\pi_1}^{-1}{\pi_2}|}
				\times \\ & \hspace{20ex} \times
				\PPA{{\pi_1}}(N)\ \PPB{{\pi_2}^{-1}\gamma}(N)
				\sum_{g=0}^{\infty} \frac{\vec{W}_g({\pi_1},{\pi_2})}{N^{2g}} \\
				&=\sum_{g,h=0}^{\infty} \frac{1}{N^{2(g+h)}}
				\sum_{\substack{({\pi_1},{\pi_2}) \in \Sy{d}^2 \\
				\genus({\pi_1},{\pi_1}^{-1}{\pi_2},{\pi_2}^{-1}\gamma)=h}} 
				(-1)^{|{\pi_1}^{-1}{\pi_2}|} \times \\ & \hspace{20ex} \times \vec{W}_g({\pi_1},{\pi_2})\
				\PPA{{\pi_1}}(N)\ \PPB{{\pi_2}^{-1}\gamma}(N) \\
				&= \sum_{k=0}^{\infty} \sum_{\substack{g,h\geq 0\\ g+h=k}} \frac{1}{N^k}
				\sum_{\substack{({\pi_1},{\pi_2}) \in \Sy{d}^2 \\
				\genus({\pi_1},{\pi_1}^{-1}{\pi_2},{\pi_2}^{-1}\gamma)=h}}
				(-1)^{|{\pi_1}^{-1}{\pi_2}|} \times \\ & \hspace{20ex} \times \vec{W}_g({\pi_1},{\pi_2})\
				\PPA{{\pi_1}}(N)\ \PPB{{\pi_2}^{-1}\gamma}(N).
				\end{aligned} \end{multline*}
	\end{proof}	

\subsection{Semiclassical asymptotics and freeness}
Combining \cref{thm:ClassicalAsymptotics,thm:QuantumAsymptotics},
we obtain the following corollary.

	\begin{corollary}
	\label{cor:SemiclassicalAsymptotics}
		For any sequence $\hbar_N=O(1)$, we have
			\begin{equation*}
				\tau_N = \Classical_N + O(\hbar_N)
			\end{equation*}	
		as $N \rightarrow \infty$.
		In particular, if $\hbar_N = o(N^{-2l})$ as $N \rightarrow \infty$, then
			\begin{equation*}
				\tau_N = \Classical_N + o(N^{-2l}) =\sum_{k=0}^l \frac{e_k(N)}{N^{2k}} + 
				o(N^{-2l}).
			\end{equation*}
	\end{corollary}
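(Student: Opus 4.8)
The plan is to combine the two asymptotic estimates established just above, namely \cref{thm:ClassicalAsymptotics}, which provides the absolutely convergent expansion $\Classical_N = \sum_{k=0}^\infty e_k(N) N^{-2k}$ valid for all $N \geq d$, and \cref{thm:QuantumAsymptotics}, which states that $\Quantum_N = O(1)$ as $N \to \infty$ whenever $\hbar_N = O(1)$. The classical/quantum decomposition of \cref{thm:ClassicalQuantumDecomposition} reads $\tau_N = \Classical_N + \hbar_N \Quantum_N$, so the first assertion $\tau_N = \Classical_N + O(\hbar_N)$ is immediate: since $\hbar_N = O(1)$ forces $\Quantum_N = O(1)$ by \cref{thm:QuantumAsymptotics}, the product $\hbar_N \Quantum_N$ is $O(\hbar_N)$.

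For the second assertion, assume $\hbar_N = o(N^{-2l})$. First I would note that this in particular implies $\hbar_N = O(1)$, so the previous display applies and gives $\tau_N = \Classical_N + O(\hbar_N) = \Classical_N + o(N^{-2l})$. It then remains to truncate the series for $\Classical_N$ at order $k = l$. This is where I would use that each coefficient $e_k(N)$ is, by the formula in \cref{thm:ClassicalAsymptotics}, a polynomial in the quantities $\E\tr A_N^j$ and $\E\tr B_N^j$ with coefficients the universal integers $\vec{W}_g(\pi_1,\pi_2)$; under the hypotheses of \cref{thm:Main} these traces converge, hence $e_k(N) = O(1)$ for each fixed $k$. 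One must also control the tail $\sum_{k > l} e_k(N) N^{-2k}$ uniformly in $N$; this follows from the \emph{absolute} convergence asserted in \cref{thm:ClassicalAsymptotics} — indeed, for $N \geq d$ the tail is bounded by $N^{-2(l+1)}$ times a convergent series whose terms are $O(1)$, hence the tail is $O(N^{-2(l+1)}) = o(N^{-2l})$. Combining, $\Classical_N = \sum_{k=0}^l e_k(N) N^{-2k} + o(N^{-2l})$, and adding back the $o(N^{-2l})$ quantum error yields the claim.

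The only mild subtlety — and the step I would take the most care over — is the uniformity of the tail estimate: a priori the bound $e_k(N) = O(1)$ is pointwise in $k$, so one needs to argue that the implied constants do not grow too fast with $k$, or more cleanly, invoke absolute convergence of the series together with boundedness of the summands $\vec{W}_g(\pi_1,\pi_2)\,\PPA{\pi_1}(N)\,\PPB{\pi_2^{-1}\gamma}(N)$ over the fixed finite index set $\Sy{d}^2$ once $N$ is large enough for the limiting moments to be approached. Since $d$ is fixed and there are only finitely many pairs $(\pi_1,\pi_2)$, and since for each such pair the Weingarten tail $\sum_{g} \vec{W}_g(\pi_1,\pi_2) N^{-2g}$ is an absolutely convergent series in $N \geq d$ with the polynomial factors in $\E\tr A_N^j, \E\tr B_N^j$ bounded uniformly in $N$, the desired uniform tail bound follows. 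Everything else is bookkeeping with the definition of the little-$o$ symbol.
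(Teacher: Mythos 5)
Your proof is correct and follows exactly the route the paper intends: the paper states the corollary as an immediate consequence of \cref{thm:QuantumAsymptotics} and \cref{thm:ClassicalAsymptotics} (it gives no written proof beyond the phrase ``Combining [these], we obtain the following corollary''), and you have simply made explicit the tail estimate needed to truncate $\Classical_N$ at order $l$, which the paper leaves to the reader. Your observation that the uniformity of the tail bound reduces, for each of the finitely many pairs $(\pi_1,\pi_2)\in\Sy{d}^2$, to the standard fact that the tail of a power series convergent at $1/N^2 = 1/d^2$ is $O(N^{-2(m+1)})$ past truncation order $m$, is exactly the right way to close the gap.
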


	Let us now explain how the
	the $l=0$ case of \cref{cor:SemiclassicalAsymptotics} yields
	the proof of \cref{thm:Main}.  Assuming only $\hbar_ N = o(1)$
	as $N \to \infty$, \cref{cor:SemiclassicalAsymptotics} implies
	\begin{equation*}
			\tau_N = \Classical_N + o(1) = e_0(N) + o(1)
		\end{equation*}
	as $N \rightarrow \infty$, with
		\begin{equation*}
			e_0(N) = \sum_{\substack{({\pi_1},{\pi_2}) \in \Sy{d}^2 \\ 
			\genus({\pi_1},{\pi_1}^{-1}{\pi_2},{\pi_2}^{-1}\gamma)=0}}
				(-1)^{|{\pi_1}^{-1}{\pi_2}|}\ \vec{W}_0({\pi_1},{\pi_2})\
				\PPA{{\pi_1}}(N)\ \PPB{{\pi_2}^{-1}\gamma}(N).
		\end{equation*}
	Now, under the hypotheses of \cref{thm:Main}, the limits
		\begin{equation*}
			\PPA{{\pi_1}}=\lim_{N \to \infty} \PPA{{\pi_1}}(N) 
			 \quad\text{ and }\quad
			\PPB{{\pi_2}^{-1}\gamma}=
			\lim_{N \to \infty}\PPB{{\pi_2}^{-1}\gamma}(N)
		\end{equation*}
	exists, and are polynomials in $x_1,\dots,x_{|p|}$ and 
	$y_1,\dots,y_{|q|}$ given explicitly by the universal form
	\eqref{eqn:ClassicalComponent} of the classical component.
	We thus have
		\begin{equation*}
			\lim_{N \to \infty} \tau_N 
			=  \sum_{\substack{({\pi_1},{\pi_2}) \in \Sy{d}^2 \\ 
			\genus({\pi_1},{\pi_1}^{-1}{\pi_2},{\pi_2}^{-1}\gamma)=0}}
				(-1)^{|{\pi_1}^{-1}{\pi_2}|}\ \vec{W}_0({\pi_1},{\pi_2})\
				\PPA{{\pi_1}}\ \PPB{{\pi_2}^{-1}\gamma}.
		\end{equation*}
	This is exactly asymptotic freeness, see Appendix \ref{app:FreeProb}.
	Hence, Theorem \ref{thm:Main} is proved.

\section{Covariance of BPP Observables}
\label{sec:Covariance}
In this section we explain how the mean value analysis carried out in 
Sections \ref{sec:MeanValuesPlanckScale} and 
\ref{sec:MeanValueAsymptotics}, which yields the semiclassical/large-dimension 
asymptotics of the $1$-point functions of BPP observables of the LR process, 
can be extended to higher correlation functions.  We limit our discussion
to the connected $2$-point functions (covariances)
	\begin{equation*}
		\langle \p_{k_1}^{(N)}\p_{k_2}^{(N)} \rangle_c = 
		\langle \p_{k_1}^{(N)} \p_{k_2}^{(N)} \rangle - \langle \p_{k_1}^{(N)} \rangle
		 \langle \p_{k_2}^{(N)} \rangle,
	\end{equation*}	
since these are what we need to understand in order to obtain
Theorem \ref{thm:QuantumLLN}, the Law of Large Numbers for BPP observables of
the LR process.

\subsection{Covariance setup}
According to Biane's formula \eqref{eqn:QuantumCorrelators}, the covariance of the 
classical random variables $\p_{k_1}^{(N)},\p_{k_2}^{(N)}$ coincides 
with the covariance of the quantum random variables $\tr(C_N^{k_1}),\tr(C_N^{k_2})$:
	\begin{equation*}
		\langle \p_{k_1}^{(N)}\p_{k_2}^{(N)} \rangle_c =
		\E[\tr(C_N^{k_1})\tr(C_N^{k_2})] - \E[\tr(C_N^{k_1})]\ \E\tr[(C_N^{k_2})]
	\end{equation*}
for any $k_1,k_2 \in \N^*$, where $C_N=A_N+B_N$.  For example, in the case of the 
simplest connected $2$-point function, the variance of $\p_1^{(N)}$, we have 
	\begin{align*}
		\langle \p_1^{(N)}\p_1^{(N)} \rangle_c &=\E[\tr(C_N)\tr(C_N)] - \E[\tr(C_N)]\E[\tr(C_N)] \\
		&= \E[\tr(A_N)\tr(A_N)] - \E[\tr(A_N)]\ \E[\tr(A_N)] \\
		&+ \E[\tr(A_N)\tr(B_N)] - \E[\tr(A_N)]\ \E[\tr(B_N)] \\
		&+ \E[\tr(B_N)\tr(A_N)] - \E[\tr(B_N)]\ \E[\tr(A_N)] \\
		&+ \E[\tr(B_N)\tr(B_N)] - \E[\tr(B_N)]\ \E[\tr(B_N)].
	\end{align*}
In general, in order to compute $\langle \p_{k_1}^{(N)}\p_{k_2}^{(N)} \rangle_c$ in the
semiclassical/large-dimension limit, we need to be able to estimate differences of the form
	\begin{align*}
		&\E\left[\tr(A_N^{p_1(1)}B_N^{q_1(1)} \cdots A_N^{p_1(d_1)}B_N^{q_1(d_1)})
		\tr(A_N^{p_2(1)}B_N^{q_2(1)} \cdots A_N^{p_2(d_2)}B_N^{q_2(d_2)})\right] \\
			-&\E\left[\tr(A_N^{p_1(1)}B_N^{q_1(1)} \cdots A_N^{p_1(d_1)}B_N^{q_1(d_1)})\right]\ 
			\E\left[\tr(A_N^{p_2(1)}B_N^{q_2(1)} \cdots A_N^{p_2(d_2)}B_N^{q_2(d_2)})\right]
	\end{align*}
in the semiclassical/large-dimension limit, where $d_1,d_2 \in \N^*$ are fixed positive integers
and $p_1,q_1 \colon [d_1] \to \N$ and $p_2,q_d \colon [d_2] \to \N$ are fixed functions.
Let us write 
	\begin{equation*}
		\tau_{12}^{(N)} := \E\left[\tr(A_N^{p_1(1)}B_N^{q_1(1)} \cdots 
		A_N^{p_1(d_1)}B_N^{q_1(d_1)})\tr(A_N^{p_2(1)}B_N^{q_2(1)} \cdots 
		A_N^{p_2(d_2)}B_N^{q_2(d_2)})\right]
	\end{equation*}
and 
	\begin{align*}
		\tau_1^{(N)} &:= \E\tr(A_N^{p_1(1)}B_N^{q_1(1)} \cdots A_N^{p_1(d_1)}B_N^{q_1(d_1)}) \\
		\tau_2^{(N)} &:= \E\tr(A_N^{p_2(1)}B_N^{q_2(1)} \cdots A_N^{p_2(d_2)}B_N^{q_2(d_2)})
	\end{align*}	
for these quantities, so that our goal is to estimate the difference 
	\begin{equation*}
		\tau_{12}^{(N)} - \tau_1^{(N)}\tau_2^{(N)}
	\end{equation*}
in the semiclassical/large-dimension limit.  In view of our analysis of mean values in 
Sections \ref{sec:MeanValuesPlanckScale} and \ref{sec:MeanValueAsymptotics}, 
the second term is understood.
It remains to analyze the first term, $\tau_{12}^{(N)}$, and in order to do this we will
generalize the approach developed above.

\subsection{$2$-point functions at the Planck scale}
Let us rewrite $\tau_{12}^{(N)}$ as follows.  Put $d=d_1+d_2$, and define functions
$p,q\colon [d] \to \N$ by
	\begin{align*}
		p|_{[d_1]}&=p_1,& p|_{[d_1+1,d]}&=p_2 \\
		q|_{[d_1]}&=q_1,& q|_{[d_1+1,d]}&=q_2.
	\end{align*}
We then have
	\begin{multline*}
		\tau_{12}^{(N)} = \E\left[
		\tr(A_N^{p(1)}B_N^{q(1)} \cdots A_N^{p(d_1)}B_N^{q(d_1)}) \right. \\ \left.
		\tr(A_N^{p(d_1+1)}B_N^{q(d_1+1)} \cdots A_N^{p(d_1+d_2)}B_N^{q(d_1+d_2)})
		\right].
	\end{multline*}
It will be convenient to affect the same notational change for the 
quantities $\tau_1^{(N)}$ and $\tau_2^{(N)}$, that is we write
	\begin{align*}
		\tau_1^{(N)} &= \E\tr(A_N^{p(1)}B_N^{q(1)} \cdots A_N^{p(d_1)}B_N^{q(d_1)}), \\
		\tau_2^{(N)} &= \E\tr(A_N^{p(d_1+1)}B_N^{q(d_1+1)} \cdots 
		A_N^{p(d_1+d_2)}B_N^{q(d_1+d_2)}).
	\end{align*}
	
We now analyze $\tau_{12}^{(N)}$ following the same steps as in 
\cref{sec:MeanValuesPlanckScale,sec:MeanValueAsymptotics}.

\subsubsection{Unitary invariance}

	\begin{proposition}
		Define a function $f \colon \group{U}(N) \rightarrow \C$ by 
			\begin{multline*}
				f_N(U) := \E\left[
		\tr\left(UA_N^{p(1)}U^{-1}B_N^{q(1)} \cdots UA_N^{p(d_1)}U^{-1}B_N^{q(d_1)}\right) \right.
                \times \\ \times 
                \left.
		\tr\left(UA_N^{p(d_1+1)}U^{-1}B_N^{q(d_1+1)} \cdots 
		UA_N^{p(d_1+d_2)}U^{-1}B_N^{q(d_1+d_2)}\right)
		\right].
			\end{multline*}
		Then, $f_N$ is constant, being equal to $\tau_{12}^{(N)}$ for all $U 
		\in \group{U}(N)$.	
	\end{proposition}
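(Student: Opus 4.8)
The plan is to reduce this to the unitary invariance statement already established for one-point functions, namely \cref{prop:Invariance}. The key observation is that $\tau_{12}^{(N)}$ is itself a sum of $\tr$-moments of words in $A_N$ and $B_N$: although written as a product of two traces, we may regard the integrand of $f_N(U)$ as a product of two conjugated traces, and the conjugation by $U$ is applied uniformly (the same $U$ appears in every factor of both traces). The argument used to prove \cref{prop:Invariance} is insensitive to whether we are looking at one trace or a product of traces; what matters is that conjugating $A_N$ by a fixed matrix corresponds, on the representation-theoretic side, to the action of that matrix, and that the quantum expectation $\E=\tr_{\V_N}\otimes\tr_{\W_N}$ is invariant under the simultaneous adjoint action.

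Concretely, first I would recall the precise mechanism behind \cref{prop:Invariance}: for $U\in\group{U}(N)$, one has $U A_N U^{-1} = (\rho_N\otimes\id)(\tilde{U})\, A_N\, (\rho_N\otimes\id)(\tilde U)^{-1}$ for a suitable lift, equivalently the conjugated matrix $UA_NU^{-1}$ has entries obtained by applying an inner automorphism of $\U(\mathfrak{gl}_N)$ intertwined through $\rho_N$; the same holds for $B_N$ through $\sigma_N$ on the second tensor factor. Crucially $UA_NU^{-1}$ and $UB_NU^{-1}$ are still realized as operators on $\V_N\otimes\W_N$, conjugated by one and the same operator $T_U:=(\rho_N\otimes\id)(\tilde U)(\id\otimes\sigma_N)(\tilde U)$, or more simply: conjugating the matrix of entries by $U$ amounts to conjugating every operator $A_N,B_N$ by a single invertible operator on $\V_N\otimes\W_N$. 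Then the integrand of $f_N(U)$ becomes
\[
\E\!\left[\tr\!\big(T_U W_1 T_U^{-1}\big)\,\tr\!\big(T_U W_2 T_U^{-1}\big)\right],
\]
where $W_1,W_2$ are the two fixed words in $A_N,B_N$. Since $\tr(T_U W_i T_U^{-1})=\tr(W_i)$ as operators (the ordinary matrix trace on $\End(\V_N\otimes\W_N)$ is conjugation-invariant), each conjugation drops out and the integrand equals $\E[\tr(W_1)\tr(W_2)]=\tau_{12}^{(N)}$, independent of $U$.

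The only subtlety — and the step I would be most careful about — is that the trace appearing inside $\E$ is the normalized matrix trace on the $N\times N$ matrix of operators, not a trace on $\End(\V_N\otimes\W_N)$ directly, so I should spell out why conjugating the $N\times N$ matrix $A_N$ by $U\in\group{U}(N)$ (acting on the ``matrix indices'') still yields conjugation by a single operator and leaves the inner trace invariant. This is exactly the content that makes \cref{prop:Invariance} work, so in practice I would simply cite \cite[Section 9.2]{Biane98} and \cref{prop:Invariance}, remarking that the proof there applies verbatim to products of traces since it only uses invariance of the quantum expectation under simultaneous conjugation of $A_N$ and $B_N$ together with cyclicity/conjugation-invariance of the matrix trace, neither of which is affected by taking a product of two such traces. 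I would then conclude, exactly as after \cref{prop:Invariance}, that $\tau_{12}^{(N)}=\int_{\group{U}(N)} f_N(U)\,\dif U$, setting up the Weingarten expansion to follow.
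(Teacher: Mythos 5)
Your high-level intuition --- that the mechanism behind \cref{prop:Invariance} is insensitive to replacing a single trace by a product of traces --- is correct, but the concrete argument you write down identifies the wrong conjugating operator, and fixing this is exactly where the actual content of the proposition lies.

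The operator that realizes the entry-wise substitution $A_N \mapsto UA_NU^{-1}$ is $T^{A}_U := \rho_N(U^T) \otimes I_{\W_N}$ (the transpose arises from the placement of indices in the definition of a BPP matrix), \emph{not} $T_U := \rho_N(U^T) \otimes \sigma_N(U^T)$. The crucial point is that $T^{A}_U$ acts only on the $\V_N$ tensor factor and hence \emph{commutes} with every entry of $B_N$, which acts only on $\W_N$. Your $T_U$ acts on both factors, so conjugating the word $W_1 = A_N^{p(1)}B_N^{q(1)}\cdots$ entry-wise by $T_U$ produces the word $(UA_NU^{-1})^{p(1)}(UB_NU^{-1})^{q(1)}\cdots$, in which $B_N$ is \emph{also} conjugated by $U$ --- but that is not the integrand of $f_N$. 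In fact, if both $A_N$ and $B_N$ were conjugated, then $UA_N^{p(k)}U^{-1}\,UB_N^{q(k)}U^{-1} = U\, A_N^{p(k)}B_N^{q(k)}\, U^{-1}$, and constancy of $f_N$ would follow trivially from the cyclic invariance of the $N\times N$ matrix trace, with no appeal to the tensor structure of $\A_N$. The actual proposition conjugates only $A_N$, and what makes it nontrivial is that the entry-wise conjugation by $T^A_U$ acts on $\V_N$ alone while the $B_N$-entries live on $\W_N$; expanding the matrix trace, one slides every $T^A_U$ and $(T^A_U)^{-1}$ past the $B_N$-entries and collapses them into a single conjugation, $\tr\!\left(UA_N^{p(1)}U^{-1}B_N^{q(1)}\cdots\right) = T^A_U\,\tr(W_1)\,(T^A_U)^{-1}$.

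There is also a smaller slip: the equality $\tr(T_U W_i T_U^{-1}) = \tr(W_i)$ is not true as an identity of operators in $\End(\V_N\otimes\W_N)$. The partial trace $\tr = N^{-1}\sum_r(\cdot)_{rr}$ commutes with entry-wise conjugation, so what one gets is $\tr\!\left(T^A_U W_i (T^A_U)^{-1}\right) = T^A_U\,\tr(W_i)\,(T^A_U)^{-1}$, a genuine operator conjugate of $\tr(W_i)$. The conjugation is removed only by the expectation $\E = \tr_{\V_N}\otimes\tr_{\W_N}$, which is tracial, and this is precisely where the two-trace case needs care: because the \emph{same} $T^A_U$ conjugates both factors,
\[
	\E\!\left[T^A_U\tr(W_1)(T^A_U)^{-1}\,T^A_U\tr(W_2)(T^A_U)^{-1}\right]
	= \E\!\left[T^A_U\tr(W_1)\tr(W_2)(T^A_U)^{-1}\right]
	= \E\!\left[\tr(W_1)\tr(W_2)\right],
\]
which is $\tau_{12}^{(N)}$. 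Once the conjugating operator and the role of the traciality of $\E$ are stated in this corrected form, your plan of carrying over the argument from \cref{prop:Invariance} verbatim is indeed valid.
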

	
As a consequence of this invariance, we have
	\begin{equation*}
		\tau_{12}^{(N)}= \int_{\group{U}(N)} f_N(U) \mathrm{d}U.
	\end{equation*}
We want to use this in exactly the same way as we did in our mean value computation.

Expanding the first trace yields the sum
	\begin{multline*}
		\frac{1}{N}\sum_{r_1\colon [4d_1] \to [N]}
		U_{r_1(1)r_1(2)}\left(A_N^{p(1)}\right)_{r_1(2)r_1(3)}U^{-1}_{r_1(3)r_1(4)}
		 \left(B_N^{q(1)}\right)_{r_1(4)r_1(5)} 
		\cdots  \\ 
		\cdots U_{r_1(4d_1-3)r_1(4d_1-2)}\left(A_N^{p(d_1)}\right)_{r_1(4d_1-2)r_1(4d_1-1)}
		U^{-1}_{r_1(4d_1-1)r_1(4d_1)} \left(B_N^{q(d_1)}\right)_{r_1(4d_1)r_1(1)}.
	\end{multline*}
Expanding the second trace yields the sum
	\begin{multline*}
		\frac{1}{N}\sum_{r_2\colon [4d_2] \to [N]}
		U_{r_2(1)r_2(2)}\left(A_N^{p(d_1+1)}\right)_{r_2(2)r_2(3)}U^{-1}_{r_2(3)r_2(4)}
		 \left(B_N^{q(d_1+1)}\right)_{r_2(4)r_2(5)} 
		\cdots  \\ 
		\cdots U_{r_2(4d_2-3)r_2(4d_2-2)}\left(A_N^{p(d_1+d_2)}\right)_{r_2(4d_2-2)r_2(4d_2-1)}
		U^{-1}_{r_2(4d_2-1)r_2(4d_2)} \left(B_N^{q(d_1+d_2)}\right)_{r_2(4d_2)r_2(1)}.
	\end{multline*}

For the first trace, let us reparametrize the summation index $r_1\colon [4d_1] \to [N]$ 
by a quadruple of functions $i_1,j_1,i_1',j_1'\colon [d_1] \to [N]$ according to
	\begin{multline*}
	\big(r_1(1),r_1(2),r_1(3),r_1(4),\dots,r_1(4d_1-3),r_1(4d_1-2),r_1(4d_1-1),r_1(4d_1)\big) \\
	= \big(i_1(1),j_1(1),j_1'(1),i_1'(1),\dots,i_1(d_1),j_1(d_1),j_1'(d_1),i_1'(d_1)\big).
		\end{multline*}
Then, the above expansion of the first trace becomes
	\begin{align*}
		&\frac{1}{N}\sum_{i_1,j_1,_1i',j_1'\colon [d_1] \to [N]} L_N(i_1,j_1,i_1',j_1')\ 
		\prod_{k=1}^{d_1}
				\left(A_N^{p(k)}\right)_{j_1(k)j_1'(k)} 
				\left(B_N^{q(k)}\right)_{i_1'(k)i_1\gamma_1(k)} \\
		=& \frac{1}{N}\sum_{i_1,j_1,i_1',j_1'\colon [d_1] \to [N]} L_N(i_1,j_1,i_1',j_1')\ 
		\prod_{k=1}^{d_1}
				\left( A_N^{p(k)}\right)_{j_1(k)j_1'(k)}
			\prod_{k=1}^{d_1} \left(B_N^{q(k)}\right)_{i_1'(k)i_1\gamma_1(k)},
	\end{align*}
where
	\begin{equation*}
		L_N(i_1,j_1,i_1',j_1') = \prod_{k=1}^{d_1} U_{i_1(k)j_1(k)} \overline{U}_{i_1'(k)j_1'(k)},
	\end{equation*}
and $\gamma_1$ is the cycle $(1\ 2\ \dots\ d_1)$ in $\Sy{d_1+d_2}$.  

Similarly, if we reparametrize the summation index $r_2\colon [4d_2] \to [N]$ 
by a quadruple of functions $i_2,j_2,i_2',j_2'\colon [d_2] \to [N]$ according to
	\begin{multline*}
		\big(r_2(1),r_2(2),r_2(3),r_2(4),\dots,r_2(4d_2-3),r_2(4d_2-2),r_2(4d_2-1),r_2(4d_2)\big) \\
	= \big(i_2(d_1+1),j_2(d_1+1),j_2'(d_1+1),i_2'(d_1+1),\dots,
	i_2(d),j_2(d),j_2'(d),i_2'(d)\big),
		\end{multline*}
the expansion of the second trace takes the form
\begin{align*}
		&\frac{1}{N}\sum_{i_2,j_2,i_2',j_2'\colon [d_1+1,d_1+d_2] \to [N]} L_N(i_2,j_2,i_2',j_2')\ 
		\prod_{k=d_1+1}^{d_1+d_2}
				\left(A_N^{p(k)}\right)_{j_2(k)j_2'(k)} 
				\left(B_N^{q(k)}\right)_{i_2'(k)i_2\gamma_2'(k)} \\
		=& \frac{1}{N}\sum_{i_2,j_2,i_2',j_2'\colon [d_1+1,d_1+d_2] \to [N]} L_N(i_2,j_2,i_2',j_2')\ 
		\prod_{k=d_1+1}^{d_1+d_2}
				\left( A_N^{p(k)}\right)_{j_2(k)j_2'(k)}
			\prod_{k=1}^{d_2} \left(B_N^{q(k)}\right)_{i_2'(k)i_2\gamma_2'(k)},
	\end{align*}
where
	\begin{equation*}
		L_N(i_2,j_2,i_2',j_2') = \prod_{k=d_1+1}^{d_1+d_2} 
		U_{i_2(k)j_2(k)} \overline{U}_{i_2'(k)j_2'(k)},
	\end{equation*}
and $\gamma_2'$ is the cycle $(d_1+1\ d_1+2\ \dots\ d_1+d_2)$ in $\Sy{d_1+d_2}$.
	
We now smash the expansions of the two traces together to get the huge compound expansion
	\begin{align*}
		&\frac{1}{N^2}\sum_{i_1,j_1,_1i',j_1'\colon [d_1] \to [N]}\ 
		\sum_{i_2,j_2,i_2',j_2'\colon [d_1+1,d_1+d_2] \to [N]}
		L_N(i_1,j_1,i_1',j_1')\ L_N(i_2,j_2,i_2',j_2') \\
		&\times \prod_{k=1}^{d_1}\left( A_N^{p(k)}\right)_{j_1(k)j_1'(k)}
		\prod_{k=d_1+1}^{d_1+d_2}\left( A_N^{p(k)}\right)_{j_2(k)j_2'(k)}
		\prod_{k=1}^{d_1} \left(B_N^{q(k)}\right)_{i_1'(k)i_1\gamma_1(k)} 
		\prod_{k=d_1+1}^{d_1+d_2} \left(B_N^{q(k)}\right)_{i_2'(k)i_2\gamma_2'(k)} \\
		=&\frac{1}{N^2}\sum_{i,j,i',j'\colon [d_1+d_2] \to [N]}\ 
		L_N(i,j,i',j')
		\prod_{k=1}^{d_1+d_2} \left( A_N^{p(k)}\right)_{j(k)j'(k)}
		\prod_{k=1}^{d_1+d_2} \left(B_N^{q(k)}\right)_{i'(k)i\gamma_1\gamma_2'(k)},
	\end{align*}
where
	\begin{equation*}
		L_N(i,j,i',j')=\prod_{k=1}^{d_1+d_2} U_{i(k)j(k)}\ \overline{U}_{i'(k)j'(k)}.
	\end{equation*}
Thus, we obtain the following representation of $\tau_{12}^{(N)}$:
	\begin{multline*}
		\tau_{12}^{(N)} = \frac{1}{N^2}\sum_{i,j,i',j'\colon [d_1+d_2] \to [N]}\ 
		I_N(i,j,i',j') \times \\ \times
		\E\left[\prod_{k=1}^{d_1+d_2} \left( A_N^{p(k)}\right)_{j(k)j'(k)}\right]
		\E\left[\prod_{k=1}^{d_1+d_2} \left(B_N^{q(k)}\right)_{i'(k)i\gamma_1\gamma_2'(k)}\right],
	\end{multline*}
where 
	\begin{equation*}
		I_N(i,j,i',j')=\int_{\group{U}(N)} L_N(i,j,i',j') \mathrm{d}U.
	\end{equation*}
Plugging \eqref{eqn:WeingartenConvolutionFormula} into our calculation above 
eliminates the indices $i',j'$ and produces the formula
	\begin{equation*}
		\tau_{12}^{(N)}=\frac{1}{N^2} \sum_{({\pi_1},{\pi_2}) \in \Sy{d_1+d_2}^2} \Wg_N({\pi_1},{\pi_2})\
 		\E[S^A_{{\pi_1}}]\ \E[S^B_{{\pi_2}^{-1}\gamma_1\gamma_2'}],
	\end{equation*}
where
	\begin{align*}
		S^A_{{\pi_1}}
		:=& \sum_{i\colon [d_1+d_2] \to [N]} \prod_{k=1}^{d_1+d_2}
		\left(A_N^{p(k)}\right)_{i(k)i{\pi_1}(k)} \\
	\intertext{and} 
		S^B_{{\pi_2}^{-1}\gamma_1\gamma_2'} :=&
		\sum_{i\colon [d_1+d_2] \to [N]} \prod_{k=1}^{d_1+d_2} 
		\left(B_N^{q(k)}\right)_{i(k)i{\pi_2}^{-1}\gamma_1
		\gamma_2'(k)}.
	\end{align*} 
This generalizes \eqref{eq:here-we-use-classical-independence} 
from the expected trace to the expected
product of two traces; the formula could be generalized to the expected product of any
finite number of traces in just the same way.

Our goal now is to express the operators $S^A_{{\pi_1}}$ and $S^B_{{\pi_2}^{-1}\gamma_1\gamma_2'}$
in terms of the operators
	\begin{equation*}
		\tr A_N, \tr A_N^2, \dots \quad\text{ and } \tr B_N, \tr B_N^2, \dots.
	\end{equation*}
Just like in the mean value computation, we lift the problem to the universal enveloping algebra
and use Biasimirs.   

\subsubsection{Biasimirs again}
The operators $S^A_{\pi_1}$ and $S^B_{{\pi_2}^{-1}\gamma_1\gamma_2'}$ are, up to tensoring with 
an identity operator, images of Biasimirs in irreducible representations:
	\begin{align*}
			S^A_{{\pi_1}}
				&= \rho_N(C_{{\pi_1}}^{(p)}) \otimes I_{\W_N} \\
			S^B_{{\pi_2}^{-1}\gamma_1\gamma_2'}
				&= I_{\V_N} \otimes \sigma_N(C_{{\pi_2}^{-1}\gamma_1\gamma_2'}^{(q)}).
	\end{align*}
Each of the Biasimirs $C_{\pi_1}^{(p)}$ and $C_{{\pi_2}^{-1}\gamma_1\gamma_2'}^{(q)}$ has
its own classical/quantum decomposition:
	\begin{align*}
		C_{\pi_1}^{(p)} &= \mathbf{P}_{\pi_1}^{(p)}(C_1,\dots,C_{|p|}) + 
			\hbar_N \mathbf{Q}_{\pi_1}^{(p)}(\hbar_N,N,C_1,\dots,C_{|p|}) \\
		C_{{\pi_2}^{-1}\gamma_1\gamma_2'}^{(q)} &=
			\mathbf{P}_{{\pi_2}^{-1}\gamma_1\gamma_2'}^{(q)}(C_1,\dots,C_{|q|})
			+\hbar_N \mathbf{Q}_{{\pi_2}^{-1}\gamma_1\gamma_2'}^{(q)}(\hbar_N,N,C_1,
			\dots,C_{|q|}).
	\end{align*}
	
Now we come back to the operators $S^A_{\pi_1}$ and $S^B_{{\pi_2}^{-1}\gamma_1\gamma_2'}$.
First, we have that
	\begin{equation*}
		S^A_{\pi_1} = \mathbf{P}_{\pi_1}^{(p)}(\Tr A_N,\dots,\Tr A_N^{|p|}) +
		\hbar_N \mathbf{Q}_{\pi_1}^{(p)}(\hbar_N,N,\Tr A_N,\dots,\Tr A_N^{|p|}).
	\end{equation*}
Let us rewrite this in terms of normalized traces.  Put
	\begin{align*}
		\PPA{{\pi_1}} &:=\frac{1}{N^{\cyc({\pi_1})}} 
			\mathbf{P}_{\pi_1}^{(p)}\left( \Tr A_N,\dots,\Tr A_N^{|p|}\right), \\
		\QQA{{\pi_1}} &:=\frac{1}{N^{\aex({\pi_1})}} 
                             \mathbf{Q}_{\pi_1}^{(p)}\left(\hbar_N,N,\Tr A_N,\dots,\Tr A_N^{|p|}\right).
           \end{align*}
$\PPA{{\pi_1}}$ is an
explicit polynomial in the operators
\[ \tr A_N, \tr A_N^2,\dots, \tr A_N^{|p|}, \]
while $\QQA{{\pi_1}}$ is a polynomial in the 
numbers $\hbar_N,N^{-1}$ and the operators
\begin{equation*}
 \tr A_N, \tr A_N^2,\dots, \tr A_N^{|p|}.
\end{equation*}
We thus have
	\begin{equation*}
		S^A_{\pi_1} = N^{\cyc({\pi_1})}\PPA{{\pi_1}} + \hbar_N N^{\aex({\pi_1})}\QQA{{\pi_1}}.
	\end{equation*}
Now we want to apply the expectation $\E$ to both sides of this identity in $\A_N$ to 
get an identity in $\C$.  We set
	\begin{align*}
		\PPA{{\pi_1}}(N)&:=\E\PPA{{\pi_1}} =\frac{1}{N^{\cyc({\pi_1})}} 
			\mathbf{P}_{\pi_1}^{(p)}\left( \E\Tr A_N,\dots,\E\Tr A_N^{|p|}\right), \\
		\QQA{{\pi_1}}(N)&:=\E\QQA{{\pi_1}} =\frac{1}{N^{\aex({\pi_1})}} 
                             \mathbf{Q}_{\pi_1}^{(p)}\left(\hbar_N,N,\E\Tr A_N,\dots,\E\Tr A_N^{|p|}\right);
           \end{align*}
the first of these is a polynomial in the numbers
\[ \E\tr A_N, \E\tr A_N^2,\dots, \E\tr A_N^{|p|}, \]
while the second is a polynomial in the numbers 
\begin{equation*}
 \hbar_N,N^{-1},\E\tr A_N, \E\tr A_N^2,\dots, \E\tr A_N^{|p|}.
\end{equation*}
We conclude that
	\begin{equation*}
		\E[S^A_{\pi_1}] = N^{\cyc({\pi_1})}\PPA{{\pi_1}}(N) + \hbar_N N^{\aex({\pi_1})}\QQA{{\pi_1}}(N).
	\end{equation*}

\bigskip
	
Second, we have that
	\begin{multline*}
		S^B_{{\pi_2}^{-1}\gamma_1\gamma_2'} 
		= \mathbf{P}_{{\pi_2}^{-1}\gamma_1\gamma_2'}^{(q)}(\Tr B_N,\dots,\Tr B_N^{|q|}) + \\ +
		\hbar_N \mathbf{Q}^{(q)}_{{\pi_2}^{-1}\gamma_1\gamma_2'}(\hbar_N,N,\Tr B_N,\dots,
		\Tr B_N^{|q|}).
	\end{multline*}
Once again, let us rewrite this in terms of normalized traces.  Put
	\begin{align*}
		\PPB{{\pi_2}^{-1}\gamma_1\gamma_2'} &:=\frac{1}{N^{\cyc({\pi_2}^{-1}\gamma_1\gamma_2')}} 
			\mathbf{P}_{{\pi_2}^{-1}\gamma_1\gamma_2'}^{(q)}\left( \Tr B_N,\dots,
			\Tr B_N^{|q|}\right), \\
		\QQB{{\pi_2}^{-1}\gamma_1\gamma_2'} &:=\frac{1}{N^{\aex({\pi_2}^{-1}\gamma_1\gamma_2')}} 
                             \mathbf{Q}_{{\pi_2}^{-1}\gamma_1\gamma_2'}^{(q)}
                             \left(\hbar_N,N,\Tr B_N,\dots,\Tr B_N^{|q|}\right).
           \end{align*}
$\PPB{{\pi_2}^{-1}\gamma_1\gamma_2'}$ 
is an explicit polynomial in the operators
\[ \tr B_N, \tr B_N^2,\dots, \tr B_N^{|q|}, \]
while $\QQB{{\pi_2}^{-1}\gamma_1\gamma_2'}$ 
is a polynomial in the 
numbers $\hbar_N,N^{-1}$ and the operators
\begin{equation*}
\label{eq:useful-quantites-B2}
 \tr B_N, \tr B_N^2,\dots, \tr B_N^{|q|}.
\end{equation*}
We thus have
	\begin{equation*}
		S^B_{{\pi_2}^{-1}\gamma_1\gamma_2'} = N^{\cyc({\pi_2}^{-1}\gamma_1\gamma_2')}
		\PPB{{\pi_2}^{-1}\gamma_1\gamma_2'} + 
		\hbar_N N^{\aex({\pi_2}^{-1}\gamma_1\gamma_2')}\QQB{{\pi_2}^{-1}\gamma_1\gamma_2'}.
	\end{equation*}
We apply $\E$ to both sides of this identity in $\A_N$ to 
get an identity in $\C$.  As above, we declare
	\begin{align*}
		\PPB{{\pi_2}^{-1}\gamma_1\gamma_2'}(N)&:=\E\PPB{{\pi_2}^{-1}\gamma_1\gamma_2'} = \\ & \hspace{5ex}
				\frac{1}{N^{\cyc({\pi_2}^{-1}\gamma_1\gamma_2')}} 
			\mathbf{P}_{{\pi_2}^{-1}\gamma_1\gamma_2'}^{(q)}
			\left( \E\Tr B_N,\dots,\E\Tr B_N^{|q|}\right), \\
		\QQB{{\pi_2}^{-1}\gamma_1\gamma_2'}(N)&:=\E\QQB{{\pi_2}^{-1}\gamma_1\gamma_2'} = \\ & 
			\frac{1}{N^{\aex({\pi_2}^{-1}\gamma_1\gamma_2')}} 
                             \mathbf{Q}_{{\pi_2}^{-1}\gamma_1\gamma_2'}^{(q)}
                             \left(\hbar_N,N,\E\Tr B_N,\dots,\E\Tr B_N^{|q|}\right).
           \end{align*}
The first of these is a polynomial in the numbers
\[ \E\tr B_N, \E\tr B_N^2,\dots, \E\tr B_N^{|q|}, \]
while the second is a polynomial in the numbers 
\begin{equation*}
 \hbar_N,N^{-1},\E\tr B_N, \E\tr B_N^2,\dots, \E\tr B_N^{|q|}.
\end{equation*}
We conclude that
	\begin{multline*}
		\E[S^B_{{\pi_2}^{-1}\gamma_1\gamma_2'}] = \\ 
		N^{\cyc({\pi_2}^{-1}\gamma_1\gamma_2')}\ \PPB{{\pi_2}^{-1}\gamma_1\gamma_2'}(N) 
		+ \hbar_N N^{\aex({\pi_2}^{-1}\gamma_1\gamma_2')}\ \QQB{{\pi_2}^{-1}\gamma_1\gamma_2'}(N).
	\end{multline*}
	
\subsubsection{Classical/Quantum Decomposition of $\tau_{12}^{(N)}$}
Putting these two calculations together, we compute the $({\pi_1},{\pi_2})$ term of 
$\tau_{12}^{(N)}$ as
	\begin{multline*}
		\Wg_N({\pi_1},{\pi_2})\ \E[S^A_{\pi_1}]\ \E[S^B_{{\pi_2}^{-1}\gamma_1\gamma_2'}] 
		= \\ \Wg_N({\pi_1},{\pi_2})
			\left( N^{\cyc({\pi_1})}\PPA{{\pi_1}}(N) + 
			\hbar_N N^{\aex({\pi_1})}\QQA{{\pi_1}}(N) \right) \times \\
			\times \left( N^{\cyc({\pi_2}^{-1}\gamma_1\gamma_2')}
			\PPB{{\pi_2}^{-1}\gamma_1\gamma_2'}(N) 
				+ \hbar_N N^{\aex({\pi_2}^{-1}\gamma_1\gamma_2')}
				\QQB{{\pi_2}^{-1}\gamma_1\gamma_2'}(N) \right).
	\end{multline*}        
Expanding the brackets and summing $({\pi_1},{\pi_2})$ over $\Sy{d}^2$, we arrive at the 
classical/quantum decomposition of $\tau_{12}^{(N)}$.
	
	\begin{theorem}
	\label{thm:TwoPointClassicalQuantumDecomposition}
	We have
		\begin{equation*}
			\tau_{12}^{(N)} = \Classical_{12}^{(N)} + \hbar_N \Quantum_{12}^{(N)},
		\end{equation*}
	where
		\begin{multline*}
			\Classical_{12}^{(N)} =  \frac{1}{N^2}\sum_{({\pi_1},{\pi_2}) \in \Sy{d}^2} 
			N^{\cyc({\pi_1})+\cyc({\pi_2}^{-1}\gamma_1\gamma_2')}\times \\ \times \Wg_N({\pi_1},{\pi_2})\
			\PPA{{\pi_1}}(N)\ \PPB{{\pi_2}^{-1}\gamma_1\gamma_2'}(N)
		\end{multline*}
	and
		\begin{multline*}
			\Quantum_{12}^{(N)} =\\ \begin{aligned}\hspace{5ex} & \frac{1}{N^2}\sum_{({\pi_1},{\pi_2}) \in \Sy{d}^2} 
			\bigg{(}
			N^{\cyc({\pi_1})+\aex({\pi_2}^{-1}\gamma_1\gamma_2')} \Wg_N({\pi_1},{\pi_2})
				\PPA{{\pi_1}}(N) \QQB{{\pi_2}^{-1}\gamma_1\gamma_2'}(N) \\
			&+
				N^{\aex({\pi_1})+\cyc({\pi_2}^{-1}\gamma_1\gamma_2')}\Wg_N({\pi_1},{\pi_2})
				\QQA{{\pi_1}}(N)\PPB{{\pi_2}^{-1}\gamma_1\gamma_2'}(N) \\
			&+  \hbar_N N^{\aex({\pi_1})+\aex({\pi_2}^{-1}\gamma_1\gamma_2')}\Wg_N({\pi_1},{\pi_2})
				\QQA{{\pi_1}}(N)\QQB{{\pi_2}^{-1}\gamma_1\gamma_2'}(N)
			\bigg{)}.
		\end{aligned} \end{multline*}
	\end{theorem}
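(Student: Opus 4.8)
The plan is to prove \cref{thm:TwoPointClassicalQuantumDecomposition} by repeating, essentially verbatim, the assembly that produced \cref{thm:ClassicalQuantumDecomposition} in the one-point case. In fact all of the genuine work has already been carried out in the subsections above: unitary invariance, the double trace expansion and its ``smashing together'', the Weingarten convolution formula \eqref{eqn:WeingartenConvolutionFormula}, the identification of $S^A_{{\pi_1}}$ and $S^B_{{\pi_2}^{-1}\gamma_1\gamma_2'}$ with images of Biasimirs, and --- most importantly --- the classical/quantum decompositions
\[
\E[S^A_{\pi_1}] = N^{\cyc({\pi_1})}\PPA{{\pi_1}}(N) + \hbar_N N^{\aex({\pi_1})}\QQA{{\pi_1}}(N)
\]
and
\[
\E[S^B_{{\pi_2}^{-1}\gamma_1\gamma_2'}] = N^{\cyc({\pi_2}^{-1}\gamma_1\gamma_2')}\PPB{{\pi_2}^{-1}\gamma_1\gamma_2'}(N) + \hbar_N N^{\aex({\pi_2}^{-1}\gamma_1\gamma_2')}\QQB{{\pi_2}^{-1}\gamma_1\gamma_2'}(N).
\]
It remains only to put these pieces together.

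Concretely, I would substitute the two displayed decompositions into the formula
\[
\tau_{12}^{(N)}=\frac{1}{N^2} \sum_{({\pi_1},{\pi_2}) \in \Sy{d}^2} \Wg_N({\pi_1},{\pi_2})\ \E[S^A_{{\pi_1}}]\ \E[S^B_{{\pi_2}^{-1}\gamma_1\gamma_2'}]
\]
and expand the product inside the sum into four terms. The term formed from the two $\mathbf{P}$-parts carries no factor of $\hbar_N$ --- since, by \cref{prop:BianePolynomials} and formula \eqref{eqn:ClassicalComponent}, $\PPA{{\pi_1}}(N)$ and $\PPB{{\pi_2}^{-1}\gamma_1\gamma_2'}(N)$ are $\hbar_N$-free polynomials in the pure moments $\E\tr A_N^k$ and $\E\tr B_N^k$ --- and collecting it over $({\pi_1},{\pi_2}) \in \Sy{d}^2$ gives exactly $\Classical_{12}^{(N)}$. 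Each of the three remaining terms carries at least one explicit factor of $\hbar_N$; pulling a single $\hbar_N$ out front and summing over $({\pi_1},{\pi_2}) \in \Sy{d}^2$ produces exactly $\hbar_N\Quantum_{12}^{(N)}$ in the stated form, the three summands corresponding to $\mathbf{P}\otimes\mathbf{Q}$, $\mathbf{Q}\otimes\mathbf{P}$ and $\mathbf{Q}\otimes\mathbf{Q}$ respectively.

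There is no real obstacle here; the theorem is a bookkeeping exercise, and the only thing worth flagging is that \cref{prop:BianePolynomials} and \cref{prop:Antiexceedance} --- which supply the decomposition of a Biasimir and the normalizations used in the definitions of $\PPB{{\pi_2}^{-1}\gamma_1\gamma_2'}$ and $\QQB{{\pi_2}^{-1}\gamma_1\gamma_2'}$ --- were stated for an \emph{arbitrary} permutation in $\Sy{d}$, with nothing in their statements or proofs using that the distinguished permutation is a single full cycle. Here the distinguished permutation is $\gamma_1\gamma_2'$, a product of two disjoint cycles, but this causes no difficulty whatsoever and the argument goes through unchanged. The only visible difference from the proof of \cref{thm:ClassicalQuantumDecomposition} is the overall prefactor $N^{-2}$ in place of $N^{-1}$, coming from the two normalized traces; the same scheme would extend to the expected product of any finite number of traces, with prefactor $N^{-k}$ for $k$ traces.
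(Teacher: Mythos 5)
Your proposal is correct and mirrors the paper's own derivation exactly: the paper also obtains the theorem by substituting the two Biasimir-based decompositions of $\E[S^A_{{\pi_1}}]$ and $\E[S^B_{{\pi_2}^{-1}\gamma_1\gamma_2'}]$ into the Weingarten-expanded formula for $\tau_{12}^{(N)}$, expanding the product into four terms, and identifying the $\hbar_N$-free term with $\Classical_{12}^{(N)}$ and the remaining three with $\hbar_N\Quantum_{12}^{(N)}$. Your remark that \cref{prop:BianePolynomials} and \cref{prop:Antiexceedance} apply to arbitrary permutations (not just full cycles, so $\gamma_1\gamma_2'$ is covered) is a correct and worthwhile observation, implicit in the paper's treatment.
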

	
\subsection{Covariance asymptotics}
We now apply the above exact computations to obtain the semiclassical asymptotics
of $\tau_{12}^{(N)}$.  The analysis is a direct generalization of the mean value asymptotic
analysis carried out in \cref{sec:MeanValueAsymptotics}.  As in  
\cref{sec:MeanValueAsymptotics}, we work under the hypotheses of \cref{thm:Main}.

\subsubsection{Quantum asymptotics}

The following combinatorial result is a reformulation 
of the result of Biane \cite[page 173]{Biane98}, see \cref{sec:proof-of-lem:Defect} 
for a detailed corrected proof.
\begin{lemma}
\label{lem:SecondDefect} 
For any permutations ${\pi_1},{\pi_2} \in \Sy{d}$, we have
			\begin{equation}
				\label{eq:defects-nice2}
				\defect({\pi_1})+\defect({\pi_2}^{-1}\gamma_1\gamma_2') \leq
				2\genus({\pi_1},{\pi_1}^{-1}{\pi_2},{\pi_2}^{-1}\gamma_1\gamma_2').
			\end{equation}
\end{lemma}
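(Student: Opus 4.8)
The plan is to deduce \cref{lem:SecondDefect} from the one-trace inequality \eqref{eq:defects-nice} of \cref{lem:Defect} by a short reduction which \emph{cuts} the long cycle $\gamma=(1\ 2\ \dots\ d)$ into the two shorter cycles constituting $\gamma_1\gamma_2'$ at the cost of a single transposition. First I would record the elementary identity
\[ \gamma_1\gamma_2' \;=\; \gamma\cdot(d_1\ d)\qquad\text{in }\Sy{d},\ \ d=d_1+d_2, \]
checked directly (right-multiplying $(1\ 2\ \dots\ d)$ by $(d_1\ d)$ splits it at position $d_1$; the boundary cases $d_1=1$ or $d_2=1$ are immediate). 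Fixing an arbitrary pair $(\pi_1,\pi_2)\in\Sy{d}^2$ and writing $\sigma_3:=\pi_2^{-1}\gamma_1\gamma_2'$ and $\sigma_3':=\pi_2^{-1}\gamma$, this gives $\sigma_3=\sigma_3'\cdot(d_1\ d)$, so $\sigma_3$ and $\sigma_3'$ differ by right-multiplication by a transposition. Since $\pi_1\cdot(\pi_1^{-1}\pi_2)\cdot\sigma_3'=\gamma$, \cref{lem:Defect} applied to this same pair gives $\defect(\pi_1)+\defect(\sigma_3')\le 2g'$ where $g':=\genus(\pi_1,\pi_1^{-1}\pi_2,\sigma_3')$, and since $\pi_1\cdot(\pi_1^{-1}\pi_2)\cdot\sigma_3=\gamma_1\gamma_2'$, the target \eqref{eq:defects-nice2} is precisely $\defect(\pi_1)+\defect(\sigma_3)\le 2g$ with $g:=\genus(\pi_1,\pi_1^{-1}\pi_2,\sigma_3)$.

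It remains to track how the two sides change when $\sigma_3'$ is replaced by $\sigma_3$. As $|\gamma|=d-1$ and $|\gamma_1\gamma_2'|=d-2$, the definition of genus gives $2(g-g')=(|\sigma_3|-|\sigma_3'|)+1$; since $|\cdot|=d-\cyc(\cdot)$ this rearranges to $\cyc(\sigma_3)-\cyc(\sigma_3')=1-2(g-g')$. The only combinatorial input needed is the elementary fact that $\aex(\sigma t)-\aex(\sigma)\le 1$ for any permutation $\sigma$ and transposition $t=(a\ b)$: indeed $\sigma t$ agrees with $\sigma$ off $\{a,b\}$ and merely swaps the values $\sigma(a),\sigma(b)$, so with $a<b$ the change in the antiexceedance count equals $[a<\sigma(a)\le b]-[a<\sigma(b)\le b]\in\{-1,0,1\}$. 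Combining these,
\begin{multline*}
\defect(\sigma_3)-\defect(\sigma_3')=\bigl(\aex(\sigma_3)-\aex(\sigma_3')\bigr)-\bigl(\cyc(\sigma_3)-\cyc(\sigma_3')\bigr)\\
\le 1-\bigl(1-2(g-g')\bigr)=2(g-g'),
\end{multline*}
and adding $\defect(\pi_1)$ together with $\defect(\pi_1)+\defect(\sigma_3')\le 2g'$ yields $\defect(\pi_1)+\defect(\sigma_3)\le 2g$, which is exactly \eqref{eq:defects-nice2}.

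Beyond \cref{lem:Defect} itself, I do not expect any serious obstacle; the delicate point is purely the sign bookkeeping in the genus comparison — arranging that the $+1$ from $|\gamma|-|\gamma_1\gamma_2'|$ exactly cancels against the $\pm1$ change of $\cyc(\sigma_3)$, so that the transposition's effect on the defect is absorbed \emph{uniformly}, with no case split according to whether $(d_1\ d)$ merges or splits a cycle of $\sigma_3'$. I would also note that, since the analysis of \cref{sec:Covariance} extends to expected products of $k$ traces (whose target is then a product of increasing cycles on consecutive blocks), iterating the ``cut off one block'' step $k-1$ times reduces the corresponding $k$-trace defect inequality to \cref{lem:Defect} in the same way; alternatively the whole family can be obtained directly by running Biane's combinatorial argument of \cite[page~173]{Biane98} for an arbitrary product of increasing cycles on consecutive blocks, with the gap in \cite[Lemma~8.3]{Biane98} repaired via \cref{lem:bad-things-that-can-happen-while-conjugating}.
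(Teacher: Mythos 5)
Your proof is correct, and it takes a genuinely different route from the paper's. The paper's argument (Appendix~C) does not invoke \cref{lem:Defect} at all; instead it re-runs the whole chain from Biane's Lemma~8.2, using the identity $\aex(\gamma\tau^{-1})+\aex(\tau)=d+1$ together with two applications of the bound $\aex(\sigma)-\aex(\tau)\le|\sigma^{-1}\tau|$ (one to absorb a transposition $\epsilon$ with $\gamma\epsilon=\gamma_1\gamma_2'$, one for the $|\sigma^{-1}\tau|$ term), and then substitutes $\tau:=\pi_1^{-1}\gamma\epsilon$, $\sigma:=\pi_2^{-1}\gamma\epsilon$. You instead take \cref{lem:Defect} as a black box and track, through the single transposition $t=(d_1\ d)$ with $\gamma t=\gamma_1\gamma_2'$, how both sides of the inequality move when the base permutation changes from $\gamma$ to $\gamma_1\gamma_2'$; the genus comparison $2(g-g')=(|\sigma_3|-|\sigma_3'|)+1$ and the elementary bound $\aex(\sigma t)-\aex(\sigma)\le 1$ (which is the $|\sigma^{-1}\tau|=1$ special case of Biane's Lemma~8.2(2)) then close the argument with no case split. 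Both proofs rest on the same underlying combinatorial ingredient, but your reduction isolates the transposition increment in one place and, as you note, iterates verbatim to any number of trace factors by cutting off one block per transposition — a uniformity the paper's re-derivation does not make explicit.
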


	\begin{theorem}
		For any bounded $\hbar_N=O(1)$, 
                the quantum part
		$\Quantum_{12}^{(N)}$ of $\tau_{12}^{(N)}$ is $O(1)$ as $N \rightarrow \infty$.
	\end{theorem}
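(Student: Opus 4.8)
The plan is to repeat the argument of \cref{thm:QuantumAsymptotics} almost verbatim, with the full forward cycle $\gamma$ replaced everywhere by the product of disjoint cycles $\gamma_1\gamma_2'$ and the overall prefactor $N^{-1}$ replaced by $N^{-2}$. First, starting from \cref{thm:TwoPointClassicalQuantumDecomposition} and substituting $\aex(\pi)=\cyc(\pi)+\defect(\pi)$, I would rewrite
\begin{equation*}
\Quantum_{12}^{(N)}=\sum_{({\pi_1},{\pi_2})\in\Sy{d}^2}N^{\cyc({\pi_1})+\cyc({\pi_2}^{-1}\gamma_1\gamma_2')-2}\,\Wg_N({\pi_1},{\pi_2})\,\mathbf{R}_{({\pi_1},{\pi_2})}(N),
\end{equation*}
where $\mathbf{R}_{({\pi_1},{\pi_2})}(N)$ collects the three products $N^{\defect({\pi_2}^{-1}\gamma_1\gamma_2')}\PPA{{\pi_1}}(N)\QQB{{\pi_2}^{-1}\gamma_1\gamma_2'}(N)$, $N^{\defect({\pi_1})}\QQA{{\pi_1}}(N)\PPB{{\pi_2}^{-1}\gamma_1\gamma_2'}(N)$, and $\hbar_N N^{\defect({\pi_1})+\defect({\pi_2}^{-1}\gamma_1\gamma_2')}\QQA{{\pi_1}}(N)\QQB{{\pi_2}^{-1}\gamma_1\gamma_2'}(N)$; it then suffices to show every summand is $O(1)$.

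Next I would control $\mathbf{R}_{({\pi_1},{\pi_2})}(N)$: each of the four polynomials $\PPA{{\pi_1}}(N),\QQA{{\pi_1}}(N),\PPB{{\pi_2}^{-1}\gamma_1\gamma_2'}(N),\QQB{{\pi_2}^{-1}\gamma_1\gamma_2'}(N)$ was written above in this section as a polynomial in the convergent moments $\E\tr A_N^k$, $\E\tr B_N^k$ (and, in the quantum cases, in $\hbar_N$ and $N^{-1}$), hence all are $O(1)$; combining this with $\hbar_N=O(1)$ and with $\defect\ge 0$ from \cref{lem:Defect} yields $\mathbf{R}_{({\pi_1},{\pi_2})}(N)=O(N^{\defect({\pi_1})+\defect({\pi_2}^{-1}\gamma_1\gamma_2')})$.

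The remaining point is the Weingarten prefactor, and here the only thing to check is that the two cycles of $\gamma_1\gamma_2'$ exactly soak up the extra power $N^{-2}$. Using $|\pi|=d-\cyc(\pi)$ for all $\pi\in\Sy{d}$, so that $|\gamma_1\gamma_2'|=d-2$, and feeding the expansion \eqref{eqn:WeingartenReduced} into the prefactor while invoking the factorisation ${\pi_1}\cdot({\pi_1}^{-1}{\pi_2})\cdot({\pi_2}^{-1}\gamma_1\gamma_2')=\gamma_1\gamma_2'$, I expect to obtain, for $N\ge d$,
\begin{equation*}
N^{\cyc({\pi_1})+\cyc({\pi_2}^{-1}\gamma_1\gamma_2')-2}\,\Wg_N({\pi_1},{\pi_2})=O\!\left(N^{-2\genus({\pi_1},{\pi_1}^{-1}{\pi_2},{\pi_2}^{-1}\gamma_1\gamma_2')}\right),
\end{equation*}
exactly as in the one-trace case. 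Multiplying the two estimates makes each term of $\Quantum_{12}^{(N)}$ equal to $O(N^{\defect({\pi_1})+\defect({\pi_2}^{-1}\gamma_1\gamma_2')-2\genus({\pi_1},{\pi_1}^{-1}{\pi_2},{\pi_2}^{-1}\gamma_1\gamma_2')})$, which is $O(1)$ by \cref{lem:SecondDefect}; summing over the finite set $\Sy{d}^2$ then concludes. The only non-mechanical ingredient is the two-trace defect inequality \cref{lem:SecondDefect}, which plays the role that \cref{lem:Defect} played for the mean value; granting that lemma, the rest is bookkeeping parallel to \cref{thm:QuantumAsymptotics}, and I expect no genuine obstacle.
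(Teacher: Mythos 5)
Your proposal matches the paper's proof essentially verbatim: the same rewriting of $\Quantum_{12}^{(N)}$ in terms of $\mathbf{R}_{(\pi_1,\pi_2)}(N)$, the same $O(N^{\defect(\pi_1)+\defect(\pi_2^{-1}\gamma_1\gamma_2')})$ bound on $\mathbf{R}$ via nonnegativity of the defect and boundedness of the $\overline{\mathbf{P}},\overline{\mathbf{Q}}$ polynomials, the same genus bound on the Weingarten prefactor via \eqref{eqn:WeingartenReduced}, and the same concluding appeal to \cref{lem:SecondDefect}. Nothing is missing.
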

	\begin{proof}
		The quantum part of $\tau_{12}^{(N)}$ may be 
		written as 
			\begin{multline}
                         \label{eq:12quantumsum}
			\Quantum^{(N)}_{12} = \\ \sum_{({\pi_1},{\pi_2}) \in \Sy{d}^2} 
			N^{\cyc({\pi_1})+\cyc({\pi_2}^{-1}\gamma_1\gamma_2')-2} \Wg_N({\pi_1},{\pi_2})\
			\mathbf{R}_{({\pi_1},{\pi_2})}(N)
			\end{multline}
		where 
			\begin{align*}
			\mathbf{R}_{({\pi_1},{\pi_2})}(N) =&
			N^{\defect({\pi_2}^{-1}\gamma_1\gamma_2')}
				\ \PPA{{\pi_1}}(N)\ \QQB{{\pi_2}^{-1}\gamma_1\gamma_2'}(N) \\
				&+N^{\defect({\pi_1})}\
				\QQA{{\pi_1}}(N)\ \PPB{{\pi_2}^{-1}\gamma_1\gamma_2'}(N)\\
			&+\hbar_N N^{\defect({\pi_1})+\defect({\pi_2}^{-1}\gamma_1\gamma_2')}
				\ \QQA{{\pi_1}}(N)\ \QQB{{\pi_2}^{-1}\gamma_1\gamma_2'}(N).
			\end{align*}
		We will show that each term in the sum \eqref{eq:12quantumsum} is $O(1)$.	
		
		By the first part of \cref{lem:Defect}, nonnegativity of the defect statistic,
                and \cref{prop:PPA-QQA-asymptoticalyO1}
		we have
			\begin{equation*}
				\mathbf{R}_{({\pi_1},{\pi_2})}(N) = 
				O\left(N^{\defect({\pi_1})+\defect({\pi_2}^{-1}\gamma_1\gamma_2')}\right)
			\end{equation*}
		for each $({\pi_1},{\pi_2}) \in \Sy{d}^2$. 
		
		Consider now the asymptotics of the Weingarten factor,
			\begin{equation*}
			N^{\cyc({\pi_1})+\cyc({\pi_2}^{-1}\gamma_1\gamma_2')-2} \Wg_N({\pi_1},{\pi_2}).
			\end{equation*}
		By \eqref{eqn:WeingartenReduced}, we have
		\begin{multline*}
			N^{\cyc({\pi_1})+\cyc({\pi_2}^{-1}\gamma_1\gamma_2')-2}\Wg_N({\pi_1},{\pi_2})
			\\ \begin{aligned}  &= N^{\cyc({\pi_1})+\cyc({\pi_2}^{-1}\gamma_1\gamma_2')-2} 
				\frac{(-1)^{|{\pi_1}^{-1}{\pi_2}|}}{N^{d+|{\pi_1}^{-1}{\pi_2}|}}
				\sum_{g=0}^{\infty} \frac{\vec{W}_g({\pi_1},{\pi_2})}{N^{2g}} \\
		&= N^{-|{\pi_1}|-|{\pi_1}^{-1}{\pi_2}|-|{\pi_2}^{-1}\gamma_1\gamma_2'| + 
		|\gamma_1\gamma_2'|}\ 
		(-1)^{|{\pi_1}^{-1}{\pi_2}|}\sum_{g=0}^{\infty} \frac{\vec{W}_g({\pi_1},{\pi_2})}{N^{2g}} \\
		&= (-1)^{|{\pi_1}^{-1}{\pi_2}|} \
			N^{-2\genus({\pi_1},{\pi_1}^{-1}{\pi_2},{\pi_2}^{-1}\gamma_1\gamma_2')}
			\sum_{g=0}^{\infty} \frac{\vec{W}_g({\pi_1},{\pi_2})}{N^{2g}}.
		    \end{aligned}
		\end{multline*}
		Thus,
			\begin{equation*}
				N^{\cyc({\pi_1})+\cyc({\pi_2}^{-1}\gamma_1\gamma_2')-2} \Wg_N({\pi_1},{\pi_2})
				= O(N^{-2\genus({\pi_1},{\pi_1}^{-1}{\pi_2},{\pi_2}^{-1}\gamma_1\gamma_2')}).
			\end{equation*}
			
		We conclude that the order of the $({\pi_1},{\pi_2})$ term in the sum is
			\begin{equation*}
				O\left(N^{\defect({\pi_1})+\defect({\pi_2}^{-1}\gamma_1\gamma_2') -
				2\genus({\pi_1},{\pi_1}^{-1}{\pi_2},{\pi_2}^{-1}\gamma_1\gamma_2')}\right).
			\end{equation*}
		By \cref{lem:SecondDefect}, the exponent
			\begin{equation*}
				\defect({\pi_1})+\defect({\pi_2}^{-1}\gamma_1\gamma_2') -
				2\genus({\pi_1},{\pi_1}^{-1}{\pi_2},{\pi_2}^{-1}\gamma_1\gamma_2')
			\end{equation*}
		is nonpositive; consequently, each term of $\Quantum_{12}^{(N)}$ is $O(1)$,
		as required.
	\end{proof}

\subsubsection{Classical asymptotics}
	\begin{theorem}
		For each $N \geq d$,
		the classical part of $\tau_{12}^{(N)}$ admits, for each $N \geq d$,
		an absolutely convergent series expansion of the form
			\begin{equation*}
				\Classical_{12}^{(N)} = \sum_{k=0}^{\infty} \frac{e_k^{(12)}(N)}{N^{2k}},
			\end{equation*}
		the coefficients of which are given by 
			\begin{multline*}
				e_k^{(12)}(N) := \sum_{\substack{(g,h) \in \N^2 \\ g+h=k}} 
				\sum_{\substack{({\pi_1},{\pi_2}) \in \Sy{d}^2 \\
				\genus({\pi_1},{\pi_1}^{-1}{\pi_2},{\pi_2}^{-1}\gamma_1\gamma_2')=h}}
				(-1)^{|{\pi_1}^{-1}{\pi_2}|} \times \\ \times \vec{W}_g({\pi_1},{\pi_2})\
				\PPA{{\pi_1}}(N)\ \PPB{{\pi_2}^{-1}\gamma_1\gamma_2'}(N).
			\end{multline*}
	\end{theorem}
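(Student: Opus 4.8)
The plan is to transcribe, essentially line for line, the proof of \cref{thm:ClassicalAsymptotics}, now feeding it the two-point classical/quantum decomposition of \cref{thm:TwoPointClassicalQuantumDecomposition} in place of \cref{thm:ClassicalQuantumDecomposition}. The only structural difference is that the single forward cycle $\gamma$ is replaced by the product $\gamma_1\gamma_2'$ of two disjoint cycles, and the prefactor $\frac1N$ is replaced by $\frac1{N^2}$; everything else is bookkeeping.

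First I would insert the Weingarten expansion \eqref{eqn:WeingartenReduced} into the formula for $\Classical_{12}^{(N)}$ supplied by \cref{thm:TwoPointClassicalQuantumDecomposition}. In the resulting sum over $({\pi_1},{\pi_2})\in\Sy{d}^2$, the power of $N$ multiplying the fixed quantities $\PPA{{\pi_1}}(N)$ and $\PPB{{\pi_2}^{-1}\gamma_1\gamma_2'}(N)$ is
\[
-2+\cyc({\pi_1})+\cyc({\pi_2}^{-1}\gamma_1\gamma_2')-d-|{\pi_1}^{-1}{\pi_2}|.
\]
Using the identity $\cyc(\sigma)=d-|\sigma|$, valid for every $\sigma\in\Sy{d}$, this equals $d-2-|{\pi_1}|-|{\pi_1}^{-1}{\pi_2}|-|{\pi_2}^{-1}\gamma_1\gamma_2'|$.

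Next I would record two elementary facts. Since $\gamma_1$ and $\gamma_2'$ are disjoint cycles of lengths $d_1$ and $d_2=d-d_1$, we have $|\gamma_1\gamma_2'|=(d_1-1)+(d_2-1)=d-2$; and since ${\pi_1}\cdot{\pi_1}^{-1}{\pi_2}\cdot{\pi_2}^{-1}\gamma_1\gamma_2'=\gamma_1\gamma_2'$, the definition of the genus yields
\[
|{\pi_1}|+|{\pi_1}^{-1}{\pi_2}|+|{\pi_2}^{-1}\gamma_1\gamma_2'|=2\genus({\pi_1},{\pi_1}^{-1}{\pi_2},{\pi_2}^{-1}\gamma_1\gamma_2')+(d-2).
\]
Substituting, the power of $N$ above collapses to $-2\genus({\pi_1},{\pi_1}^{-1}{\pi_2},{\pi_2}^{-1}\gamma_1\gamma_2')$, exactly as it did at genus zero in the one-point case, the extra factor $N^{-2}$ coming from the $\frac1{N^2}$ prefactor being precisely compensated by $|\gamma_1\gamma_2'|=d-2$. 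Writing $h$ for this genus and $g$ for the summation index of the Weingarten series, and collecting the coefficient of $N^{-2k}$ over all pairs with $g+h=k$, produces the asserted formula for $e_k^{(12)}(N)$, which is in particular a polynomial in the moments $\E\tr A_N^j$ and $\E\tr B_N^j$.

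Finally I would dispose of the convergence and reindexing: for $N\ge d$ each of the finitely many series $\sum_{g\ge 0}\vec{W}_g({\pi_1},{\pi_2})\,N^{-2g}$ converges absolutely by \eqref{eqn:WeingartenReduced}, the factors $\PPA{{\pi_1}}(N)$ and $\PPB{{\pi_2}^{-1}\gamma_1\gamma_2'}(N)$ are for each fixed $N$ simply real numbers, and $\Sy{d}^2$ is finite; hence the double series may be freely rearranged by the index $k$, and the resulting expansion of $\Classical_{12}^{(N)}$ in powers of $N^{-2}$ is absolutely convergent. I do not expect a genuine obstacle here: the result is a direct transcription of \cref{thm:ClassicalAsymptotics}, and the single point demanding attention is verifying the cycle-length bookkeeping $|\gamma_1\gamma_2'|=d-2$, which is what keeps the genus-zero term in the lead and forces the classical part of the two-point function to stabilize.
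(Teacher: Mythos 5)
Your proof is correct and follows essentially the same route as the paper's: insert the Weingarten expansion \eqref{eqn:WeingartenReduced} into the formula from \cref{thm:TwoPointClassicalQuantumDecomposition}, use $\cyc(\sigma)=d-|\sigma|$ and the genus identity with $|\gamma_1\gamma_2'|=d-2$ to collapse the power of $N$ to $-2\genus({\pi_1},{\pi_1}^{-1}{\pi_2},{\pi_2}^{-1}\gamma_1\gamma_2')$, then reindex by $k=g+h$. The only difference is that you spell out the cycle-length bookkeeping, which the paper leaves implicit in its displayed chain of equalities.
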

	
	\begin{proof}
		According to \cref{thm:TwoPointClassicalQuantumDecomposition}
		and the expansion \eqref{eqn:WeingartenReduced}, we have
\begin{align*}
				\Classical_{12}^{(N)} &= 
\sum_{({\pi_1},{\pi_2}) \in \Sy{d}^2} 
				N^{\cyc({\pi_1})+\cyc({\pi_2}^{-1}\gamma)-2} 
                                \times \\[-2ex] & \hspace{15ex} \times 
                                \Wg_N({\pi_1},{\pi_2})\ 
				\PPA{{\pi_1}}(N)\ \PPB{{\pi_2}^{-1}\gamma_1\gamma_2'}(N) \\[2ex]
				&= \sum_{({\pi_1},{\pi_2}) \in \Sy{d}^2} 
				N^{-2\genus({\pi_1},{\pi_1}^{-1}{\pi_2},{\pi_2}^{-1}\gamma_1\gamma_2')}
				(-1)^{|{\pi_1}^{-1}{\pi_2}|} \times \\[-2ex] & \hspace{15ex} \times
				\PPA{{\pi_1}}(N)\ \PPB{{\pi_2}^{-1}\gamma_1\gamma_2'}(N)
				\sum_{g=0}^{\infty} \frac{\vec{W}_g({\pi_1},{\pi_2})}{N^{2g}} \\[2ex]
				&=\sum_{g,h=0}^{\infty} \frac{1}{N^{2(g+h)}}
				\sum_{\substack{({\pi_1},{\pi_2}) \in \Sy{d}^2 \\
				\genus({\pi_1},{\pi_1}^{-1}{\pi_2},{\pi_2}^{-1}\gamma_1\gamma_2')=h}}
				(-1)^{|{\pi_1}^{-1}{\pi_2}|}
				\times \\ & \hspace{15ex} \times
				\vec{W}_g({\pi_1},{\pi_2})\
				\PPA{{\pi_1}}(N)\ \PPB{{\pi_2}^{-1}\gamma_1\gamma_2'}(N) \\[2ex]
				&= \sum_{k=0}^{\infty} \sum_{\substack{g,h\geq 0\\ g+h=k}} \frac{1}{N^k}
				\sum_{\substack{({\pi_1},{\pi_2}) \in \Sy{d}^2 \\
				\genus({\pi_1},{\pi_1}^{-1}{\pi_2},{\pi_2}^{-1}\gamma_1\gamma_2')=h}}
				(-1)^{|{\pi_1}^{-1}{\pi_2}|}
				\times \\ & \hspace{15ex} \times
				\vec{W}_g({\pi_1},{\pi_2})\
				\PPA{{\pi_1}}(N)\ \PPB{{\pi_2}^{-1}\gamma_1\gamma_2'}(N).
\end{align*}
	\end{proof}

\subsubsection{Semiclassical asymptotics}
Combining the quantum/classical decomposition of $\tau_{12}^{(N)}$,
the boundedness of $\Quantum_{12}^{(N)}$, and the convergent series
expansion of $\Classical_{12}^{(N)}$, we obtain the following semiclassical
asymptotic expansion of $\tau_{12}^{(N)}$.

	\begin{corollary}
	\label{cor:TwoPointSemiclassical}
		For any sequence $\hbar_N=O(1)$, we have 
			\begin{equation*}
				\tau_{12}^{(N)} = \Classical_{12}^{(N)} + O(\hbar_N).
			\end{equation*}
		In particular, if $\hbar_N = o(N^{-2l})$, then
			\begin{equation*}
				\tau_{12}^{(N)} = \sum_{k=0}^l \frac{e_k^{(12)}(N)}{N^{2k}} + o\left(\frac{1}{N^{2l}}\right).
			\end{equation*}
	\end{corollary}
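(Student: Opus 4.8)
The plan is to deduce Corollary~\ref{cor:TwoPointSemiclassical} from the three preceding results about $\tau_{12}^{(N)}$ by exactly the argument that produced Corollary~\ref{cor:SemiclassicalAsymptotics} in the one-point case. First I would invoke Theorem~\ref{thm:TwoPointClassicalQuantumDecomposition} to write $\tau_{12}^{(N)} = \Classical_{12}^{(N)} + \hbar_N \Quantum_{12}^{(N)}$, and then apply the preceding theorem asserting that $\Quantum_{12}^{(N)} = O(1)$ whenever $\hbar_N = O(1)$. Together these give $\tau_{12}^{(N)} - \Classical_{12}^{(N)} = \hbar_N \Quantum_{12}^{(N)} = O(\hbar_N)$, which is already the first assertion; note that nothing beyond $\hbar_N = O(1)$ is used at this stage.

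For the second assertion I would start from the absolutely convergent expansion $\Classical_{12}^{(N)} = \sum_{k\geq 0} e_k^{(12)}(N) N^{-2k}$, valid for $N \geq d$, supplied by the preceding classical-asymptotics theorem, and split off the first $l+1$ terms. Since $\hbar_N = o(N^{-2l})$ in particular satisfies $\hbar_N = O(1)$, the first assertion applies and gives
\[
\tau_{12}^{(N)} = \sum_{k=0}^{l} \frac{e_k^{(12)}(N)}{N^{2k}} + \sum_{k>l} \frac{e_k^{(12)}(N)}{N^{2k}} + o\!\left(\frac{1}{N^{2l}}\right),
\]
so it remains only to check that the tail $\sum_{k>l} e_k^{(12)}(N) N^{-2k}$ is $o(N^{-2l})$.

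To control this tail uniformly in $N$ I would observe that $e_k^{(12)}(N)$ depends on $N$ solely through the quantities $\PPA{{\pi_1}}(N)$ and $\PPB{{\pi_2}^{-1}\gamma_1\gamma_2'}(N)$, all other ingredients (signs, genus values, and the monotone-walk counts) being integers independent of $N$. By the universal formula~\eqref{eqn:ClassicalComponent} these quantities are fixed polynomials in the moments $\E\tr A_N^j$, $\E\tr B_N^j$ with $j \leq |p|,|q|$, and under the hypotheses of Theorem~\ref{thm:Main} those moments converge, hence are bounded; thus by (the $B$-analogue of) Proposition~\ref{prop:PPA-QQA-asymptoticalyO1} there is a constant $c_k$, independent of $N$ for $N$ large, with $|e_k^{(12)}(N)| \leq c_k$. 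Absolute convergence of the Weingarten series~\eqref{eqn:WeingartenReduced} for $N \geq d$ forces $\sum_k c_k (d+1)^{-2k} < \infty$, so for every $N \geq d+1$ the tail obeys
\[
\sum_{k>l} \frac{|e_k^{(12)}(N)|}{N^{2k}} \leq \frac{1}{N^{2(l+1)}} \sum_{m\geq 0} \frac{c_{m+l+1}}{(d+1)^{2m}} = O\!\left(\frac{1}{N^{2(l+1)}}\right) = o\!\left(\frac{1}{N^{2l}}\right),
\]
which finishes the proof.

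Every step here is a verbatim transcription of the corresponding step behind Corollary~\ref{cor:SemiclassicalAsymptotics} --- the exact classical/quantum decomposition, the $O(1)$ bound on the quantum part, the topological expansion of the classical part, and the uniform tail estimate --- so I do not anticipate any genuine obstacle. The only point that needs a few lines rather than a bare citation is the $N$-uniform bound on the coefficients $e_k^{(12)}(N)$, and that is immediate from the convergence of the moment sequences assumed in Theorem~\ref{thm:Main} together with the universal form~\eqref{eqn:ClassicalComponent} of the classical components.
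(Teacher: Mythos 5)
Your argument is correct and is exactly the route the paper intends: combine the exact classical/quantum decomposition of $\tau_{12}^{(N)}$, the $O(1)$ bound on its quantum part, and the absolutely convergent topological expansion of its classical part, just as for \cref{cor:SemiclassicalAsymptotics}. The one place where you supply slightly more detail than the paper --- the $N$-uniform bound on $e_k^{(12)}(N)$ and the geometric tail estimate using absolute convergence of the Weingarten series at $N=d+1$ --- is carried out correctly; the key observations are that $\PPA{\pi_1}(N)$ and $\PPB{\pi_2^{-1}\gamma_1\gamma_2'}(N)$ are bounded under the hypotheses of \cref{thm:Main}, that for each fixed pair $(\pi_1,\pi_2)$ the genus $h$ is a fixed integer so only one summand in $\sum_{g+h=k}$ contributes, and that $\sum_g \vec{W}_g(\pi_1,\pi_2)/(d+1)^{2g}$ converges for each of the finitely many pairs.
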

	
We may now combine \cref{cor:SemiclassicalAsymptotics} and 
\cref{cor:TwoPointSemiclassical} to obtain the 
semiclassical/large-dimension asymptotics of the difference 
	\begin{equation*}
		\tau_{12}^{(N)} - \tau_1^{(N)}\tau_2^{(N)}.
	\end{equation*}
According to \cref{cor:TwoPointSemiclassical}, 
provided $\hbar_N \to 0$ as $N \to \infty$, we have
	\begin{equation*}
		\tau_{12}^{(N)} = \Classical_{12}^{(N)} + o(1)
	\end{equation*}
as $N \to \infty$.  Moreover, by \cref{cor:SemiclassicalAsymptotics},
we have that
	\begin{equation*}
		\tau_1^{(N)}\tau_2^{(N)} = \Classical_1^{(N)}\Classical_2^{(N)} + o(1)
	\end{equation*}
in this same regime, where $\Classical_1^{(N)}$ is the classical part of 
$\tau_1^{(N)}$ and $\Classical_2^{(N)}$ is the classical part of $\tau_2^{(N)}$.
Thus, we have that
	\begin{equation*}
		\tau_{12}^{(N)} - \tau_1^{(N)}\tau_2^{(N)} =  \Classical_{12}^{(N)} -
		\Classical_1^{(N)}\Classical_2^{(N)} + o(1)
	\end{equation*}
in the semiclassical/large-dimension limit.  This means that the asymptotics of 
$\tau_{12}^{(N)} - \tau_1^{(N)}\tau_2^{(N)}$ in the semiclassical/large-dimension
limit coincide with the corresponding classical random matrix asymptotics in the
large-dimension limit, up to replacing the Newton power-sum symmetric functions
with the BPP symmetric functions.  Thus, any computation of the covariance
of the Newton observables $\langle \overline{p}_{k_1}^{(N)}\overline{p}_{k_2}^{(N)} \rangle_c$  
of the classical system \eqref{eqn:ContinuousParticleEnsemble} 
holds verbatim for the computation of the covariance of the BPP observables 
$\langle \p_{k_1}^{(N)}\p_{k_2}^{(N)} \rangle_c$ of the quantum system 
\eqref{eqn:DiscreteParticleEnsemble}.
In particular, either of the methods of \cite{HigherOrderFreeness2} or 
\cite{CollinsMatsumotoNovak2017} (the first
of which is based on the combinatorics of annular noncrossing partitions, whereas the second
uses the combinatorics of monotone walks on symmetric groups)
already developed to estimate the covariance
of traces 
\[ \E[\tr(Z_N^{k_1})\tr(Z_N^{k_2})]-\E[\tr(Z_N^{k_1})]\ \E[\tr(Z_N^{k_2})] \] 
of the classical random Hermitian matrix $Z_N=X_N+Y_N$ in the large $N$ limit
applies verbatim to estimate the covariance of traces 
\[ \E[\tr(C_N^{k_1})\tr(C_N^{k_2})]-
\E[\tr(C_N^{k_1})]\ \E[\tr(C_N^{k_2})]\] 
of the quantum random matrix $C_N=A_N+B_N$.
Either option may be selected to show that 
	\begin{equation}
		\lim_{N \to \infty} \langle \p_k^{(N)}\p_k^{(N)} \rangle_c =0,
	\end{equation}
which, by Chebyshev's inequality, implies Theorem \ref{thm:QuantumLLN}.

\section*{Acknowledgments}

Beno\^\i t Collins was supported by JSPS Kakenhi grants number 26800048 and 15KK0162,  and 
grant number ANR-14-CE25-0003.

Jonathan Novak was supported by a Simons collaboration grant.

Piotr Śniady has been supported by 
\emph{Narodowe Centrum Nauki}, grant number 2014/15/B/ST1/00064.

All three authors acknowledge a fruitful working environment at Herstmonceux Castle during
the Fields Institute meeting on Quantum Groups and Quantum Information in July 2015,
where this project was conceived, and at the Institut Henri Poincar\'e in February 2017 during the 
Combinatorics and Interactions trimester, where it was completed.

\bibliographystyle{alpha}

\bibliography{biblio-minimal}

\begin{thebibliography}{GGPN16b}

\bibitem[AGZ10]{AGZ}
G.~Anderson, A.~Guionnet, and O.~Zeitouni.
\newblock {\em An Introduction to Random Matrices}, volume 118 of {\em
  Cambridge Studies in Advanced Mathematics}.
\newblock Cambridge University Press, Cambridge, 2010.

\bibitem[BB96]{BeenakerBrouwer96}
P.~W. Brouwer and C.~W.~J. Beenaker.
\newblock Diagrammatic method of integration over the unitary group, with
  applications to quantum transport in mesoscopic systems.
\newblock {\em J. Math. Phys.}, 37:4904--4934, 1996.

\bibitem[BDW77]{DeWittHooft}
G.~'t~Hooft B.~De~Wit.
\newblock Nonconvergence of the $1/n$ expansion for $\group{SU}(n)$ gauge
  fields on a lattice.
\newblock {\em Phys. Lett.}, 69B:61--64, 1977.

\bibitem[BG15]{BufetovGorin2015}
Alexey Bufetov and Vadim Gorin.
\newblock Representations of classical {L}ie groups and quantized free
  convolution.
\newblock {\em Geom. Funct. Anal.}, 25(25):763--814, 2015.

\bibitem[BI13]{BurgisserIkenmeyer2013}
Peter B\"urgisser and Christian Ikenmeyer.
\newblock Deciding positivity of littlewood-richardson coefficients.
\newblock {\em Siam. J. Discrete Math.}, 4(27):1639--1681, 2013.

\bibitem[Bia95]{Biane95}
Philippe Biane.
\newblock Representations of unitary groups and free convolution.
\newblock {\em Publ. Res. Inst. Math. Sci.}, 31(1):63--79, 1995.

\bibitem[Bia98]{Biane98}
Philippe Biane.
\newblock Representations of symmetric groups and free probability.
\newblock {\em Adv. Math.}, 138(1):126--181, 1998.

\bibitem[Bia02]{Biane2002}
P.~Biane.
\newblock Parking functions of types {A} and {B}.
\newblock {\em Electron. J. Combin.}, 9(1):Note 7, 5 pp. (electronic), 2002.

\bibitem[CMN17]{CollinsMatsumotoNovak2017}
Beno\^it Collins, Sho Matsumoto, and Jonathan Novak.
\newblock {\em An Invitation to the Weingarten Calculus}.
\newblock In preparation, 2017.

\bibitem[Col03]{Collins2003}
Beno{\^{\i}}t Collins.
\newblock Moments and cumulants of polynomial random variables on unitary
  groups, the {I}tzykson-{Z}uber integral, and free probability.
\newblock {\em Int. Math. Res. Not.}, (17):953--982, 2003.

\bibitem[C{\'S}06]{CollinsSniady2004}
Beno{\^{\i}}t Collins and Piotr {\'S}niady.
\newblock Integration with respect to the {H}aar measure on unitary, orthogonal
  and symplectic group.
\newblock {\em Comm. Math. Phys.}, 264(3):773--795, 2006.

\bibitem[C{\'S}09a]{CollinsSniady2009}
Beno{\^{\i}}t Collins and Piotr {\'S}niady.
\newblock Asymptotic fluctuations of representations of the unitary groups.
\newblock Preprint arXiv:0911.5546, 2009.

\bibitem[C{\'S}09b]{CollinsSniady2006}
Beno{\^{\i}}t Collins and Piotr {\'S}niady.
\newblock Representations of {L}ie groups and random matrices.
\newblock {\em Trans. Amer. Math. Soc.}, 361(6):3269--3287, 2009.

\bibitem[GGPN16a]{GGN5}
I.~P. Goulden, M.~Guay-Paquet, and J.~Novak.
\newblock On the convergence of monotone {H}urwitz generating functions.
\newblock {\em Ann. Comb.}, (To appear), 2016.

\bibitem[GGPN16b]{GGN4}
I.~P. Goulden, M.~Guay-Paquet, and J.~Novak.
\newblock Toda equations and piecewise polynomiality for mixed double {H}urwitz
  numbers.
\newblock {\em SIGMA Symmetry Integrability Geom. Mehtods Appl.}, (12):1--10,
  2016.

\bibitem[GT93]{GrossTaylor}
David~J. Gross and Washington Taylor, IV.
\newblock Two-dimensional {QCD} is a string theory.
\newblock {\em Nuclear Phys. B}, 400(1-3):181--208, 1993.

\bibitem[Kir04]{KirillovOrbitMethod}
A.~A. Kirillov.
\newblock {\em Lectures on the Orbit Method}, volume~64 of {\em Graduate
  Studies in Mathematics}.
\newblock American Mathematical Society, Providence, 2004.

\bibitem[KT01]{KnutsonTao}
A.~Knutson and T.~Tao.
\newblock Honeycombs and sums of hermitian matrices.
\newblock {\em Notices of the AMS}, 48:175--186, 2001.

\bibitem[Kup02]{Kuperberg2002}
Greg Kuperberg.
\newblock Random words, quantum statistics, central limits, random matrices.
\newblock {\em Methods Appl. Anal.}, 9(1):99--118, 2002.

\bibitem[MN13]{MatsumotoNovak2013}
Sho Matsumoto and Jonathan Novak.
\newblock Jucys-{M}urphy elements and unitary matrix integrals.
\newblock {\em Int. Math. Res. Not. IMRN}, (2):362--397, 2013.

\bibitem[Mor11]{Morozov1}
A.~Morozov.
\newblock Unitary integrals and related matrix models.
\newblock In {\em The {O}xford handbook of random matrix theory}, pages
  353--375. Oxford Univ. Press, Oxford, 2011.

\bibitem[MS16]{MingoSpeicher_book}
J.~A. Mingo and R.~Speicher.
\newblock {\em Free Probability and Random Matrices}.
\newblock Fields Institute Publications, 2016.

\bibitem[M{\'S}S07]{HigherOrderFreeness2}
James~A. Mingo, Piotr {\'S}niady, and Roland Speicher.
\newblock Second order freeness and fluctuations of random matrices. {II}.
  {U}nitary random matrices.
\newblock {\em Adv. Math.}, 209(1):212--240, 2007.

\bibitem[Nar06]{Narayanan}
Hariharan Narayanan.
\newblock On the complexity of computing {K}ostka numbers and
  {L}ittlewood-{R}ichardson coefficients.
\newblock {\em J. Algebr. Comb.}, 24:347--354, 2006.

\bibitem[Nov10]{Novak2010}
Jonathan~I. Novak.
\newblock Jucys-{M}urphy elements and the unitary {W}eingarten function.
\newblock In {\em Noncommutative harmonic analysis with applications to
  probability {II}}, volume~89 of {\em Banach Center Publ.}, pages 231--235.
  Polish Acad. Sci. Inst. Math., Warsaw, 2010.

\bibitem[Nov14]{Novak}
Jonathan Novak.
\newblock Three lectures on free probability.
\newblock In {\em Random matrix theory, interacting particle systems, and
  integrable systems}, volume~65 of {\em Math. Sci. Res. Inst. Publ.}, pages
  309--383. Cambridge Univ. Press, New York, 2014.
\newblock With illustrations by Michael LaCroix.

\bibitem[NS06]{NicaSpeicher_book}
Alexandru Nica and Roland Speicher.
\newblock {\em Lectures on the combinatorics of free probability}, volume 335
  of {\em London Mathematical Society Lecture Note Series}.
\newblock Cambridge University Press, Cambridge, 2006.

\bibitem[NS11]{NovakSniady}
J.~Novak and P.~\'Sniady.
\newblock What is... a free cumulant?
\newblock {\em Not. Amer. Math. Soc.}, 58:300--301, 2011.

\bibitem[PP67]{PerelomovPopov1967}
A.~M. Perelomov and V.~S. Popov.
\newblock Casimir operators for the classical groups.
\newblock {\em Dokl. Akad. Nauk SSSR}, 174:287--290, 1967.

\bibitem[Sam80]{Samuel}
Stuart Samuel.
\newblock {${\rm U}(N)$} integrals, {$1/N$}, and the {D}e\thinspace
  {W}it-'t\thinspace {H}ooft anomalies.
\newblock {\em J. Math. Phys.}, 21(12):2695--2703, 1980.

\bibitem[Shl05]{Shlyakhtenko}
Dimitri Shlyakhtenko.
\newblock Notes on free probability.
\newblock arXiv preprint math/0504063, 2005.

\bibitem[Sta97]{Stanley}
Richard~P. Stanley.
\newblock Parking functions and noncrossing partitions.
\newblock {\em Electron J. Combinat.}, 45:\#R20, 1997.

\bibitem[Tao12]{TaoRMT}
Terence Tao.
\newblock {\em Topics in random matrix theory}, volume 132 of {\em Graduate
  Studies in Mathematics}.
\newblock American Mathematical Society, Providence, RI, 2012.

\bibitem[VDN92]{DNV}
D.~V. Voiculescu, K.~J. Dykema, and A.~Nica.
\newblock {\em Free random variables}, volume~1 of {\em CRM Monograph Series}.
\newblock American Mathematical Society, Providence, RI, 1992.
\newblock A noncommutative probability approach to free products with
  applications to random matrices, operator algebras and harmonic analysis on
  free groups.

\bibitem[Voi91]{Voiculescu1991}
Dan Voiculescu.
\newblock Limit laws for random matrices and free products.
\newblock {\em Invent. Math.}, 104(1):201--220, 1991.

\bibitem[Wey97]{Weyl}
Hermann Weyl.
\newblock {\em The classical groups}.
\newblock Princeton Landmarks in Mathematics. Princeton University Press,
  Princeton, NJ, 1997.
\newblock Their invariants and representations, Fifteenth printing, Princeton
  Paperbacks.

\bibitem[Xu97]{Xu97}
F.~Xu.
\newblock A random matrix model from two-dimensional {Y}ang-{M}ills theory.
\newblock {\em Comm. Math. Phys.}, 190(2):287--307, 1997.

\bibitem[{\v{Z}}el73]{Zelobenko73}
D.~P. {\v{Z}}elobenko.
\newblock {\em Compact {L}ie groups and their representations}.
\newblock American Mathematical Society, Providence, R.I., 1973.
\newblock Translated from the Russian by Israel Program for Scientific
  Translations, Translations of Mathematical Monographs, Vol. 40.

\end{thebibliography}

\appendix

\section{Rudiments of Free Probability}
\label{app:FreeProb}
Here we briefly outline the basic notions from Free Probability Theory 
which are used in the body of the paper.  This is far from a complete
treatment; further references are the 
texts \cite{DNV,MingoSpeicher_book,NicaSpeicher_book,TaoRMT}, 
the lecture notes \cite{Novak,Shlyakhtenko},
and the brief pr\'ecis \cite{NovakSniady}.

A \emph{noncommutative probability space} is a pair $(\A,\tau)$ consisting of
a unital, associative $\C$-algebra $\A$ together with a unital linear functional 
$\tau \colon \A \to \C$ which is assumed to be a trace: $\tau(AB)=\tau(BA)$ for all
$A,B \in \A$.  The elements of $\A$ are to be thought of as complex-valued
random variables on some underlying Kolmogorov triple $(\Omega,\mathcal{F},\P)$,
with $\E$ playing the role of expectation with respect to the probability measure 
$\P$.  Of course, since $\A$ is allowed to be noncommutative, such a triple may 
not exist.  Elements of $\A$ are \emph{quantum random variables}.

Given a random variable $A \in \A$, the \emph{distribution} of $\A$ is the moment
sequence of $A$:
	\begin{equation*}
		\tau(A^p), \quad p \in\N^*.
	\end{equation*}
Given a pair $A,B$ of quantum random variables in $\A$, their \emph{joint
distribution} is the data set
	\begin{equation*}
		\tau(A^{p(1)}B^{q(1)} \cdots A^{p(d)}B^{q(d)}), \quad
		d \in \N^*,\ p,q \colon [d] \to \N
	\end{equation*}	
obtained by evaluating $\tau$ on all words in $A$ and $B$.  These expectations
are called the \emph{mixed moments} of $A$ and $B$.  

In the context of noncommutative probability, it is reasonable to consider $A,B$ to
be \emph{independent} if there is a universal rule for computing their joint distribution
from knowledge of their individual distributions (``universal'' means that this 
rule does not depend on the individual distributions of $A$ and $B$).  
One such rule comes to us from 
classical probability: $A$ and $B$ are said to be \emph{classically independent}
if they commute, and $\tau(A^pB^q)=\tau(A^p)\tau(B^q)$ for any $p,q \in \N^*$.
In this case, one has 
	\begin{equation*}
		\tau(A^{p(1)}B^{q(1)} \cdots A^{p(d)}B^{q(d)}) = \tau(A^{|p|})\tau(B^{|q|}),
	\end{equation*}
for any mixed moment.

A second universal independence rule for quantum random variables, which is 
truly noncommutative in nature, was discovered by Voiculescu \cite{Voiculescu1991}. 
It is modelled on free products and is substantially more complicated than 
classical independence, which is modelled on tensor products.  A pair of random variables
$A,B \in \A$ are \emph{freely independent} if 
	\begin{equation*}
		\tau\big(f_1(A)g_1(B) \cdots f_d(A)g_d(B)\big)=0
	\end{equation*}
whenever $f_1,g_1,\dots,f_d,g_d$ are univariate polynomials such that
	\begin{equation*}
		\tau\big(f_1(A)\big)=\tau\big(g_1(B)\big)= \dots =\tau\big(f_d(A)\big)=\tau\big(g_d(B)\big)=0.
	\end{equation*}
It is a fact that classical independence and free independence are the \emph{only}
universal independence rules for quantum random variables, see \cite{NicaSpeicher_book}.

It is a non-obvious fact that one may express a mixed moment of free random variables
$A,B$ in terms of pure moments, and indeed the explicit rule for doing so is quite complicated.
This rule may be formulated as follows.  For each positive integer 
$d \in \N^*$, and each permutation $\pi \in \Sy{d}$, define a $d$-linear functional 
	\begin{equation*}
		\tau_\pi \colon \underbrace{\A \times \dots \times \A}_{d \text{ factors}}
		\to \C
	\end{equation*}
using the cycle structure of $\pi$ in the natural way.  For example, if $d=6$ and
$\pi=(1\ 4\ 2)(3\ 6)(5)$, then 
	\begin{equation*}
		\tau_\pi(X_1,X_2,X_3,X_4,X_5,X_6) =
		\tau(X_1X_4X_2)\ \tau(X_3X_6)\ \tau(X_5).
	\end{equation*}
Observe that $\tau_\pi$ is well-defined because $\tau$ is a trace.  The mixed moments
of a pair $A,B$ of free random variables decompose into pure moments according to the rule
	\begin{multline*}
		\tau(A^{p(1)}B^{q(1)} \cdots A^{p(d)}B^{q(d)})
		= \\ \sum_{\substack{({\pi_1},{\pi_2}) \in \Sy{d}^2 \\ 
		\genus({\pi_1},{\pi_1}^{-1}{\pi_2},{\pi_2}^{-1}\gamma)=0}}
		(-1)^{|{\pi_1}^{-1}{\pi_2}|}\ \vec{W}_0({\pi_1},{\pi_2})\
		\mathbf{P}_{\pi_1}(A)\ \mathbf{P}_{{\pi_2}^{-1}\gamma}(B),
	\end{multline*}
where
	\begin{equation*}
		\mathbf{P}_{\pi_1}(A) = \tau_{\pi_1}(A^{p(1)},\dots,A^{p(d)}) 
		\quad\text{ and }\quad
		\mathbf{P}_{{\pi_2}^{-1}\gamma}(B)=\tau_{{\pi_2}^{-1}\gamma}(B^{q(1)},\dots,B^{q(d)})
	\end{equation*}
and $(-1)^{|{\pi_1}^{-1}{\pi_2}|}\vec{W}_0({\pi_1},{\pi_2})$ is the leading order of the Weingarten 
function, i.e.~$\vec{W}_0({\pi_1},{\pi_2})$ is
the number of monotone geodesic paths from ${\pi_1}$ to ${\pi_2}$ in $\Sy{d}$, as 
in \eqref{eqn:WeingartenReduced}.  

\section{Corrected proof of \cref{prop:Antiexceedance}}
\label{sec:erratum}

Regrettably, the proof of \cref{prop:Antiexceedance} presented by Biane \cite[Proposition 8.5, part (3)]{Biane98}
is not completely correct. That proof was based on a commutation relation \cite[top formula on page 166]{Biane98}
fulfilled by the entries of the powers of the matrix $Z_N$;
a commutation relation which with our notations would take the form
\begin{equation}
		\label{eqn:HigherCommutation}
		[(Z_N^m)_{ij},(Z_N^n)_{kl}] = 
		\hbar_N \left[ \delta_{jk}\ (Z_N^{m+n-1})_{il} - \delta_{li}\ (Z_N^{m+n-1})_{kj}\right].
\end{equation}
This commutation relation \emph{does not} hold true in general --- in fact, it fails unless
one of the exponents $m,n$ is equal to $1$.
In this Section will provide a correct proof which is based on the ideas of 
Biane \cite[Section 8]{Biane98},
but does not make use of the  false statement \eqref{eqn:HigherCommutation}.

\subsection{Biasimirs and nice Coxeter conjugations}

\begin{definition}
Let $\pi\in\Sy{d}$ be a permutation. 
We say that the passage from $\pi$ to to $\tau \pi \tau^{-1}$ is a 
\emph{nice Coxeter conjugation} if 
$\tau=(j\ \ j+1)$ with $1\leq j<d$ is a Coxeter transposition such that $\pi(j)\neq j+1$.
\end{definition}

\begin{lemma}
\label{lem:bad-things-that-can-happen-while-conjugating}
If $\tau \pi \tau^{-1}$ is a nice Coxeter conjugation of $\pi\in\Sy{d}$ by $\tau=(j\ \ j+1)$ 
then the elements $C_{\pi}^{(\mathbf{1})}$ and $C_{\tau \pi \tau^{-1}}^{(\mathbf{1})}$
are related by the following relations.
\begin{itemize}
   \item If $j$ or $j+1$ is a fixpoint of $\pi$ then
\[ C^{(\mathbf{1})}_{\pi} - C^{(\mathbf{1})}_{\tau \pi\tau^{-1}} =  0.\]

   \item If neither $j$ nor $j+1$ is a fixpoint of $\pi$ and
$\pi(j+1)\neq j$ then there exist permutations $\pi',\pi''\in\Sy{d-1}$
with the property that
\[ C^{(\mathbf{1})}_{\pi} - C^{(\mathbf{1})}_{\tau \pi\tau^{-1}} =  
   \hbar_N\  C^{(\mathbf{1})}_{\pi'}- \hbar_N\ C^{(\mathbf{1})}_{\pi''}.\]
Furthermore, $\aex \pi'\leq \aex \pi$ and $\aex \pi''\leq \aex \pi$

   \item If $\pi(j+1)= j$ there exist permutations $\pi',\pi'''\in\Sy{d-1}$
with the property that
\[ C^{(\mathbf{1})}_{\pi} - C^{(\mathbf{1})}_{\tau \pi\tau^{-1}} =  
   \hbar_N\  C^{(\mathbf{1})}_{\pi'}- \hbar_N N\ C^{(\mathbf{1})}_{\pi'''}.\]
Furthermore, $\aex \pi'\leq \aex \pi$ and $\aex \pi'''+1\leq \aex \pi$.
\end{itemize}

\end{lemma}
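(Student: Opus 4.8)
Write $\widehat{\pi}:=\tau\pi\tau^{-1}$, where $\tau=(j\ \ j+1)$. The plan is to exhibit $C_{\pi}^{(\mathbf{1})}-C_{\widehat{\pi}}^{(\mathbf{1})}$ as a \emph{single} inserted commutator and then to resolve that commutator using only the defining relation $[(Z_N)_{ab},(Z_N)_{cd}]=\hbar_N\bigl(\delta_{bc}(Z_N)_{ad}-\delta_{da}(Z_N)_{cb}\bigr)$ of $\U(\gl_N)$, keeping careful track of the resulting combinatorics. First I would perform the change of summation variable $i\mapsto i\circ\tau$ in the definition \eqref{eqn:Biasimir} of $C_{\pi}^{(\mathbf{1})}$: since $\tau$ is a bijection of $[d]$ the sum is unchanged, and using $\tau\pi(k)=\widehat{\pi}(\tau(k))$ one finds that the only effect is to interchange the two adjacent factors occupying positions $j$ and $j+1$, with $\widehat{\pi}$ replacing $\pi$ throughout. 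Hence
\[
C_{\pi}^{(\mathbf{1})}-C_{\widehat{\pi}}^{(\mathbf{1})}=\sum_{i\colon[d]\to[N]}L_i\,\bigl[(Z_N)_{i(j+1),\,i\widehat{\pi}(j+1)},\;(Z_N)_{i(j),\,i\widehat{\pi}(j)}\bigr]\,R_i,
\]
where $L_i,R_i$ are the unchanged partial products over positions $1,\dots,j-1$ and $j+2,\dots,d$. Since $\widehat{\pi}(j)=\tau\pi(j+1)$ and $\widehat{\pi}(j+1)=\tau\pi(j)$, applying the commutation relation turns this into $\hbar_N(\Sigma_1-\Sigma_2)$, where $\Sigma_1$ carries the Kronecker factor $\delta_{i\tau\pi(j),\,i(j)}$ and $\Sigma_2$ carries $\delta_{i\tau\pi(j+1),\,i(j+1)}$; in both, the two factors have collapsed into one, so exactly one position is gone and $d-1$ factors remain.

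Next I would recognise each of $\Sigma_1,\Sigma_2$ as (a scalar multiple of) a Biasimir on $\Sy{d-1}$. Because a nice Coxeter conjugation assumes $\pi(j)\neq j+1$, we always have $\tau\pi(j)\neq j$, so the Kronecker factor in $\Sigma_1$ genuinely identifies two distinct summation positions; substituting $i(j)\mapsto i\bigl(\tau\pi(j)\bigr)$ eliminates the index $i(j)$, and after an order-preserving relabelling of $[d]\setminus\{j\}$ by $[d-1]$ one reads off $\Sigma_1=C_{\pi'}^{(\mathbf{1})}$, where $\pi'\in\Sy{d-1}$ is obtained from $\pi$ by a local modification of its functional graph near $j$ and $j+1$ (rerouting the arcs through the identified vertices and deleting $j$). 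For $\Sigma_2$ there are two possibilities. If $\pi(j+1)\neq j$ then $\tau\pi(j+1)\neq j+1$, the Kronecker factor again identifies distinct positions, and the same surgery yields $\Sigma_2=C_{\pi''}^{(\mathbf{1})}$ for some $\pi''\in\Sy{d-1}$. If instead $\pi(j+1)=j$, then $\tau\pi(j+1)=j+1$, so the Kronecker factor is identically $1$; one checks, using $\widehat{\pi}(j)=j+1$ in this case, that the index $i(j+1)$ then occurs nowhere in the surviving product, so summing it out contributes a free factor $N$ and $\Sigma_2=N\,C_{\pi'''}^{(\mathbf{1})}$ for some $\pi'''\in\Sy{d-1}$. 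These three outcomes are exactly the three displayed identities.

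It remains to dispose of the fixpoint case and to establish the antiexceedance bounds. If $j$ (say) is a fixpoint of $\pi$, I would argue directly rather than through the commutator: the position-$j$ factor of $C_{\pi}^{(\mathbf{1})}$ is $(Z_N)_{i(j),i(j)}=\hbar_N e_{i(j),i(j)}$, the index $i(j)$ appears nowhere else in the product, and $\sum_{m}\hbar_N e_{m,m}=C_1$ is central in $\U(\gl_N)$, so $C_{\pi}^{(\mathbf{1})}=C_1\,C_{\pi\setminus j}^{(\mathbf{1})}$, where $\pi\setminus j$ denotes $\pi$ restricted to $[d]\setminus\{j\}$ and relabelled by $[d-1]$; likewise $C_{\widehat{\pi}}^{(\mathbf{1})}=C_1\,C_{\widehat{\pi}\setminus(j+1)}^{(\mathbf{1})}$, since $\widehat{\pi}$ fixes $j+1$. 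A direct check, using $\pi(j)\neq j+1$, shows $\pi\setminus j$ and $\widehat{\pi}\setminus(j+1)$ are the same permutation of $[d-1]$, so the difference vanishes; when both $j$ and $j+1$ are fixed points one simply has $\widehat{\pi}=\pi$.

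The genuinely delicate point, which I expect to be the main obstacle, is the verification that $\aex(\pi')\leq\aex(\pi)$ and $\aex(\pi'')\leq\aex(\pi)$ in the first case, and $\aex(\pi''')+1\leq\aex(\pi)$ when $\pi(j+1)=j$. This amounts to understanding how the surgery together with the order-preserving relabelling of $[d]\setminus\{j\}$ (respectively $[d]\setminus\{j+1\}$) acts on the set of antiexceedances, and it resolves into a finite case analysis according to the relative positions of $j$, $j+1$, $\pi(j)$, $\pi^{-1}(j)$ and $\pi^{-1}(j+1)$. This bookkeeping is precisely what Biane's original argument bypassed by invoking the false higher commutation relation \eqref{eqn:HigherCommutation}; because here all exponents equal $1$, only the basic relation is needed, at the cost of carrying out this case analysis by hand.
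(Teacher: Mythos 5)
Your reduction of $C_{\pi}^{(\mathbf{1})}-C_{\tau\pi\tau^{-1}}^{(\mathbf{1})}$ to a single inserted commutator is correct, and your resolution of that commutator into the three cases (delta identifying two distinct surviving positions, delta identifying two distinct surviving positions for the second term, or delta trivial when $\pi(j+1)=j$) matches the structure of the paper's argument, which itself follows Biane's Lemma 8.3. Your treatment of the fixpoint case (extracting a central factor $C_1$ and checking that the restricted permutations coincide after order-preserving relabelling) is also sound. The choice to describe $\pi'$ as living on $[d]\setminus\{j\}$ rather than $[d]\setminus\{j+1\}$ as in \eqref{eq:pi-prim} is immaterial: the two conventions are related by the unique order-preserving bijection between those sets.

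The gap is that you never actually prove the antiexceedance inequalities $\aex(\pi')\leq\aex(\pi)$, $\aex(\pi'')\leq\aex(\pi)$, and $\aex(\pi''')+1\leq\aex(\pi)$. You correctly identify this as ``the genuinely delicate point'' and ``the main obstacle,'' and then you stop, merely asserting that it ``resolves into a finite case analysis according to the relative positions of $j$, $j+1$, $\pi(j)$, $\pi^{-1}(j)$ and $\pi^{-1}(j+1)$.'' But these inequalities are the entire point of the lemma --- they are what feed into \cref{prop:Antiexceedance} and ultimately into the degree control on the quantum component --- and the paper devotes the bulk of its proof to establishing them by writing down the explicit formulas \eqref{eq:pi-prim}, \eqref{eq:pi-prim-prim}, \eqref{eq:pi-prim-prim-prim} and carefully comparing antiexceedances element by element. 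The comparison is not entirely routine: for instance, to show that an antiexceedance of $\pi'$ at $\pi^{-1}(j+1)$ forces an antiexceedance of $\pi$, one argues by contradiction using the chain $j<\pi(j)=\pi'(a)\leq a<\pi(a)=j+1$, which leverages the fact that there is no integer strictly between $j$ and $j+1$. Asserting that ``inspection'' will do is precisely the step the paper criticises in Biane's original argument, so leaving it unproved reproduces the defect the lemma was written to repair. Your closing remark also slightly misattributes the role of the false relation \eqref{eqn:HigherCommutation}: it was invoked by Biane in the proof of his Proposition 8.5, not to skip the antiexceedance bookkeeping in Lemma 8.3; the device that lets one avoid \eqref{eqn:HigherCommutation} is the flattening reduction $C_{\pi}^{(r)}=C_{\pi'}^{(\mathbf{1}_{|r|})}$ of \eqref{eq:yes-you-can-make-it-flat}, which already happened upstream and is why the present lemma only needs exponent $1$.
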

\begin{proof}
This result corresponds to a special case of a result of Biane \cite[Lemma 8.3]{Biane98}.
In this specific case his proof is correct up to the point when he says that the remaining part
\emph{``follows by inspection''}. 
However, by performing in detail this inspection we obtained formulas for the 
permutations $\pi'$ and $\pi''$ which differ from the ones of Biane;
we present our findings in the following.

\medskip

\emph{Permutation $\pi'$.} 
The simplest way to describe the permutation $\pi'$ is to view it as a permutation of the set
$[d]\setminus \{ j+1\}$ defined by
\begin{equation}
\label{eq:pi-prim}
 \pi'(k) = \begin{cases} 
                \pi(j+1) & \text{if } k=j, \\
                \pi(j)   & \text{if } \pi(k)=j+1, \\
                \pi(k) & \text{otherwise}. 
             \end{cases}   
\end{equation}
The corresponding Biasimir is defined as a natural modification of \eqref{eqn:Biasimir} given by
\begin{equation}
\label{eq:biasimir-exotic}
		C_{\pi'}^{(\mathbf{1})} := \sum_{i\colon[d]\setminus\{j+1\} \rightarrow [N]}
		(Z_N)_{i(1)i\pi'(1)} \cdots \cancel{(Z_N)_{i(j+1)i\pi'(j+1)}} \dots (Z_N)_{i(d)i\pi'(d)}.
\end{equation}
By relabelling in the order-preserving way the elements of the set $[d]\setminus \{ j+1\}$ 
to the elements of $[d-1]$, the permutation $\pi'$ can be viewed as the usual permutation of $[d-1]$;
in this way \eqref{eq:biasimir-exotic} can be written as a more conventional Biasimir of the form
\eqref{eqn:Biasimir}.

We shall compare now the sets of the antiexceedances of $\pi$ and $\pi'$ (which will be viewed as
in \eqref{eq:pi-prim}).
\begin{itemize}
   \item Each element of $[d] \setminus \{j,j+1, \pi^{-1}(j+1) \}$ is an antiexceedance of $\pi$
if and only if it is an antiexceedance of $\pi'$.

   \item If $j$ is an antiexceedance of $\pi'$ then
\[ j \geq \pi'(j) = \pi(j+1) \]
and, a fortiori, $j+1$ is an antiexceedance of $\pi$.

   \item Suppose that $a:=\pi^{-1}(j+1)$ is an antiexceedance of $\pi'$; we claim that
at least one of the elements $j$ and $a$ is an antiexceedance of $\pi$.
By contradiction, if this would not be the case we would have the following inequalities:
\[ j < \pi(j) 
= \pi'(a)
\leq a
< \pi(a)
=j+1
\]
which would imply that there is some integer number between $j$ and $j+1$ which is clearly not the case.
\end{itemize}
In this way we proved that $\aex \pi'\leq \aex \pi$.

\medskip

\emph{Permutation $\pi'$.} 
Analogously, in the case when $\pi(j+1)\neq j$,
the simplest way to describe the permutation $\pi''$ is to view it as a permutation of the set
$[d]\setminus \{ j\}$ defined by
\begin{equation}
\label{eq:pi-prim-prim}
 \pi''(k) = \begin{cases} 
                \pi(j)     & \text{if } k=j+1, \\
                \pi(j+1)   & \text{if } \pi(k)=j, \\
                \pi(k)     & \text{otherwise}. 
             \end{cases}   
\end{equation}

We shall compare now the sets of the antiexceedances of $\pi$ and $\pi''$ (which will be viewed as
in \eqref{eq:pi-prim-prim}).
\begin{itemize}
   \item Each element of $[d] \setminus \{j,j+1, \pi^{-1}(j) \}$ is an antiexceedance of $\pi$
if and only if it is an antiexceedance of $\pi''$.

   \item If $j+1$ is an antiexceedance of $\pi''$ then
\[ j+1 \geq \pi''(j+1) = \pi(j).\]
Since by assumption $\pi(j)\neq j+1$ it follows that $\pi(j)\leq j$ and
$j$ is an antiexceedance of $\pi$.

   \item Suppose that $a:=\pi^{-1}(j)$ is an antiexceedance of $\pi''$; we claim that
at least one of the elements $j+1$ and $a$ is an antiexceedance of $\pi$.
By contradiction, if this would not be the case we would have the following inequalities:
\[ j+1 < \pi(j+1) 
= \pi''(a)
\leq a
< \pi(a)
=j
\]
which leads to contradiction.
\end{itemize}
In this way we proved that $\aex \pi''\leq \aex \pi$.

\medskip

\emph{Permutation $\pi'''$.} 
Consider now the case $\pi(j+1)=j$,
the simplest way to describe the permutation $\pi'''$ is to view it as a permutation of the set
$[d]\setminus \{ j\}$ defined by
\begin{equation}
\label{eq:pi-prim-prim-prim}
 \pi'''(k) = \begin{cases} 
                \pi(j)     & \text{if } k=j+1, \\
                \pi(k)     & \text{otherwise}. 
             \end{cases}   
\end{equation}

We shall compare now the sets of the antiexceedances of $\pi$ and $\pi'''$.
\begin{itemize}
   \item Each element of $[d] \setminus \{j,j+1\}$ is an antiexceedance of $\pi$
if and only if it is an antiexceedance of $\pi'''$.

   \item If $j+1$ is an antiexceedance of $\pi'''$ then
\[ j+1 \geq \pi'''(j+1) = \pi(j).\]
Since by assumption $\pi(j)\neq j+1$ it follows that $\pi(j)\leq j$ and
$j$ is an antiexceedance of $\pi$.

   \item Additionally $j+1$ is an antiexceedance of $\pi$.
\end{itemize}
In this way we proved that $\aex \pi''' + 1\leq \aex \pi$.

\end{proof}

\subsection{Converting permutations into a canonical form}

\begin{lemma}
\label{lem:aex-is-nice-at-each-step}
Any permutation $\pi$ 
can be transformed into some canonical permutation \eqref{eq:canonical}
by a sequence of nice Coxeter conjugations.
In other words,
there exists a sequence $\tau_1,\dots,\tau_\ell$ with the following two properties:
for  
\[ \pi_i := \tau_i \tau_{i-1} \cdots \tau_1 \pi \tau_1^{-1} \cdots \tau_i^{-1} \]
we have that $\pi_i$ is a nice Coxeter conjugation of $\pi_{i-1}$ for
$1\leq i \leq \ell$
and the final permutation $\pi_\ell$ is of the canonical form \eqref{eq:canonical}.
\end{lemma}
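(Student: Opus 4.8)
The plan is to exhibit an explicit algorithm which, scanning the one-line notation of $\pi$ from left to right, pushes each value $\pi(k)$ down to $k+1$ (leaving it alone if it is already $\le k+1$) by a cascade of conjugations by adjacent transpositions, and then to verify that every conjugation in the cascade is nice and that already-processed entries are never disturbed.

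I would first record the elementary characterisation that $\sigma\in\Sy{d}$ has the canonical form \eqref{eq:canonical} if and only if $\sigma(i)\le i+1$ for all $i\in[d]$. The implication ``$\Rightarrow$'' is immediate from the description of a canonical permutation. For ``$\Leftarrow$'', fix a cycle $C$ of $\sigma$; for any $j\in C$ one has $\sigma(\sigma^{-1}(j))=j\le\sigma^{-1}(j)+1$, so the $\sigma$-preimage inside $C$ of each element is at least that element minus one, and starting from $\ell=\min C$ this forces $C$ to be a contiguous interval $\{\ell,\ell+1,\dots,m\}$ on which $\sigma$ acts as the forward cycle $\ell\to\ell+1\to\dots\to m\to\ell$. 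Since the cycles partition $[d]$ into intervals, $\sigma$ is canonical. This reduces the lemma to producing, by nice Coxeter conjugations, a permutation with $\pi(i)\le i+1$ for all $i$.

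Next comes the algorithm. For $k=1,2,\dots,d-1$ in turn, let $v:=\pi(k)$ denote the current value after the previous steps. If $v\le k+1$ do nothing; if $v\ge k+2$, conjugate successively by $(v-1\ \ v),(v-2\ \ v-1),\dots,(k+1\ \ k+2)$. A short check shows that the conjugation by $(w-1\ \ w)$, with $w$ running from $v$ down to $k+2$, is performed on a permutation whose value at $k$ equals $w$ and, since $k\le w-2$, replaces that value by $w-1$; hence after the cascade $\pi(k)=k+1$. The engine of the proof is the inductive invariant: \emph{after stage $k$ one has $\pi(m)\le m+1$ for every $m\le k$}. Granting it after stage $k-1$, during stage $k$ every transposition $(w-1\ \ w)$ used satisfies $w\ge k+2$, so $w-1\ge k+1$; for $m\le k-1$ we have both $m<w-1$ and, by the invariant, $\pi(m)\le m+1\le k<w-1$, so conjugation by $(w-1\ \ w)$ fixes $\pi(m)$. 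Thus the stage-$k$ cascade leaves $\pi(1),\dots,\pi(k-1)$ untouched and ends with $\pi(k)\le k+1$, which is the invariant at stage $k$. After stage $d-1$ we get $\pi(m)\le m+1$ for all $m\le d-1$, and $\pi(d)\le d$ trivially, so the resulting permutation is canonical.

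It remains to confirm niceness: before conjugating by $\tau=(w-1\ \ w)$ in stage $k$ the current value of $\pi$ at $k$ is $w$ with $k\le w-2$, so $k\ne w-1$ and injectivity of $\pi$ forces $\pi(w-1)\ne w$; hence the passage to $\tau\pi\tau^{-1}$ is a nice Coxeter conjugation, and concatenating the transpositions over all stages yields the required sequence $\tau_1,\dots,\tau_\ell$. I do not anticipate a genuine obstacle here; the only delicate point is the bookkeeping in the inductive invariant, which is precisely what guarantees that the processed prefix $\pi(1),\dots,\pi(k-1)$ is frozen by the later cascades, so that the algorithm terminates at an honestly canonical permutation rather than merely migrating large values elsewhere.
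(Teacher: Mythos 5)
Your proof is correct, and it takes a genuinely different route from the paper's, even though both share the same underlying engine (a cascade of nice Coxeter conjugations by adjacent transpositions $(w-1\ \ w)$ that pushes a large value of $\pi(k)$ down to $k+1$). The paper first performs a global reduction: it uses the observation that conjugating by $(j\ \ j+1)$ is nice whenever $j$ and $j+1$ lie in different cycles, and this merely interchanges $j$ and $j+1$ in the set partition of cycle supports, so one can first sort the cycle supports into contiguous intervals; then it treats each cycle separately via a ``spoil'' statistic that tracks how much of the cycle has been straightened. You bypass both steps entirely with a single left-to-right pass over positions $1,\dots,d-1$, together with the invariant that after stage $k$ one has $\pi(m)\le m+1$ for all $m\le k$. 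The key point making this work --- which you isolate cleanly --- is that the transpositions used at stage $k$ are all of the form $(w-1\ \ w)$ with $w\ge k+2$, so they move neither the indices $m\le k-1$ nor their images $\pi(m)\le k$, and the already-processed prefix is frozen. Your alternative characterization (canonical $\Leftrightarrow$ $\sigma(i)\le i+1$ for all $i$) is the right lens: it replaces the paper's structural description by an inequality that is trivially checkable and directly compatible with a greedy one-pass algorithm. The argument for ``$\Leftarrow$'' is terse but sound: $\sigma(j)\le j+1$ on a cycle $C$ means the orbit of $\min C$ can only increase by one at each step, and since it must return, it sweeps out a contiguous interval traversed as a forward cycle. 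The niceness check is also correct: before conjugating by $(w-1\ \ w)$ in stage $k$ the current $\pi(k)=w$ with $k\le w-2$, so $k\ne w-1$ and injectivity gives $\pi(w-1)\ne w$. Overall, you buy a simpler, single-pass proof at the cost of explicitly formulating and maintaining the invariant; the paper buys conceptual modularity (separate cycles) at the cost of an extra reduction step.
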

\begin{proof}
Biane \cite[proof of Proposition 8.5 (3)]{Biane98} gives an algorithmic
construction of this sequence of transpositions which we reproduce below in 
a slightly redacted way.

\smallskip

Let $\sigma\in\Sy{d}$ be a permutation and $\tau=(j\ j+1)$ be a Coxeter transposition
with the property that $j$ and $j+1$ do not belong to the same cycle of 
$\sigma$. 
Clearly, $\sigma':= \tau \sigma \tau^{-1}$ is a nice Coxeter conjugation of $\sigma$.
Furthermore,
the set partition of the set $[d]$ which is encodes the cycle decomposition of $\sigma'$
is obtained from the analogous set partition for $\sigma$ by interchanging the roles of
the elements $j$ and $j+1$. It follows that the permutation $\pi$ can be transformed 
by a sequence of nice Coxeter transpositions 
to some permutation $\tilde{\pi}$ with a cycle decomposition given by an 
\emph{interval set partition} of the form
\[ \big\{ \{1,2,\dots,p_1\}, \{p_1+1,p_1+2,\dots,p_2\}, \dots, 
	\{ p_{l-1}+1,   p_{l-1}+2,  \dots, p_l\} \big\}. 
\]
The permutation $\tilde{\pi}$ can be seen as a product of disjoint cycles
thus it is enough to prove Lemma for each of the cycles separately.
Alternatively: it is enough to prove Lemma in the special case when $\pi$ consists of a single cycle.

\smallskip

Assume now that $\pi\in\Sy{d}$ consists of a single cycle. We can assume that $d\geq 2$. 
We denote by $\operatorname{spoil}(\pi)$ the smallest positive integer $i$ for which $\pi(i)\neq i+1$. 
If $\operatorname{spoil}(\pi)=d$ 
then $\pi=(1\ 2\ \dots d)$ is a canonical permutation and there is nothing to prove.
If $m:=\operatorname{spoil}(\pi)<d$ then $\pi(m)-1\geq m+1$ and
the permutation $\pi$ can be transformed by a sequence of Coxeter conjugations to the permutation
\begin{multline}
\label{eq:conjugations-trololo}
\tilde{\pi}:=(m+1\ m+2) \cdots \big(\pi(m)-1\ \ \ \pi(m)\big) \cdot  
 \pi \cdot \\ 
\cdot \big(\pi(m)-1\ \ \ \pi(m)\big)^{-1} 
\cdots  (m+1\ m+2)^{-1}.     
\end{multline}

The first of these conjugations, i.e.
\[\big(\pi(m)-1\ \ \ \pi(m)\big) \cdot  
 \pi \cdot \big(\pi(m)-1\ \ \ \pi(m)\big)^{-1} \]
is a nice Coxeter conjugation. Indeed, if this was not the case then
we would have $\pi\big( \pi(m)-1 \big) = \pi(m)$ and therefore
$\pi(m)-1=\pi(m)$ would lead to contradiction.  
In an analogous way one can show that each of the conjugations in \eqref{eq:conjugations-trololo}
is a nice Coxeter conjugation.

The permutation $\tilde{\pi}$ has the property that 
$\operatorname{spoil}(\tilde{\pi})>\operatorname{spoil}(\pi)$. 
We iterate this procedure on the newly obtained permutation $\tilde{\pi}$; 
it will terminate in a finite time because the statistics
$\operatorname{spoil}$ increases in each step. 

When the procedure terminates, $\pi$ is the canonical cycle.
\end{proof}

\subsection{Proof of \cref{prop:Antiexceedance}}

\begin{proof}[Proof of \cref{prop:Antiexceedance}]

\

\smallskip

\emph{The classical component $\mathbf{P}_\pi^{(r)}$.}
This part of the claim is straightforward bacause the exact form of 
$\mathbf{P}_\pi^{(r)}$ is explicitly known, see the text immediately
after the proof of \cref{prop:BianePolynomials}.

\bigskip

\emph{The quantum component $\mathbf{Q}_\pi^{(r)}$.}
We start by observing that it is enough to consider the special case when
$r=\mathbf{1}$ is a function which identically equal to $1$.
Indeed, in the proof of \cref{prop:BianePolynomials} we have constructed a permutation
$\pi'\in\Sy{|r|}$ with the property that $C_\pi^{(r)}=C_{\pi'}^{(\mathbf{1}_{|r|})}$,
cf.~\eqref{eq:yes-you-can-make-it-flat};
one can easily check that also
$\cyc \pi=\cyc \pi'$ and $\aex \pi=\aex \pi'$.

\medskip

We use induction over $d$ and assume that the result holds true for all permutations
$\pi\in\Sy{d}$.

\smallskip

By \cref{lem:aex-is-nice-at-each-step} the permutation $\pi$ 
can be transformed into some canonical permutation $\hat{\pi}$ of the form \eqref{eq:canonical}
by a sequence of nice Coxeter conjugations.
\cref{lem:bad-things-that-can-happen-while-conjugating} 
applied to each of the conjugations separately
shows that
the difference of the corresponding Biasimirs
\[ C_{\pi}^{(\mathbf{1})} - C_{\hat{\pi}}^{(\mathbf{1})} \]
is a sum of two terms:
\begin{itemize}
   \item a linear combination (with coefficients in $\Z[\hbar_N]$) of the expressions of the form 
\[ C_{\pi'}^{(\mathbf{1})} \]
over some permutations $\pi'\in\Sy{d-1}$ such that $\aex \pi'\leq \aex \pi$, and 

   \item a linear combination (with coefficients in $\Z[\hbar_N]$) of the expressions of the form 
\[ N \ C_{\pi'''}^{(\mathbf{1})} \]
over some permutations $\pi'''\in\Sy{d-1}$ such that $\aex \pi'''+1 \leq \aex \pi$. 
\end{itemize}

We apply the inductive assertion to the above expressions of the form $C_{\sigma}^{(\mathbf{1})}$
for $\sigma\in\Sy{d-1}$ which concludes the proof.
\end{proof}

\section{Proof of \cref{lem:Defect,lem:SecondDefect}}
\label{sec:proof-of-lem:Defect}

The work of Biane \cite[page 173]{Biane98} has some typos 
which impact the correctness of his proof. For this reason we present below a complete proof 
which is based on the ideas of Biane.

\begin{proof}[Proof of \cref{lem:Defect,lem:SecondDefect}]
The first part of \cref{lem:Defect} is obvious, since each cycle of a permutation gives at least one
contribution to the number of antiexceedances.

\medskip

Biane's \cite[Lemma 8.2(1)]{Biane98} in our notations takes the form 
	\begin{equation*}
		\aex (\gamma \tau^{-1})+ \aex( \tau) = d+1 
	\end{equation*}
for an arbitrary $\tau\in\Sy{d}$ while
\cite[Lemma 8.2(2)]{Biane98} implies that
\[ \aex(\sigma) - \aex(\tau) \leq | \sigma^{-1} \tau|\]
for arbitrary $\psi\in\Sy{d}$. The above two relationships can be combined by adding sideways:
\[ \aex (\gamma \tau^{-1}) + \aex(\sigma) \leq d+1 + |\sigma^{-1} \tau|.\]
By setting $\tau:={\pi_1}^{-1} \gamma$, $\sigma:={\pi_2}^{-1} \gamma$ 
we obtain our target inequality \eqref{eq:defects-nice}.

\medskip

The following is the proof of Biane \cite[page 173]{Biane98} but corrected for typos:
\begin{align*}
 \aex (\gamma \epsilon \tau^{-1}) 
&=   d+1 - \aex(\tau \epsilon^{-1}) 
    & \text{by \cite[Lemma 8.2(1)]{Biane98}} \\
&=   d+2 - \aex(\tau) 
    & \text{by \cite[Lemma 8.2(2)]{Biane98}} \\
&\leq  d+2 + |\sigma^{-1} \tau|- \aex(\sigma) 
    & \text{by \cite[Lemma 8.2\textbf{(2)}]{Biane98}}.
\end{align*}

By the substitutions
\[ \tau:= \pi_1^{-1} \gamma \epsilon, \qquad
   \sigma := \pi_2^{-1} \gamma \epsilon.
\]
the above inequality becomes 
\[ \aex(\pi_1) + \aex (\pi_2^{-1} \gamma \epsilon)
  \leq d+2 + |\pi_1^{-1} \pi_2|.
\]
Our target inequality \eqref{eq:defects-nice2}
is now a direct consequence.
\end{proof}

\section{BPP Matrices and Geometric Quantization}
\label{app:GeometricQuantization}

As mentioned in the Introduction, BPP matrices quantize independent
unitarily invariant random Hermitian matrices with deterministic eigenvalues.
This statement falls under the broad umbrella of \emph{geometric quantization}, 
in the sense of Kirillov and Kostant, see e.g.~\cite{KirillovOrbitMethod}.  In this section,
we give a self-contained, physically motivated treatment of this quantization, specific to 
our setting.  The Reader who is not interested in physical arguments
may skip this section entirely.

	\subsection{Toy example}
We begin by considering a toy example: 
a physical system consisting of 
a single stationary particle with an angular momentum. 
For an alternative (but related) exposition of this example see the work of 
Kuperberg \cite{Kuperberg2002}.
We will use the corresponding symmetry group $\group{Spin}(3)\cong \group{SU}(2)$
as a starting point for exploration of the 
unitary group $\group{U}(\ourn)$ and related algebraic and probabilistic objects.

The traditional way to view the angular momentum in Newtonian mechanics
is to regard it as a vector $\vec{J}=(J_x,J_y,J_z)\in\R^3$.
However, for our purposes it will be more convenient to view 
the angular momentum as a \emph{functional on the Lie algebra} of the special orthogonal 
group $\group{SO}(3)$, that is as an element of $\big(\mathfrak{so}(3) \big)^\star$.
This functional $J$ is defined as follows.
For a given $x \in \mathfrak{so}(3)$ we denote by $J(x)$ Noether's invariant corresponding 
to the one-dimensional Lie group $\R\ni t \mapsto e^{t x}\in \group{SO}(3)$ of rotations.
Since the map $x\mapsto J(x)$ is linear, it defines an element of the dual space.

From a conceptual point of view, regarding angular momentum 
as an element of $\big(\mathfrak{so}(3) \big)^\star$
is advantageous; for example it scales nicely to other choices of the dimension
of the physical space than $3$. 
Unfortunately, the mathematical vocabulary
concerning this dual space is rather limited, and hence it will be convenient to 
have a more concrete alternative available. For this reason 
in the following we shall describe the dual of $\mathfrak{so}(3)\cong \mathfrak{su}(2)$ 
in more detail.

\subsection{The dual space}
\label{sec:dual-space}

In greater generality, we are interested in the dual of the Lie algebra
$\mathfrak{su}(\ourn)$ of traceless antihermitian matrices, 
as well as the dual of the Lie algebra $\mathfrak{u}(\ourn)$ of general antihermitian matrices.

Each of these Lie algebras can be equipped with the symmetric,
non-degenerate, bilinear form
\begin{equation}
\label{eq:bilinear-form}   
 \langle x,y \rangle=\Tr x^T y.
\end{equation}
In this way $\big( \mathfrak{su}(\ourn) \big)^{\ast} \cong \mathfrak{su}(\ourn)$
and
$\big( \mathfrak{u}(\ourn) \big)^{\ast} \cong \mathfrak{u}(\ourn)$.
Thanks to these isomorphisms, it makes sense to speak about the 
\emph{eigenvalues} of elements of the dual spaces 
$\big( \mathfrak{su}(\ourn) \big)^{\ast}$
and $\big( \mathfrak{u}(\ourn) \big)^{\ast}$.

In the latter case,
this isomorphism takes the following more concrete form.
Since the complexification $\uu(\ourn) \otimes_\R \C=
\gl(\ourn) = \Mat_N(\C)$ has a matrix structure, it follows that
$\uu(\ourn)\gwia \otimes_\R \C\cong \uu(\ourn) \otimes_\R \C = \Mat_N(\C)$
can be also identified with matrices. 
More specifically, a functional
$x\in \uu(\ourn)\gwia \otimes_\R \C$ corresponds to the matrix
\begin{equation}
\label{eq:strangematrix}
\begin{bmatrix}
x(e_{11}) & \dots  & x(e_{\ourn 1}) \\
\vdots       & \ddots & \vdots       \\
x(e_{1\ourn}) & \dots  & x(e_{\ourn\ourn}) 
\end{bmatrix} = \sum_{k,l} x(e_{kl})\ e_{kl} \in \Mat_N(\C),\
\end{equation}
where $e_{kl}\in \Mat_N(\C) = \uu(\ourn) \otimes_\R \C$ are the standard matrix units.
Indeed, the above matrix defines via \eqref{eq:bilinear-form} a functional which on a matrix unit $e_{ij}$ 
takes the same value as the functional $x$.

Note the subtlety in the formulation of \eqref{eq:strangematrix}:
since $e_{kl}$ is not an antihermitian matrix, 
for $x\in\uu(\ourn)\gwia$ the quantity $x(e_{kl})$
might be not well defined.
Nevertheless, $x(e_{kl})$ may be defined thanks to the observation that
$e_{kl}\in \Mat_N(\C)= \uu(\ourn) \otimes_\R \C$ belongs to the 
\emph{complexification of antihermitian matrices}, thus
we may extend the domain of $x$ by linearity as follows:
\[ x(e_{kl})= x\left( i \frac{e_{kl}+e_{lk}}{2i} + \frac{e_{kl}-e_{lk}}{2} \right):=
i    x\left( \frac{e_{kl}+e_{lk}}{2i}\right)+
x\left( \frac{e_{kl}-e_{lk}}{2} \right).
\]

\subsection{Back to the angular momentum}
Suppose that for some physical Newtonian system its angular momentum 
--- viewed as a vector $\vec{J}\in\R^3$ ---  
is random, with the uniform distribution on the sphere with radius $|J|$.
One can show that this corresponds to $J$ being 
\emph{a random element of the dual space
$\big( \mathfrak{su}(2) \big)^{\ast}$, uniformly random on the manifold
of antihermitian matrices with specified eigenvalues} 
$\pm i\ |J|$. 

In other words, under the isomorphism from \cref{sec:dual-space} the distribution of the
angular momentum coincides with the distribution of the random matrix
\begin{equation}
\label{eq:angular-momentum-as-a-matrix}
U \begin{bmatrix} i \ |J| &               \\
                               & -i \ |J|    \\
     \end{bmatrix}
U^{-1},
\end{equation}
where $U\in \group{SU}(2)$ is a random matrix from the special unitary group, 
distributed according to the Haar measure. 
We now describe a quantum analogue of this probability distribution.

\subsection{Angular momentum in quantum mechanics}
We consider the following quantum analogue of the Newtonian system considered above:
a quantum particle with fixed spin $j\hslash$, where 
$j\in\left\{ 0, \nicefrac{1}{2}, 1 ,\nicefrac{3}{2}, \dots \right\}$
and $\hslash$ denotes the Planck constant.
Such a particle is described by a Hilbert space $\V$,
this space being the appropriate unitary representation 
${\pi_1}\colon \group{Spin}(3)\to \group{GL}(\V)$.
The Lie group $\group{Spin}(3)\cong \group{SU}(2)$ is
the universal cover of the group $\group{SO}(3)$ describing rotations of the physical space.
To be more specific, ${\pi_1}$ is the irreducible representation
of the Lie group $\group{SU}(2)$ with the dimension $2j+1\in\{1,2,\dots\}$. 

In order to sustain the concordance with the Newtonian situation discussed above,
the angular momentum should be a functional
\[ J \colon  \mathfrak{so}(3) \to \End \V \]
which to an element of the Lie algebra 
$x\in\mathfrak{so}(3)$
associates the infinitesimal \emph{Hermitian} generator of the action of the one-parameter Lie group 
$\R \ni t\mapsto e^{t x}\in\group{Spin}(3)$ on its representation $\V$, i.e.
\[ {\pi_1}\left( e^{t x} \right) = e^{-i t \frac{J(x)}{\hslash} }.
\]
The choice of normalization on the right hand side comes from the notations used
in quantum mechanics.
Clearly, this means that (up to a scalar multiple) the angular momentum
\[ -i\frac{J}{\hslash} ={\pi_1} \colon  \mathfrak{so}(3) \to \End\V \]
is a representation of the Lie algebra $\mathfrak{so}(3)=\mathfrak{su}(2)$.
If $\End \V$ is viewed as an algebra of noncommutative random variables,
\begin{equation*}
  -i\frac{J}{\hslash} = {\pi_1} \in \big( \mathfrak{so}(3) \big)^{\ast} \otimes  \End \V  
\end{equation*}
becomes a \emph{quantum random element of the dual space 
$\big( \mathfrak{so}(3) \big)^{\ast}=\big( \mathfrak{su}(2) \big)^{\ast}$}.

Just as before we assume that we have no further information about the particle; 
in other words, the quantum system is in the maximally mixed state 
and thus the algebra $\End \V$ of noncommutative random variables 
is equipped with the state $\tr_\V$.
Just as before, it is convenient to have a concrete matrix representation 
from \cref{sec:dual-space}
for the elements of the dual space $\big( \mathfrak{su}(2) \big)^{\ast}$. 
We shall discuss this concrete representation now.

\subsection{The dual space}
Consider a slightly more general situation in which 
${\pi_1}\colon \mathfrak{u}(\ourn)\to \End\V$ is a representation of the Lie algebra
$\mathfrak{u}(\ourn)$.

Equation \eqref{eq:strangematrix} shows that $-i\frac{J}{\hslash} ={\pi_1}$ can be identified with the matrix
\begin{equation}
\label{eq:reps-as-a-matrix}
  -i\frac{J}{\hslash} = {\pi_1}= \begin{bmatrix}
{\pi_1}(e_{11}) & \dots  & {\pi_1}(e_{\ourn 1}) \\
\vdots       & \ddots & \vdots       \\
{\pi_1}(e_{1\ourn}) & \dots  & {\pi_1}(e_{\ourn\ourn}) 
\end{bmatrix}.
\end{equation}

\subsection{Conclusion}
The above considerations show that from a physicist's point of view, 
for $N=2$ the $2\times 2$ matrix \eqref{eq:reps-as-a-matrix} 
is a natural quantization
of the random matrix \eqref{eq:angular-momentum-as-a-matrix}
which describes the angular momentum in Newtonian mechanics.

It is time to detach from the physical toy example related to the group 
$\group{Spin}(3)\cong \group{SU}(2)$ and consider the general situation
treated in this article. 
The \emph{classical object} which we considered in this section was
a random element of $\big(\mathfrak{u}(\ourn)\big)^{\ast}$
(or, a random antihermitian matrix), sampled
uniformly from the elements with specified spectrum.
Its \emph{quantization} is a BPP matrix:
a quantum random
element of $\big(\mathfrak{u}(\ourn)\big)^{\ast}$
which corresponds to a specified irreducible representation of $\group{U}(\ourn)$.

\subsection{Choice of the matrix structure on $\uu(\ourn)\gwia$}
\label{subsec:matrix-structure}
Unlike in the case of the Lie algebra $\uu(\ourn)$, there is no
canonical choice of matrix structure on the dual $\uu(\ourn)\gwia$. In
\cref{sec:dual-space} this structure was
chosen based on the bilinear form $\langle A,B\rangle=\Tr A^T B$. One can argue
however, that the bilinear form $\langle A,B\rangle=\Tr A B$ would be equally
natural. 
With respect to this new convention,
the representation ${\pi_1}$ viewed as a matrix becomes
\begin{equation} 
\label{eq:reps-as-a-matrixA}
\begin{bmatrix}
{\pi_1}(e_{11}) & \dots  & {\pi_1}(e_{1 \ourn}) \\
\vdots       & \ddots & \vdots       \\
{\pi_1}(e_{\ourn 1}) & \dots  & {\pi_1}(e_{\ourn\ourn}) 
\end{bmatrix} \in \Mat_N(\End \V)=\Mat_N(\C)\otimes\End \V,
\end{equation}
which 
is \emph{not} a BPP matrix.
The matrices \eqref{eq:reps-as-a-matrix} and \eqref{eq:reps-as-a-matrixA} differ
only by transposition with respect to the first factor of the tensor product
$\Mat_N(\C)\otimes\End \V$, an operation known as 
\emph{partial transposition}.
The minor advantage of the notation \eqref{eq:reps-as-a-matrix} is that it
coincides with the notation of \v{Z}elobenko \cite{Zelobenko73} 
who calculated the spectral measure of BPP matrices.

There are, however, no serious advantages of one notation over the other,
since the calculation of the spectral measure of \eqref{eq:reps-as-a-matrixA} can be
done by the analogous methods to those of \v{Z}elobenko \cite{Zelobenko73}.
The only difference is that instead of considering the tensor product with the
canonical representation, one should consider the tensor product with the
contragradient one.

\subsection{Another erratum to \cite{Biane98}}
The existence of two natural choices for the matrix associated to a representation, the 
BPP matrix \eqref{eq:reps-as-a-matrix} and its partial transpose \eqref{eq:reps-as-a-matrixA}, has 
been a source of confusion in the literature,
in particular in the work of Biane \cite{Biane98}. We shall review and clarify this issue here.

On page 163 of \cite{Biane98}, Biane defines an operator $X$ which in our notation
corresponds to \eqref{eq:reps-as-a-matrixA}, the \emph{partial transpose} of BPP operator.  
This is beneficial for his purposes because the spectrum of such transpose BPP matrix is 
asymptotically related to the \emph{transition measure} of the 
Young diagram \cite[Proposition 7.2]{Biane98}. 

The operator $X$ seems to disappear in \cite[Section 8]{Biane98}, 
at which point the closely related \emph{Casimir operators} 
\[ C_\sigma=\sum_{i_1,\dots,i_q} \zeta_{i_1 i_{\sigma(1)}} \cdots  \zeta_{i_q i_{\sigma(q)}}.\]
are introduced and studied. Informally speaking, this Casimir operator is obtained by
selecting certain entries from the matrix we are interested in, 
according to some equalities between the indices.
From the viewpoint of Biane's goals, this choice of the definition of 
Casimir operators is somewhat surprising
because it refers to selecting entries from the BPP matrix \eqref{eq:reps-as-a-matrix}.
With the operator $X$ in mind, it would make more sense to consider the Casimir operators defined by
\[ \widetilde{C}_\sigma=\sum_{i_1,\dots,i_q} \zeta_{i_{\sigma(1)} i_1} \cdots  \zeta_{i_{\sigma(q)} i_q}.\]

And indeed: in \cite[Section 8]{Biane98} a special role is played by Casimir operators 
in the special case when $\sigma=(1,2,\dots,q)$ 
is the full \emph{forward} cycle,
in the which case $C_\sigma=C_q$ corresponds to the partial trace
of the $q$-th power of BPP matrix. However, 
for Biane's purposes it would be more convenient to pay special attention
to the case when $\sigma'=(q,q-1,\dots,2,1)$ is the full \emph{backward} cycle, since
$C_{\sigma'}$ corresponds to the trace of Biane's operator $X$.
An even better solution would be to consider the modified Casimir operator
$\widetilde{C}_{\sigma}$ for $\sigma=(1,2,\dots,q)$ being the full \emph{forward} cycle.
 
The operator $X$ resurfaces in \cite[Section 9.2]{Biane98} where the inevitable notation 
clash occurs on page 172, in which the third displayed equation would be true if Biane's Casimir 
operators were replaced by our modified Casimir operators $\widetilde{C}$.
\end{document}